\documentclass[11pt]{amsart}


\def\pr{\partial}



\newcommand{\ddt}{\frac{\rm d}{{\rm d} t} }




%
  



\if {

}{{\caption}}
} \fi




\def\fh{\hat{f}}

\def\ph{\hat{p}}

\def\uh{\hat{u}}
\def\vh{\hat{v}}
\def\wh{\hat{w}}
\def\xh{\hat{x}}
\def\yh{\hat{y}}




\def\hb{\bar{h}}

\def\pb{\bar{p}}

\def\ub{\bar{u}}
\def\vb{\bar{v}}
\def\wb{\bar{w}}
\def\xb{\bar{x}}
\def\yb{\bar{y}}



\def\xib{\bar{\xi}}



  



\def\C{\mathcal{C}}

\def\F{\mathcal{F}}
\def\G{\mathcal{G}}
\def\H{\mathcal{H}}

\def\J{\mathcal{J}}

\def\L{\mathcal{L}}

\def\P{\mathcal{P}}
\def\Q{\mathcal{Q}}
\def\R{\mathcal{R}}

\def\U{\mathcal{U}}
\def\V{\mathcal{V}}
\def\W{\mathcal{W}}
\def\X{\mathcal{X}}
\def\Y{\mathcal{Y}}



%

\def\eps{\varepsilon}



\def\half{\mbox{$\frac{1}{2}$}}

\def\1B{{\bf  1}}

\newcommand{\cR}{\mathbb{R}}

\newcommand\be{\begin{equation}}
\newcommand\ee{\end{equation}}
\newcommand{\benl}{\begin{equation*}}
\newcommand{\eenl}{\end{equation*}}
\newcommand\ba{\begin{array}}
\newcommand\ea{\end{array}}
\newcommand{\bean}{\begin{eqnarray*}}
\newcommand{\eean}{\end{eqnarray*}}


\newcommand{\bs}{\bigskip}



\def\ds{\displaystyle}

\def\la{\langle}
\def\ra{\rangle}

\newcommand{\tras}{^\top}
\newcommand{\intT}{\int_0^T }


\newcommand{\dtt}{\mathrm{d}t}
\newcommand{\mr}{\mathrm}

\usepackage{amsmath,amssymb,latexsym,amsthm,mathdots,yhmath}



\newtheorem{theorem}{Theorem}[section]
\newtheorem{lemma}[theorem]{Lemma}
\newtheorem{proposition}[theorem]{Proposition}
\newtheorem{corollary}[theorem]{Corollary}

\theoremstyle{remark}
{
    \newtheorem{definition}[theorem]{Definition}
    
    \newtheorem{remark}[theorem]{Remark}

\newtheorem{assumption}[theorem]{Assumption}
}

\DeclareMathAlphabet{\mathpzc}{OT1}{pzc}{m}{it}

\newcommand{\gr}{>}
\newcommand{\mi}{<}
\newcommand{\shoot}{\mathcal{S}}
\newcommand{\lam}{[\lambda]}
\newcommand{\lamh}{[\hat\lambda]}
\newcommand{\lamLQ}{[\lambda^{LQ}]}



\keywords{optimal control, singular control, second order optimality condition, shooting algorithm, Gauss-Newton method}

\hyphenation{go-ver-ned}

\begin{document}

\title[Singular solutions in optimal control]{
Singular solutions in optimal control: \\ second order conditions  and  
\\ a shooting algorithm$^{1}$}

\author[M.S. Aronna]{M. Soledad Aronna}
\address{M.S. Aronna\\ Fellowship within the ITN Marie Curie Network SADCO at Universit\`a degli Studi di Padova, Italy}
\email{aronna@math.unipd.it}


\maketitle

\footnotetext[1]{This work is supported by the European Union under the 7th Framework Programme «FP7-PEOPLE-2010-ITN»  Grant agreement number 264735-SADCO.}

\begin{abstract} 
In this article we study optimal control problems for systems that are affine in one part of the control variable. Finitely many equality and inequality constraints on the initial and final values of the state are considered.
We investigate singular optimal solutions for this class of problems. First, we obtain second order necessary and sufficient conditions for weak optimality. 
Afterwards, we propose a shooting algorithm and show that the sufficient condition above-mentioned is also sufficient for the local quadratic convergence of the algorithm.
\end{abstract}

\section{Introduction}\label{Introduction}

The purpose of this paper is to investigate optimal control problems governed by systems of ordinary differential equations that we write as 
\benl
\dot{x}_t=\sum_{i=0}^m v_{i,t} f_i(x_t,u_t),\quad  {\rm  a.e.}\ {\rm  on}\ [0,T].
\eenl
Here $x:[0,T]\to \cR^n$ is the state variable,  $v_0\equiv 1$ is a constant, $v_i:[0,T]\to \cR$ are the affine controls for $i=1,\dots m,$  $u:[0,T]\to \cR^l$ is the nonlinear control and $f_i:\cR^{n+l}\to \cR^n$ is a vector field for each $i=0,\dots m.$ 
Note that this kind of system includes both the totally affine case when $l=0$ and the nonlinear case whenever $m=0.$

Many models that enter into this framework can be found in practice and, in particular, in the existing literature. Among these we can mention:  the Goddard's problem  \cite{Goddard} in three dimensions analyzed in Bonnans et al. \cite{BLMT09}, several models concerning the motion of a rocket treated in Lawden \cite{Law63}, Bell and Jacobson \cite{BelJac}, Goh \cite{GohThesis,Goh08}, Oberle \cite{Obe77}, Azimov \cite{Azi05} and Hull \cite{Hul11}; an hydrothermal electricity production problem studied in Bortolossi et al. \cite{BPT02} and Aronna et al. \cite{ABL08}, the problem of atmospheric flight considered by Oberle in \cite{Obe90}, and an optimal production process in Cho et al. \cite{ChoAbaPar93} and Maurer at al. \cite{MauKimVos05}. All the systems investigated in the cited articles are partially affine in the sense that they have at least one affine and one nonlinear control, i.e. $m,l>0.$

The subject of second order optimality conditions for these partially affine problems have been studied by Goh in \cite{Goh66a,Goh67,GohThesis,Goh08}, Dmitruk in \cite{Dmi11}, Dmitruk and Shishov in \cite{DmiShi10}, Bernstein and Zeidan \cite{BerZei90},  and Maurer and Osmolovskii \cite{MauOsm09}.
The first works were by Goh, who introduced a change of variables in \cite{Goh66a} and used it to obtain optimality conditions in \cite{Goh66a,Goh66,GohThesis}, always assuming uniqueness of the multiplier. The necessary conditions we present imply those by Goh \cite{Goh66}, when there is only one multiplier.
Recently, Dmitruk and Shishov \cite{DmiShi10} analysed the  quadratic functional associated with the second variation of the Lagrangian function and provided a set of necessary conditions for the nonnegativity of this quadratic functional. Their results are consequence of a second order necessary condition that we present.
In  \cite{Dmi11}, Dmitruk proposed, without proof, necessary and sufficient conditions for a problem having a particular structure: the affine control variable applies to a term depending only on the state variable, i.e. the affine and nonlinear controls are  `uncoupled'. This hypothesis is not used in our work.
The conditions established here coincide with those suggested in Dmitruk \cite{Dmi11}, when the latter are applicable.
In \cite{BerZei90}, Bernstein and Zeidan derived a Riccati equation for the singular linear-quadratic regulator, which is a modification of the classical linear-quadratic regulator where only some components of the control enter quadratically in the cost function.
All of these articles use Goh's Transformation to derive their conditions; we use this transformation as well.
On the other hand, in \cite{MauOsm09}, Maurer and Osmolovskii gave a sufficient condition for a class of problems having one affine control 
subject to bounds and such that it is bang-bang at the optimal solution. This structure is not studied here since no control constraints are considered, i.e. our optimal control is suppose to be totally singular.

Regarding second order optimality conditions, we provide a pair of necessary and sufficient conditions for weak optimality of totally singular solutions. These conditions are `no gap' in the sense that  the sufficient condition is obtained from the necessary one by strengthening an inequality. We do not assume uniqueness of multiplier. When our second order conditions are applied to the particular cases having either $m=0$ or $l=0,$ they give already existing results as we point out along the article.

Among the applications of the shooting method to the numerical solution of partially affine problems we can mention the articles Oberle \cite{OberleThesis,Obe90} and Oberle-Taubert \cite{ObeTau97}. In these articles the authors use a generalization of the algorithm that Maurer \cite{Mau76} suggested for totally affine systems.
These works present interesting implementations of a shooting-like method to solve partially affine control problems having bang-singular or bang-bang solutions and, in some cases, running-state constraints are considered. No result on convergence is given in these articles. 

In this paper we propose a shooting algorithm which can be also used to solve problems with bounds on the controls. Our algorithm is an extension of the method for totally affine problems presented in Aronna et al. \cite{ABM11}.
We give a theoretical support to this method by showing that the second order sufficient condition above-mentioned ensures the local quadratic convergence of the algorithm.

The article is organised as follows. In Section \ref{SectionPb} we present the problem, the basic definitions and first order optimality conditions.
In Section \ref{SectionSOC} we give the tools for second order analysis and establish a second order necessary condition. We introduce Goh's Transformation in Section \ref{GohT}.
In Section \ref{SectionNC} we show a new second order necessary condition, and in Section \ref{SectionSC} we give a sufficient one.
A shooting algorithm is proposed in Section \ref{SectionShoot}, and in Section \ref{SectionWP} we prove that the sufficient condition above-mentioned guarantees the local quadratic convergence of the algorithm.

\vspace{8pt}

\noindent\textbf{Notations.}
We denote by $h_t$ the value of function $h$ at time $t$ if $h$ is a function that depends only on $t,$ and by $h_{i,t},$ the $i$th. component of $h$ evaluated at $t.$
Partial derivatives of a function $h$ of $(t,x)$ are referred as $D_th$ or $\dot{h}$ for the derivative in time, and as $D_xh$ or $h_x$ for the differentiations with respect to space variables. The same convention  is extended to higher order derivatives.
By $\cR^k$ we denote the $k-$dimensional real space, i.e. the space of column real vectors of dimension $k;$ and by  $\cR^{k,*}$ its corresponding dual space, which consists of $k-$dimensional row real vectors.
By $L^p(0,T;\cR^k)$ we mean the Lebesgue space with domain equal to the interval $[0,T]\subset \cR$ and with values in $\cR^k.$ The notation $W^{q,s}(0,T;\cR^k)$ refers to the Sobolev spaces (see Adams \cite{Ada75} for further details on Sobolev spaces).

\section{Statement of the problem and assumptions}\label{SectionPb}

\subsection{Statement of the problem.} 
We study the optimal control
problem (P) given by
\begin{align}
&\label{cost} J:=\varphi_0(x_0,x_T)\rightarrow
\min,\\
&\label{stateeq}\dot{x}_t=F(x_t,u_t,v_t),\quad {\rm  a.e.}\ {\rm  on}\ [0,T],\\
& \label{finaleq} \eta_j(x_0,x_T)=0,\quad
\mathrm{for}\
j=1\hdots,d_{\eta},\\
&\label{finalineq} \varphi_i(x_0,x_T)\leq 0,\quad
\mathrm{for}\
i=1,\hdots,d_{\varphi},
\end{align}
where the function $F:\cR^{n+l+m}\to\cR^n$ can be written as
\benl
F(x,u,v):=\sum_{i=0}^m v_{i} f_i(x,u).
\eenl
Here  $f_i:\cR^{n+l}\rightarrow
\cR^n$ for $i=0,\hdots,m,$ 
$\varphi_i:\cR^{2n}\rightarrow \cR$ for
$i=0,\hdots,d_{\varphi},$
$\eta_j:\cR^{2n}\rightarrow \cR$ for
$j=1,\hdots,d_{\eta}$ and we put $v_0\equiv 1,$ which is not a variable. 
The \textit{nonlinear control} $u$ belongs to $\U:=L^{\infty}(0,T;\cR^l),$ while by 
$\V:=L^{\infty}(0,T;\mathbb{R}^m)$ we denote the space of {\it affine controls} $v,$ and 
$\mathcal{X}:=W^{1,\infty}(0,T;\mathbb{R}^n)$ refers to the state space.
 When needed, we write $w=(x,u,v)$ for a point in
$\mathcal{W}:=\mathcal{X}\times \U \times \V.$ 
The hypothesis below is considered along all the article.

\begin{assumption} 
\label{regular}
All data functions have Lipschitz-continuous second derivatives.
\end{assumption}

A \textit{trajectory} is an element $w\in\mathcal{W}$
that satisfies the state equation \eqref{stateeq}.
If in addition, constraints \eqref{finaleq} and
\eqref{finalineq}
hold, we say that $w$ is a \textit{feasible trajectory}
of problem (P). 

\begin{definition}
A feasible trajectory $\wh=(\xh,\uh,\vh)\in\W$ is
 a {\em weak minimum} of (P) if
there exists $\eps>0$ such that the cost function
attains at $\wh$ its minimum in the set of feasible
trajectories $w=(x,u,v)$ satisfying
\benl
\|x-{\xh}\|_{\infty}<\varepsilon,\quad
\|u-\uh\|_{\infty}<\varepsilon,\quad 
\|v-\vh\|_{\infty}\mi \eps.
\eenl
\end{definition}

In the sequel, we study a nominal feasible
trajectory $\wh=(\xh,\uh,\vh)\in \W.$
An element $\delta w\in \W$ is termed \textit{feasible variation for $\wh,$} if $\wh+\delta
w$ is feasible for (P). 
Let $\lambda=(\alpha,\beta,p)$ be in the space
$\cR^{d_{\varphi}+1,*}\times \cR^{d_{\eta},*}\times
W^{1,\infty}(0,T;\cR^{n,*}).$
Define the \textit{pre-Hamiltonian} function
\benl
H[\lambda](x,u,v,t):=p_t\sum_{i=0}^m v_i
f_i(x,u),
\eenl
the \textit{endpoint Lagrangian} function $\ell:\cR^{2n}\rightarrow \cR$ by
\benl
\ell[\lambda](q):=\sum_{i=0}^{d_{\varphi}}
\alpha_i\varphi_i(q)+\sum_{j=1}^{d_{\eta}}\beta_j
\eta_j(q),
\eenl
and the \textit{Lagrangian} function 
\be\label{lagrangian}
\L[\lambda](w):= \ell[\lambda](x_0,x_T) 
+ \int_0^{T}
p_t\left(\sum_{i=0}^{m}v_{i,t}f_i(x_t,u_t)-\dot
x_t\right)\dtt.
\ee
We assume, in sake of simplicity, that whenever some argument of $f_i,$ $H,$ $\ell,$ $\L$ or their derivatives is omitted,  they are evaluated at $\wh.$ Without
loss of generality we suppose that
\be
\varphi_i(\xh_0,\xh_T)=0,\ \mr{for}\ \mr{all}\ 
i=1,\hdots,d_{\varphi}.
\ee


\subsection{Lagrange multipliers}
We introduce here the concept of multiplier. The second order conditions that we prove in this article are expressed in terms of the second variation of the Lagrangian function in \eqref{lagrangian} and the {set of Lagrange multipliers} associated with $\wh$ that we define below.

\begin{definition}
\label{DefMul}
An element $\lambda=(\alpha,\beta,p)\in
\cR^{d_{\varphi}+1,*}\times \cR^{d_{\eta},*}\times
W^{1,\infty}(0,T;\cR^{n,*})$ is a {\em Lagrange multiplier}
associated with $\wh$ if it satisfies the following conditions.
\begin{align}
\label{nontriv}&|\alpha|+|\beta|=1,\\
&\label{alphapos}\alpha=(\alpha_0,\alpha_1,\hdots,\alpha_{d_{\varphi}})\geq0,
\end{align}
the function $p$ is solution of the \textit{costate
equation} 
\be
\label{costateeq}
-\dot{p}_t=H_x[\lambda](\xh_t,\uh_t,\vh_t,t),
\ee
and it satisfies the \textit{transversality conditions}
\be
\label{transvcond}
\begin{split}
p_0&=-D_{x_0}\ell\lam(\xh_0,\xh_T),\\
p_T&=D_{x_T}\ell\lam(\xh_0,\xh_T),
\end{split}
\ee
and the \textit{stationarity conditions} 
\be
\label{stationarity}
\left\{
\ba{l}
\vspace{3pt} \ds  H_u\lam(\xh(t),\uh(t),\vh(t),t)=0,\\
H_v\lam(\xh(t),\uh(t),\vh(t),t)=0,
\ea
\right.
\quad {\rm a.e.}\ {\rm on}\ [0,T],
\ee
hold true. Denote by $\Lambda$ the {\em set of Lagrange multipliers} associated with $\wh.$
\end{definition}

Recall the following well-known result. 
\begin{theorem}
\label{LambdaCompact}
If $\wh$ is a weak minimum for (P), then the set $\Lambda$ is non empty and compact.
\end{theorem}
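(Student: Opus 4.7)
The statement is standard, so the goal is to set up a clean abstract-Lagrange-multiplier argument and then exploit the linearity of the adjoint system to get compactness for free.

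\textbf{Non-emptiness.} I would reformulate (P) as an abstract optimization problem in the Banach space $\W = \X\times\U\times\V$ of the form: minimize $\varphi_0(x_0,x_T)$ subject to the equality map
\[
G(w):=\bigl(\dot x_\cdot - F(x_\cdot,u_\cdot,v_\cdot),\,\eta(x_0,x_T)\bigr)\in L^\infty(0,T;\cR^n)\times \cR^{d_\eta}
\]
vanishing, together with the finitely many inequality constraints $\varphi_i(x_0,x_T)\le 0$. Assumption \ref{regular} yields $C^1$-regularity of the data, so $G$ is a $C^1$ map between Banach spaces, and weak minimality of $\wh$ is precisely a local minimum in the $L^\infty$-topology on $(u,v)$ (and the resulting $W^{1,\infty}$-topology on $x$). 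I would then invoke the generalized Karush–Kuhn–Tucker/Lagrange multiplier rule (in the Dubovitskii–Milyutin or Ioffe–Tikhomirov form) for Banach space problems with finitely many equality and inequality constraints: there exist a nonnegative scalar $\alpha_0$, nonnegative scalars $\alpha_i$ with $\alpha_i\varphi_i(\xh_0,\xh_T)=0$ (automatically satisfied since we normalized $\varphi_i(\xh_0,\xh_T)=0$), multipliers $\beta\in\cR^{d_\eta,*}$ and an element of the topological dual of $L^\infty(0,T;\cR^n)$, not all zero, annihilating the derivative of the Lagrangian. Identifying this dual element with an absolutely continuous function $p$ (standard since the constraint $\dot x - F = 0$ has an $L^\infty$ right-hand-side) and writing out the Fréchet derivative in $(x,u,v)$ gives exactly the costate equation \eqref{costateeq}, the transversality conditions \eqref{transvcond} and the stationarity relations $H_u=H_v=0$ of \eqref{stationarity}. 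Rescaling by $|\alpha|+|\beta|$ (which is strictly positive thanks to the nontriviality clause of the multiplier rule and the fact that, by Gronwall, $(\alpha,\beta)=0$ would force $p\equiv 0$, hence the whole multiplier vanishes) yields the normalization \eqref{nontriv} and produces an element of $\Lambda$.

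\textbf{Compactness.} Here I would exploit that a multiplier is entirely determined by its finite-dimensional part $(\alpha,\beta)$. Indeed, for fixed $(\alpha,\beta)\in\cR^{d_\varphi+1}\times\cR^{d_\eta}$, the transversality condition at $t=T$ fixes $p_T$, and then the linear adjoint ODE \eqref{costateeq} has a unique $W^{1,\infty}$ solution $p=p[\alpha,\beta]$; moreover the map $(\alpha,\beta)\mapsto p[\alpha,\beta]$ is continuous (even linear bounded) from $\cR^{d_\varphi+1+d_\eta}$ into $W^{1,\infty}(0,T;\cR^{n,*})$. I would therefore consider the continuous map
\[
\Phi:(\alpha,\beta)\longmapsto(\alpha,\beta,p[\alpha,\beta])
\]
and identify $\Lambda$ with the preimage, inside the compact set
\[
K:=\{(\alpha,\beta)\in\cR^{d_\varphi+1}\times\cR^{d_\eta}:\ \alpha\ge 0,\ |\alpha|+|\beta|=1,\ p_0[\alpha,\beta]=-D_{x_0}\ell[\alpha,\beta](\xh_0,\xh_T)\},
\]
of the closed condition that $H_u=H_v=0$ almost everywhere. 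Closedness of this last condition under $L^\infty$-convergence of $p$ follows from the fact that, for every fixed $t$ (and by dominated convergence, a.e.\ $t$), the maps $(\alpha,\beta)\mapsto H_u[\lambda](\wh,t)$ and $(\alpha,\beta)\mapsto H_v[\lambda](\wh,t)$ are continuous. Hence $\Lambda\cong \Phi^{-1}(\Lambda)$ is a closed subset of the compact set $K\subset\cR^{d_\varphi+1+d_\eta}$, so $\Lambda$ is compact.

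\textbf{Main obstacle.} The only delicate point is verifying the hypotheses of the abstract multiplier rule, in particular extracting an absolutely continuous costate out of the a priori $L^\infty$-dual multiplier associated with the ODE equality constraint. This is standard (it amounts to integration by parts applied to the linearized state equation), but care is needed because one must avoid imposing any surjectivity (Robinson-type) qualification on $G$: the nontrivial normalization $|\alpha|+|\beta|=1$ is what compensates, allowing $\alpha_0$ to vanish in the abnormal case. Everything else is a routine identification of dual objects and a direct continuity/closedness argument.
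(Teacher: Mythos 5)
Your proposal is correct and follows essentially the same route as the paper: existence via the abstract Lagrange multiplier rule for Banach-space problems with finitely many endpoint constraints (the paper simply cites Alekseev--Tikhomirov--Fomin, Kurcyusz--Zowe and Milyutin--Osmolovskii for this), and compactness by observing that $p$ is a continuous linear function of $(\alpha,\beta)$, so that $\Lambda$ is identified with a closed subset of the normalized finite-dimensional set. Your explicit verification that the defining conditions of $\Lambda$ are closed under convergence of $(\alpha,\beta)$ is a detail the paper leaves implicit, but it is the same argument.
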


\begin{proof}
Regarding the existence of a Lagrange multiplier the reader is referred to \cite{AleTikFom79,KurZow} or Milyutin-Osmolovskii \cite[Thm. 2.1]{MilOsm98}. 
In order to prove the compactness, observe that
$p$ may be expressed as a linear continuous
mapping of $(\alpha,\beta).$ Thus, since the
normalization \eqref{nontriv} holds, $\Lambda$ is a 
finite-dimensional compact set.
\end{proof}

In view of the previous result, note that  $\Lambda$ can be
identified with a compact subset of $\cR^s,$ where
$s:=d_{\varphi}+d_{\eta}+1.$

The main results of this article are stated on a restricted subset  of multipliers for which the matrix $D^2_{(u,v)^2} H\lam (\wh,t)$ is singular and, consequently, the pairs $(\wh,\lambda)$ result  to be  singular extremals. We point out this fact in more detail in Remark \ref{RemSing}.

Given $(\xb_0,\ub,\vb)\in \cR^n\times \U\times \V,$ consider 
the \textit{linearized state equation}
\begin{align} 
\label{lineareq} 
\dot{\xb}_t &= F_{x,t}\,\xb_t + F_{u,t}\,\ub_t + F_{v,t}\,\vb_t,\quad {\rm a.e.}\  {\rm on}\ [0,T],\\
\label{lineareq0}
\xb(0) &= \xb_0.
\end{align}
The solution $\xb$ of \eqref{lineareq}-\eqref{lineareq0} is called
\textit{linearized state variable.}

For the interest of the reader, we explicit the expression of the matrices involved in \eqref{lineareq}. For each $t\in [0,T],$ the quantity $F_{x,t}$ is an $n\times n-$matrix given by $\sum_{i=0}^m \vh_{i,t} \frac{\pr f_i}{\pr x}(\xh_t,\uh_t),$ $F_{u,t}$ is $n\times l$ and is equal to $\sum_{i=0}^m \vh_{i,t} \frac{\pr f_i}{\pr u}(\xh_t,\uh_t)$ and, finally, $F_{v,t}$ is an $n\times m-$matrix whose $i$th. column is given by $f_i(\xh_t,\uh_t),$ for $i=1,\dots,m.$


\subsection{Critical cones}\label{ParCritical}
We define now the sets of critical directions associated with $\wh,$ both in the $L^{\infty}-$ and the $L^2-$ norms. Even if we are working with control variables in $L^{\infty}$ and hence the control perturbations are naturally taken in $L^{\infty},$ the second order analysis involves quadratic mappings and it is useful to extend them continuously to $L^2.$ 

Set
$\X_2:=W^{1,2}(0,T;\cR^n),$ $\U_2:=L^2(0,T;\cR^l)$ and
$\V_2:=L^2(0,T;\cR^m).$ Put $\W_2:=\X_2\times
\U_2\times \V_2$ for the corresponding product
space. 
Given $\wb\in\W_2$ satisfying \eqref{lineareq}-\eqref{lineareq0}, consider
the \textit{linearization of the endpoint constraints and cost function,}
\begin{gather}
\label{linearconseq}
D\eta_j(\xh_0,\xh_T)(\xb_0,\xb_T)=0,\quad {\rm for}\  j=1,\hdots,d_{\eta},
\\
\label{linearconsineq}
 D\varphi_i(\xh_0,\xh_T)(\xb_0,\xb_T)\leq 0,\quad {\rm for}\ i=0,\hdots,d_{\varphi}.
\end{gather}
Define the \textit{critical cones} in $\W_2$ and $\W$ by
\begin{gather}
\label{C2}\C_2:=\{\wb\in\W_2:\text{\eqref{lineareq}-\eqref{lineareq0}}\ \text{and}\ \text{\eqref{linearconseq}-\eqref{linearconsineq}}\ \text{hold}\},\\
\label{C} \C:= \C_2 \cap \W.
\end{gather}
\begin{lemma}
\label{conedense}
The critical cone $\C$ is dense on $\C_2.$
\end{lemma}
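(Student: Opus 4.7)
The plan is to approximate any $\bar{w} = (\bar{x},\bar{u},\bar{v}) \in \C_2$ in $\W_2$-norm by a sequence in $\C$, in two stages: first, truncate the controls to obtain an $L^\infty$ approximation satisfying the linearized state equation; second, apply a small finite-dimensional correction to restore the linearized endpoint constraints.

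For the first stage, I would set $\tilde{u}^N := \bar{u}\,\mathbf{1}_{\{|\bar{u}|\le N\}}$ and $\tilde{v}^N := \bar{v}\,\mathbf{1}_{\{|\bar{v}|\le N\}}$. These belong to $\U\times\V$ and converge to $(\bar{u},\bar{v})$ in $L^2$ by dominated convergence. Letting $\tilde{x}^N$ solve the linearized state equation \eqref{lineareq}--\eqref{lineareq0} with initial datum $\bar{x}_0$ and controls $(\tilde{u}^N,\tilde{v}^N)$, Gr\"onwall's lemma (the coefficients of \eqref{lineareq} are in $L^\infty$) gives $\tilde{x}^N \to \bar{x}$ in $\X_2$, with $\tilde{x}^N \in \X$. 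Hence $\tilde{w}^N := (\tilde{x}^N,\tilde{u}^N,\tilde{v}^N) \in \W$ and $\tilde{w}^N \to \bar{w}$ in $\W_2$.

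Next, the preliminary $\tilde{w}^N$ need not lie in $\C$: the residuals
$$r^N_j := D\eta_j(\xh_0,\xh_T)(\bar{x}_0,\tilde{x}^N_T), \qquad s^N_i := D\varphi_i(\xh_0,\xh_T)(\bar{x}_0,\tilde{x}^N_T)$$
only satisfy $r^N_j \to 0$ and $s^N_i \to D\varphi_i(\xh_0,\xh_T)(\bar{x}_0,\bar{x}_T) \le 0$. To correct this I would fix, once and for all, finitely many auxiliary directions $(\xi^k_0, u^k, v^k) \in \cR^n \times \U \times \V$ whose associated linearized endpoints $(\xi^k_0,\xi^k_T)$ span, through the functionals $D\eta_j$ and $D\varphi_i$, the image in $\cR^{d_\eta+d_\varphi+1}$ of the endpoint map on $\W_2$. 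This is possible because $\U\times\V$ is dense in $\U_2\times\V_2$ and the image is finite-dimensional, so the ranges of the endpoint map on $\W$ and on $\W_2$ coincide. Solving a finite linear system then yields coefficients $a^N_k \to 0$ such that $\bar{w}^N := \tilde{w}^N + \sum_k a^N_k(\xi^k, u^k, v^k)$ satisfies \eqref{linearconseq} exactly; inactive inequalities stay strictly negative for $N$ large, and active ones are preserved by choosing the correction directions inside the intersection of the corresponding closed half-spaces.

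The hard part will be this second stage: the auxiliary vectors must simultaneously span enough of the endpoint image to cancel every equality residual \emph{and} lie on the correct side of each active inequality. This is a finite-dimensional compatibility check on the linearized endpoint map, routine under a Slater-type qualification inherited from the problem data. Once this is in place, $\bar{w}^N \in \C$ and $\bar{w}^N \to \bar{w}$ in $\W_2$ follows from $a^N_k \to 0$ together with the first stage, completing the density argument.
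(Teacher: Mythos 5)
Your first stage is sound and is in fact exactly the ingredient the paper needs: it shows that the linear space $Y$ of linearized trajectories with $L^\infty$ controls is dense in the space $X$ of linearized trajectories with $L^2$ controls. The paper then simply feeds this into Lemma \ref{lemmadense} (Dmitruk's lemma on density of cones), with $Z=\C_2$ the finite-faced cone cut out of $X$ by \eqref{linearconseq}--\eqref{linearconsineq}, and is done. Your second stage is an attempt to reprove that lemma by hand, and this is where the gap lies.

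The problem is that you invoke ``a Slater-type qualification inherited from the problem data,'' but no constraint qualification is assumed anywhere near Lemma \ref{conedense} (the qualification Assumption \ref{cq} appears only much later, for the shooting algorithm, and Theorem \ref{classicalNC} is explicitly proved in both the qualified and non-qualified cases). In the degenerate situation --- e.g.\ when some nonnegative, nontrivial combination of the functionals $D\varphi_i(\xh_0,\xh_T)(\cdot)$ and $D\eta_j(\xh_0,\xh_T)(\cdot)$ vanishes identically on the linearized system --- there is no direction that keeps all equality residuals at zero while strictly decreasing every active inequality, so your finite-dimensional correction cannot be carried out as described: the correction needed to cancel the equality residuals may unavoidably push an active inequality to the wrong side. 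Dmitruk's lemma handles precisely this case by a reduction argument: if no Slater point exists for the faces of the cone, Dubovitskii--Milyutin forces one of the inequalities to hold as an equality on all of $Z$, so it can be converted into an equality constraint (which is handled by a convexity argument needing no qualification) and one concludes by induction on the number of inequalities. Either supply that reduction yourself, or --- as the paper does --- cite the lemma on density of cones and reduce your argument to its hypotheses, which your first stage already verifies.
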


In order to prove previous lemma, recall the
following technical result (see e.g. Dmitruk
\cite[Lemma 1]{Dmi08} for a proof). 

\begin{lemma}[on density of cones]
\label{lemmadense}
Consider a locally convex topological space $X,$ a
finite-faced cone $Z\subset X,$ and a linear
space $Y$ dense in $X.$ Then the cone $Z\cap Y$ is dense in $Z.$
\end{lemma}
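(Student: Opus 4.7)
The plan is to reduce $Z$ to a finite-dimensional situation by quotienting by the common kernel of the defining functionals, and then to lift the approximation back to $X$ through a continuous linear projection that lands inside $Y$. Write
\benl
Z = \{x\in X : \ell_i(x)=0,\ i=1,\dots,k;\ \ell_j(x)\le 0,\ j=k+1,\dots,k+s\},
\eenl
where $\ell_1,\dots,\ell_{k+s}\in X^*$ are the defining continuous functionals, assemble them into $L:X\to\cR^{k+s}$ by $L(x):=(\ell_1(x),\dots,\ell_{k+s}(x))$, and put $N:=\ker L$, so that $Z=L^{-1}(K)$ with $K:=\{0\}^k\times(-\infty,0]^s$. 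Since $L$ factors injectively through $X/N$ into $\cR^{k+s}$ and $N$ is closed in $X$, the quotient $X/N$ is a finite-dimensional Hausdorff topological vector space.

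The key step is to show that $\pi(Y)=X/N$ for the quotient map $\pi:X\to X/N$: indeed $\pi(Y)$ is a linear subspace, dense in $X/N$ by continuity and surjectivity of $\pi$, and a dense linear subspace of a finite-dimensional Hausdorff TVS must fill the whole space. Choose $y_1,\dots,y_d\in Y$ whose classes form a basis of $X/N$, let $X_0:=\mathrm{span}(y_1,\dots,y_d)\subset Y$, and observe that $\pi|_{X_0}$ is a linear isomorphism of finite-dimensional Hausdorff TVS; hence
\benl
p:=(\pi|_{X_0})^{-1}\circ\pi:X\to X_0\subset Y
\eenl
is a continuous linear projection with kernel $N$, giving the topological splitting $X=X_0\oplus N$. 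Moreover, $Y\cap N$ is dense in $N$: given $n\in N$ and a sequence $y^{(r)}\to n$ in $Y$, the coordinates $c^{(r)}_j$ determined by $\pi(y^{(r)})=\sum_{j=1}^d c^{(r)}_j\,\pi(y_j)$ tend to $0$ in $\cR$ (since $\pi(y^{(r)})\to 0$ in the finite-dimensional space $X/N$), so $y^{(r)}-\sum_j c^{(r)}_j y_j$ lies in $Y\cap N$ and converges to $n$.

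To conclude, fix $z\in Z$ and set $z_0:=p(z)\in X_0\subset Y$. Since $z-z_0\in N=\ker L$, one has $L(z_0)=L(z)\in K$, hence $z_0\in Y\cap Z$. Writing $n:=z-z_0\in N$ and approximating it by $n^{(r)}\in Y\cap N$ as above, the sequence $z_0+n^{(r)}$ lies in $Y$, satisfies $L(z_0+n^{(r)})=L(z_0)\in K$ so belongs to $Z$, and converges to $z$. The main delicate point is the construction of the continuous projection $p$, which relies essentially on both the finite-dimensionality and Hausdorffness of $X/N$; these in turn come from the fact that $Z$ is cut out by finitely many continuous functionals, and once $p$ is in hand, the rest is a routine splitting argument.
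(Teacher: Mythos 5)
Your proof is correct, but it follows a genuinely different route from the one the paper relies on (the paper defers to Dmitruk's Lemma~1 in \cite{Dmi08}; the argument sketched there, and in the source of this paper, is an induction on the defining constraints: an equality $(p,x)=0$ is handled by picking points of $Y$ on either side of the hyperplane inside a convex neighbourhood of the target and passing to a convex combination, while the inequalities are handled either via a Slater point, which makes $\intt Z$ a nonempty open set meeting $Y$, or, when no Slater point exists, via the Dubovitskii--Milyutin relation $\sum_j\alpha_jq_j=0$, which turns one inequality into an equality on $Z$ and lets the induction proceed). You instead quotient by the common kernel $N$ of the functionals, split $X=X_0\oplus N$ with $X_0\subset Y$ finite-dimensional, and reduce the whole statement to the density of $Y\cap N$ in $N$; every step checks out, including the continuity of $(\pi|_{X_0})^{-1}$, which holds because any linear map out of a finite-dimensional Hausdorff TVS is continuous, and the Hausdorffness of $X/N$, which follows from $N=\bigcap_i\ker\ell_i$ being closed. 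What each approach buys: yours is uniform (no case analysis, no appeal to Dubovitskii--Milyutin), does not actually use local convexity of $X$, and exhibits the approximating elements $z_0+n^{(r)}$ explicitly; the Dmitruk-style induction avoids constructing a topological complement and stays entirely within convex-geometric reasoning. The only cosmetic correction: since $X$ need not be metrizable, the approximations $y^{(r)}\to n$ should be taken along nets rather than sequences; the argument is unchanged.
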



\if{
Let the cone $C$ be given by

\be
C=\{x\in X:(p_i,x)=0,\ \mathrm{for}\ i=
1,\hdots,\mu,\
(q_j,x)\leq 0,\ \mathrm{for}\ j=1,\hdots,\nu\}.
\ee

Let us show first, without lost of generality, that
the
equality constraints can be removed from the
formulation.
It suffices to consider the case where $C$ is given
by only
one equality $(p,x)=0.$

Take any point $x_0\in C$ and a convex
neighborhood$\mathcal{O}(x_0).$ We have to show that there
exists $x$
in $C\cap L\cap \mathcal{O}(x_0).$ Since the set
$(p,x)<0$
is open, its intersection with $\mathcal{O}(x_0)$ is
open
too and obviously nonempty, hence it contains a
point $x_1$
from the set $L,$ because the last one is dense in
$X.$
Similarly, the intersection of the sets $(p,x)<0$
and
$\mathcal{O}(x_0)$ contains a point $x_2\in L.$
Since
$\mathcal{O}(x_0)$ is convex, it contains a point
$x$ such
that $(p,x)=0,$ which belongs to $C$ and to $L\cap
\mathcal{O}(x_0).$

We can then consider only the case where $C$ is
given by a
finite number of inequalities. Suppose first that
there
exists $\xh\in C$ such that $(q_j,\xh)<0$ for all
$j,$
hence $\xh \in \mathrm{int}{C}.$ Take any $x_0\in C$
and
any convex neighborhood $\mathcal{O}(x_0).$ We have
to find
a point $x\in C\cap \mathcal{O}(x_0)\cap L.$ We know
that,
for any positive $\varepsilon,$ the point
$x_{\varepsilon}:=x_0+\varepsilon \xh$ lies in
$\mathrm{int}(C),$ and then there exists a positive
$\varepsilon$ such that this point lies also in
$\mathcal{O}(x_0).$ Thus, the open set
$\mathrm{int}{C}\cap
\mathcal{O}(x_0)$ is nonempty, and then contains a
point
$x$ from the dense set $L.$

Suppose now that the above point
$\xh\in\mathrm{int}{C}$
does not exist, and, without lost of generality,
that
$q_j\neq 0$ for every $j.$ In this case, by the
Dubovitskii-Milyutin theorem, there exist
multipliers
$\alpha_j\geq0,\ j=1,\hdots,\nu,$ not all zero, such
that
Euler-Lagrange equation holds: $\alpha_1
q_1+\hdots+\alpha_{\nu}q_{\nu}=0.$ Suppose, without
lost of
generality, that $\alpha_{\nu}>0.$ Then, for all $x
\in C$
we actually have $(q_{\nu},x)=0,$ not just $\leq0.$
This
means that the cone $C$ can be given by the
constraints
$(q_j,x)\leq 0,\ j=1,\hdots,\nu-1,\ (q_{\nu},x)=0.$
But, as
was already shown, the last equality can be removed,
so the
cone can be given by a smaller number of
inequalities.
Applying induction arguments, we arrive at a
situation when
either all the inequalities are changed into
equalities and
then removed, or the strict inequalities have a
nonempty
intersection. Since both cases are already
considered, the
proof is complete.
}\fi


\begin{proof}
[of Lemma \ref{conedense}] 
Set $X:=\{\wb\in\W_2:\text{\eqref{lineareq}-\eqref{lineareq0}}\, \text{hold}\},$ 
$Y:=\{\wb\in\W:\text{\eqref{lineareq}-\eqref{lineareq0}}\, \text{hold}\},$ $Z:=\C_2$ and apply Lemma \ref{lemmadense}.
\end{proof}


\section{Second order analysis}\label{SectionSOC}
We begin this section by giving an expression of the second order derivative of the Lagrangian function $\L,$ in terms of derivatives of $\ell$ and $H.$ We denote the related quadratic mapping by $\Omega.$
All the second order conditions we present are established in terms of either $\Omega$ or some transformed form of $\Omega.$ The main result of the current section is the necessary condition in Theorem \ref{strengthNC}, which is applied in Section \ref{SectionNC} to get the stronger condition given in  Theorem \ref{NCP2}.

\subsection{Second variation}

Let us consider the quadratic mapping
\be 
\label{Omega}
\begin{split}
\Omega  \lam  (\xb,\ub,\vb):=& \,
\half D^2\ell\lam(\xh_0,\xh_T)(\xb_0,\xb_T)^2  + \intT \left[\half\xb\tras H_{xx}\lam \xb \right. \\
& \left.+\, \ub\tras H_{ux}\lam\xb +
\vb\tras H_{vx}\lam\xb + \half\ub\tras H_{uu}\lam\ub + \vb\tras
H_{vu}\lam\ub\right] \dtt,
\end{split}
\ee
where, whenever the argument is missing, the corresponding function is evaluated on the reference trajectory $\wh.$ 

Note that $H_{xx}\lam = p\sum_{k=0}^m \vh_k \frac{\pr^2 f_k}{\pr x^2}$ is an $n\times n-$matrix, 
$H_{ux}\lam$ is equal to $p\sum_{k=0}^m \vh_k \frac{\pr^2 f_k}{\pr u\pr x}$ and it is $l\times n,$ 
$H_{vx}\lam$ is an $m\times n-$matrix whose $i$th. row is given by $H_{v_ix}\lam= p \frac{\pr f_i}{\pr x}.$ The matrix $H_{uu}\lam$ is equal to $p\sum_{k=0}^m \vh_k \frac{\pr^2 f_k}{\pr u^2}$ and it is $l\times l,$ and the $i$th. row of the $m\times l-$matrix $H_{vu}\lam$ is $H_{v_i u}\lam= p \frac{\pr f_i}{\pr u}.$


The result below gives an expression of the Lagrangian $\L$ around the nominal trajectory $\wh.$ For the sake of simplicity, the time variable is omitted in the statement.

\begin{lemma}[Lagrangian expansion] 
\label{expansionlagrangian}
Let $w=(x,u,v)\in\W$ be a trajectory and set $\delta w=(\delta x,\delta u,\delta v):=w-\wh.$
Then for every multiplier $\lambda\in\Lambda,$ 
\be
\label{expansionLag}
\L\lam(w) 
=\L\lam(\wh) 
+ \Omega\lam(\delta x,\delta u,\delta v)  
+\omega\lam(\delta x,\delta u,\delta v)  +\R(\delta x,\delta u,\delta v),
\ee
where  $\omega$ is a cubic mapping given by
\begin{align*}
\omega\lam&(\delta x,\delta u,\delta v)  := \\
&\intT \left[ H_{vxx}\lam(\delta x, \delta x,\delta v) + 2H_{vux}\lam(\delta x,\delta u,\delta v) 
+ H_{vuu}\lam(\delta u,\delta u,\delta v) \right] \dtt,
\end{align*}
and $\R$ satisfies the estimate
\benl
\R(\delta x,\delta u,\delta v)  = L_\ell |(\delta x_0,\delta x_T)|^3
+L K (1+\|v\|_\infty)\, \|(\delta x, \delta u)\|_{\infty}\|(\delta x,\delta u) \|_2^2.
\eenl
Here $L_\ell$ is a Lipschitz constant for $D^2\ell\lam$ uniformly in $\lambda\in \Lambda,$ $L$ is a Lipschitz constant for $D^2f_i$ for all $i=0,\dots,m,$ and $K:=\ds\sup_{\lambda\in \Lambda} \|p\|_\infty.$
\end{lemma}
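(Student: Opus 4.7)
The plan is to obtain \eqref{expansionLag} by Taylor-expanding every ingredient of $\L\lam(w)$, integrating by parts the $-p\dot x$ term with the help of the costate equation, and then exploiting the transversality and stationarity conditions of Definition \ref{DefMul} to kill the first-order contributions. What will survive at second order is $\Omega\lam$; what will survive at third order is $\omega\lam$; everything else will be absorbed into $\R$.

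Concretely, first I would split
\begin{equation*}
\L\lam(w) - \L\lam(\wh) = \bigl[\ell\lam(x_0,x_T) - \ell\lam(\xh_0,\xh_T)\bigr] + \intT p_t\Bigl[\sum_{i=0}^m v_{i,t}f_i(x_t,u_t) - \sum_{i=0}^m \vh_{i,t}f_i(\xh_t,\uh_t) - \delta\dot x_t\Bigr]\dtt.
\end{equation*}
For the endpoint term I would apply Taylor's formula to second order with Lagrange remainder: since $D^2\ell\lam$ is Lipschitz in the endpoint variable uniformly in $\lambda\in\Lambda$ with constant $L_\ell$, its remainder is bounded by $L_\ell|(\delta x_0,\delta x_T)|^3$. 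Inside the integral, after substituting $v=\vh+\delta v$ (recall $\delta v_0\equiv 0$), I would Taylor-expand each $f_i(x_t,u_t)$ at $(\xh_t,\uh_t)$ to second order in $(\delta x_t,\delta u_t)$, with a Lipschitz remainder $r_i(t)$ bounded by $\tfrac{L}{6}|(\delta x_t,\delta u_t)|^3$. Multiplying out $(\vh_i+\delta v_i)\cdot(\text{Taylor polynomial of }f_i)$ then yields contributions of total degree $0,1,2,3$ in $\delta w$, plus a tail of order $(1+|\delta v_t|)\,|(\delta x_t,\delta u_t)|^3$.

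Next I would integrate $-\int_0^T p_t\,\delta\dot x_t\,dt$ by parts, using the costate equation $\dot p = -H_x\lam = -pF_x$ and the transversality conditions \eqref{transvcond}: the boundary terms precisely cancel the linear endpoint piece $D\ell\lam(\xh_0,\xh_T)(\delta x_0,\delta x_T)$ coming from the Taylor expansion of $\ell$, while the interior piece $\int_0^T pF_x\,\delta x\,dt$ cancels the corresponding degree-1 contribution of the expanded integrand. What remains of the linear part of the integrand reduces to
\begin{equation*}
\intT\bigl[H_u\lam\,\delta u + H_v\lam\,\delta v\bigr]\dtt,
\end{equation*}
which vanishes by the stationarity conditions \eqref{stationarity}. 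The surviving quadratic contribution, once multiplied by $p_t$, reproduces the integrand of $\Omega\lam$ in \eqref{Omega}: the $\vh_i\,D^2 f_i(\delta x,\delta u)^2$ pieces give the $H_{xx},H_{ux},H_{uu}$ blocks, while the mixed $\delta v_i(f_{i,x}\delta x + f_{i,u}\delta u)$ pieces give the $H_{vx},H_{vu}$ blocks. The remaining cubic contribution $\tfrac12\,\delta v_i\,D^2 f_i(\delta x,\delta u)^2$, again multiplied by $p$, is exactly $\omega\lam$ term by term.

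For the remainder estimate, the endpoint Taylor tail contributes $L_\ell|(\delta x_0,\delta x_T)|^3$ directly. The time-integral remainder $\int_0^T p_t(\vh_{i,t}+\delta v_{i,t})r_i(t)\,dt$ is bounded, using $|p_t|\le K$ and $|\vh_t+\delta v_t|\le\|v\|_\infty$, by
\begin{equation*}
C L K (1+\|v\|_\infty)\intT |(\delta x_t,\delta u_t)|^3\dtt \le C L K (1+\|v\|_\infty)\|(\delta x,\delta u)\|_\infty\|(\delta x,\delta u)\|_2^2,
\end{equation*}
via the elementary inequality $\int |h|^3\,dt\le\|h\|_\infty\|h\|_2^2$. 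This matches the bound stated in the lemma, up to constants that can be absorbed into $L$. The main obstacle is the combinatorial bookkeeping in the product $(\vh_i+\delta v_i)\cdot(\text{Taylor polynomial of }f_i)$: the four families of resulting terms must be resorted by total degree in $\delta w$, so that the $\delta v_i\,D^2 f_i(\delta x,\delta u)^2$ piece is correctly recognized as cubic and routed into $\omega$, while the $\delta v_i(f_{i,x}\delta x+f_{i,u}\delta u)$ piece remains quadratic and is routed into $\Omega$. Once this sorting is carried out cleanly, the first-order cancellations via \eqref{costateeq}--\eqref{stationarity} are automatic, and the remainder estimate is a direct application of H\"older.
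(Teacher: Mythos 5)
Your proposal is correct and follows essentially the same route as the paper: second-order Taylor expansions of $\ell$ and of each $f_i$ with Lipschitz remainders, conversion of the first-order endpoint term $D\ell\lam(\delta x_0,\delta x_T)$ into a time integral via the transversality and costate equations, cancellation of the remaining first-order terms by the stationarity conditions \eqref{stationarity}, and the bound $\intT |h|^3\,\dtt\le\|h\|_\infty\|h\|_2^2$ for the remainder. The only cosmetic difference is the direction of the integration by parts (you integrate $-\int p\,\delta\dot x$ to produce boundary terms; the paper rewrites $D\ell$ as an integral), which is the same computation.
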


\begin{proof}
We shall omit the dependence on $\lambda$ for the sake of
simplicity. 
Let us consider the following second
order Taylor expansions, written in a compact form, 
\begin{eqnarray} 
\label{expell} 
&& \ell (x_0,x_T)
= \ell + {D\ell} (\delta x_0,\delta x_T)
+ \half {D^2\ell} (\delta x_0,\delta x_T)^2
+ L_\ell|(\delta x_0,\delta x_T)|^3, 
\\
\label{expfi}
 && f_i(x_t,u_t) = f_{i,t} + {D f_{i}} (\delta x_t,\delta u_t)
+ \half {D^2 f_{i}} (\delta x_t,\delta u_t)^2
+L |(\delta x_t,\delta u_t) |^3.
\end{eqnarray}
Observe that, in view of the transversality conditions \eqref{transvcond} and the costate equation \eqref{costateeq}, one has
\be
\label{Dell}
\begin{split}
{D\ell}\,(\delta x_0,\delta x_T) 
&= -p_0\, \delta x_0 + p_T \, \delta x_T \\
&= \intT \left[ \dot{p}\, \delta x + p \dot{\delta x} \right] \dtt =
\intT p\,\Big[-\sum_{i=0}^m\vh_i {D_xf_i}\,  \delta x +\dot{\delta x}\Big] \dtt.
\end{split}
\ee
In the definition of $\L$ given in \eqref{lagrangian}, replace $\ell(x_0,x_T)$ and $f_i(x,u)$  by their Taylor
expansions \eqref{expell}-\eqref{expfi} and use the identity \eqref{Dell}. This  yields
\begin{align*}
\L(w) 
=&\,\,\L(\wh) + \ds \intT \big[ H_u\delta u + H_v\delta v \big]\dtt + \Omega(\delta x,\delta
u,\delta v) \\
&+ \ds \intT \big[ H_{vxx}(\delta x, \delta x,\delta v) 
+ 2H_{vux}(\delta x,\delta u,\delta v) 
+ H_{vuu}(\delta u,\delta u,\delta v) \big] \dtt \\
&+ \, L_\ell |(\delta x_0,\delta x_T)|^3
+ L(1+\|v\|_\infty)\,\|(\delta x,\delta u) \|_{\infty} \ds\intT p\,|(\delta x,\delta u) |^2 \dtt.
\end{align*}
Finally, to obtain \eqref{expansionLag}, remove the first order terms by the stationarity conditions \eqref{stationarity}, and use the Cauchy-Schwarz inequality in the last integral.
This completes the proof.
\end{proof}

\begin{remark} From previous lemma one gets the identity
\be 
\Omega\lam (\wb) = \half D^2\L\lam (\wh)\, \wb^2.
\ee
\end{remark}


\subsection{Second order necessary condition}

Recall the second order condition below.

\begin{theorem}[Classical second order necessary
condition]
\label{classicalNC}
 If $\wh$ is a weak minimum of problem (P), then
\be \label{classicalNCeq}
\max_{\lambda\in \Lambda} \Omega \lam (\xb,\ub,\vb)
\geq 0,\ \mr{for}\ \mr{all}\ (\xb,\ub,\vb)\in\C.
\ee
\end{theorem}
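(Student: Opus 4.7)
The plan is a contradiction argument combined with a Lyusternik-type construction of a second order feasible perturbation. Suppose there exists $\wb=(\xb,\ub,\vb)\in\C$ with $\max_{\lambda\in\Lambda}\Omega\lam(\wb)<0$. Since $\Lambda$ is compact (Theorem \ref{LambdaCompact}) and $\lambda\mapsto\Omega\lam(\wb)$ is affine in $\lambda$, the maximum is attained at some $\lambda^*=(\alpha^*,\beta^*,p^*)\in\Lambda$; set $\mu:=\Omega[\lambda^*](\wb)<0$. A preliminary observation: the stationarity \eqref{stationarity}, transversality \eqref{transvcond}, and the linearized state equation \eqref{lineareq}--\eqref{lineareq0} imply, after integration by parts, the identity $\sum_i\alpha_i^*D\varphi_i(\xh_0,\xh_T)(\xb_0,\xb_T)+\sum_j\beta_j^*D\eta_j(\xh_0,\xh_T)(\xb_0,\xb_T)=0$ for every $\wb$ solving the linearized state equation; combined with $\wb\in\C$ this gives the linearized complementary slackness $\alpha_i^*D\varphi_i(\xh_0,\xh_T)(\xb_0,\xb_T)=0$ for each $i$.

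The central step is to construct, for small $\sigma>0$, a trajectory $w_\sigma\in\W$ of the form
\[ w_\sigma=\wh+\sigma\wb+\sigma^2 z_\sigma+o(\sigma^2), \qquad \|z_\sigma\|_\infty\ \text{uniformly bounded}, \]
that satisfies the state equation and forces $\eta_j(x_0^\sigma,x_T^\sigma)=0$ for all $j$ and $\varphi_i(x_0^\sigma,x_T^\sigma)=0$ for every $i\ge 1$ with $\alpha_i^*>0$. The inactive inequalities ($\alpha_i^*=0$) stay $\le 0$ automatically for $\sigma$ small by continuity. Existence of $z_\sigma$ follows from a Lyusternik/Robinson theorem applied to the $\lambda^*$-active endpoint constraint map, provided a Mangasarian--Fromovitz-type surjectivity condition holds on the linearization; the previously established linearized complementary slackness guarantees compatibility with the direction $\wb$.

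Applying Lemma \ref{expansionlagrangian} at $w_\sigma$ with multiplier $\lambda^*$, the first order terms vanish by stationarity and the cubic and remainder contributions are $o(\sigma^2)$, so
\[ \L[\lambda^*](w_\sigma)-\L[\lambda^*](\wh)=\sigma^2\mu+o(\sigma^2). \]
On the other hand, feasibility of $w_\sigma$ together with the tight-active-constraint construction yields $\alpha_i^*\varphi_i(x_0^\sigma,x_T^\sigma)=0$ for every $i\ge 1$ and $\beta_j^*\eta_j(x_0^\sigma,x_T^\sigma)=0$ for every $j$, hence $\L[\lambda^*](w_\sigma)=\alpha_0^*J(w_\sigma)$; the normalization $\varphi_i(\xh_0,\xh_T)=0$ gives $\L[\lambda^*](\wh)=\alpha_0^*J(\wh)$. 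Combining, $\alpha_0^*[J(w_\sigma)-J(\wh)]=\sigma^2\mu+o(\sigma^2)$, which is strictly negative for $\sigma$ small. If $\alpha_0^*>0$, this yields $J(w_\sigma)<J(\wh)$, contradicting weak optimality of $\wh$; if $\alpha_0^*=0$, the identity reads $0<0$, an immediate contradiction.

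The main obstacle is the Lyusternik construction of the second paragraph: the Mangasarian--Fromovitz surjectivity of the $\lambda^*$-active linearized constraints is not granted a priori, and its failure forces $\lambda^*$ to be replaced by a nearby multiplier whose active set admits the construction, with the replacement justified via the compactness of $\Lambda$ and a convex-duality argument. This flexibility in the choice of $\lambda$ is exactly what the $\max_{\lambda\in\Lambda}$ formulation provides, and without it the statement would require a separate constraint-qualification assumption.
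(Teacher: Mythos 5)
There is a genuine gap, and it sits exactly where you flag it: the Lyusternik step. To turn the strict inequality $\L[\lambda^*](w_\sigma)-\L[\lambda^*](\wh)=\sigma^2\mu+o(\sigma^2)<0$ into $\alpha_0^*[J(w_\sigma)-J(\wh)]<0$ you must force $\varphi_i(x_0^\sigma,x_T^\sigma)=0$ exactly for every $i\ge 1$ with $\alpha_i^*>0$ (mere feasibility $\varphi_i\le 0$ gives $\L[\lambda^*](w_\sigma)\le\alpha_0^*J(w_\sigma)+\L[\lambda^*](\wh)-\alpha_0^*J(\wh)$, i.e.\ an inequality pointing the wrong way). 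Forcing these inequalities to vanish at second order requires surjectivity of the linearization of the map $(\bar\eta,(\bar\varphi_i)_{\alpha_i^*>0})$, and this fails precisely in the degenerate situations the theorem must cover: if $\alpha_0^*=0$, the stationarity of the endpoint Lagrangian (the analogue of \eqref{dualQP2}) forces $\sum_{i\ge1}\alpha_i^*D\bar\varphi_i+\sum_j\beta_j^*D\bar\eta_j=0$ with nontrivial coefficients, so the active gradients are linearly dependent and no such surjectivity can ever hold; the same obstruction appears whenever the equality constraints themselves are not qualified, a case your argument does not address at all. Your proposed repair --- replace $\lambda^*$ by a nearby multiplier whose active set admits the construction --- is not substantiated: every multiplier still satisfies $\Omega\lam(\wb)<0$ under your contradiction hypothesis, but nothing guarantees that \emph{any} multiplier has an active set for which the Robinson/Lyusternik condition holds, and compactness of $\Lambda$ does not produce one.

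The paper avoids this trap by never choosing the multiplier in advance and never forcing the active inequalities to be tight. It first disposes of the non-qualified equality case by the trivial $\pm\lambda$ argument of Lemma \ref{degNC}. In the qualified case it introduces, for each critical direction, the auxiliary linear program \eqref{QPw} in which the inequality constraints are relaxed by a slack $\tau$; Proposition \ref{dualpb} shows its value is nonnegative using only the surjectivity of $D\bar\eta$ (via the Implicit Function Theorem), and linear programming duality then \emph{produces} a normalized pair $(\alpha,\beta)$ satisfying \eqref{DM2}, which is subsequently shown to extend to a Lagrange multiplier with $\Omega\lam(\xb,\ub,\vb)\ge0$. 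In other words, the flexibility you correctly identify as essential to the $\max_{\lambda\in\Lambda}$ formulation is obtained by duality after the perturbation argument, not by selecting the maximizing multiplier before it. Your first and third paragraphs (the linearized complementary slackness identity and the Lagrangian expansion via Lemma \ref{expansionlagrangian}) are correct, but the proof as written does not close.
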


A proof of Theorem \ref{classicalNC} can be found in Osmolovskii \cite{OsmThesis}.
Nevertheless, for the sake of completeness, we give a proof here.

We shall write problem (P) in an abstract form and, therefore,  we define for $j=1,\dots,d_\eta$ and $i=0,\dots,d_\varphi,$
\begin{gather*}
\bar{\eta}_j:\cR^n\times \U \times \V \rightarrow \cR,\quad (x_0,u,v)\mapsto \bar{\eta}_j(x_0,u,v):=\eta_j(x_0,x_T),\\
\bar{\varphi}_i:\cR^n\times \U \times \V \rightarrow \cR,\quad (x_0,u,v)\mapsto \bar{\varphi}_i(x_0,u,v):=\varphi_i(x_0,x_T),
\end{gather*}
where $x\in\W$ is the solution of \eqref{stateeq} associated to $(x_0,u,v).$ Hence, (P) can be written as the following problem in the space $\cR^n\times \U \times \V,$
\be
\label{AP}\tag{AP}
\begin{split}
\min\,\,&\bar\varphi_0(x_0,u,v);\\
\,\,\text{s.t.}\,\,
&\bar\eta_j(x_0,u,v)=0,\ \text{for}\ j=1,\dots,d_{\eta},\\
&\bar\varphi_i(x_0,u,v)\leq 0,\ \text{for}\ j=1,\dots,d_{\varphi}.
\end{split}
\ee
Note that if $\wh$ is a weak solution of (P) then $(\xh_0,\uh,\vh)$ is a local solution of (AP). 

\begin{definition} 
\label{DefCQ}
We say that the endpoint equality constraints are {\em qualified} if
\be\label{QC}
D\bar\eta(\xh_0,\uh,\vh)\ \text{is onto from}\ \cR^n\times \U\times\V\ \text{to}\ \cR^{d_{\eta}}.
\ee
When \eqref{QC} does not hold, the constraints are {\em not qualified}.
\end{definition}

The proof of Theorem \ref{classicalNC} is divided in two cases: qualified and not qualified endpoint equality constraints. In the latter case the condition \eqref{classicalNCeq} follows easily and it is shown in Lemma \ref{degNC} below. The proof for the qualified case is done by means of an auxiliary  linear problem and duality arguments.

\begin{lemma}\label{degNC}
If the equality constraints are not qualified then \eqref{classicalNCeq} holds.
\end{lemma}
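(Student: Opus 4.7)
The plan is to exploit the fact that non-qualification of the equality constraints yields an ``abnormal'' multiplier with $\alpha = 0$, whose sign can be freely reversed to dominate the max in \eqref{classicalNCeq}.

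First, I would unpack the non-qualification condition. The failure of \eqref{QC} means that the range of $D\bar\eta(\xh_0,\uh,\vh)$ is a proper subspace of $\cR^{d_\eta}$, so there exists $\beta\in\cR^{d_\eta,*}\setminus\{0\}$ with $|\beta|=1$ such that $\beta\,D\bar\eta(\xh_0,\uh,\vh)=0$. Spelling this out via the linearized dynamics, if $\Phi(t,s)$ is the state transition matrix associated with the system \eqref{lineareq}, then for every $(\xb_0,\ub,\vb)\in\cR^n\times\U\times\V$,
\[
0 = \beta\,D_{x_0}\eta\,\xb_0 + \beta\,D_{x_T}\eta\Big[\Phi(T,0)\xb_0 + \int_0^T \Phi(T,t)\big(F_{u,t}\ub_t + F_{v,t}\vb_t\big)\dtt\Big].
\]
Varying $\xb_0$, $\ub$ and $\vb$ independently gives three identities that will be used below.

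Next, I would construct a Lagrange multiplier of the form $\lambda_0=(0,\beta,p)$. Set $p_t:=\beta\,D_{x_T}\eta\,\Phi(T,t)$. Then $p_T=\beta\,D_{x_T}\eta=D_{x_T}\ell\lam[\lambda_0]$ (since $\alpha=0$), and differentiating yields $-\dot p_t = p_t F_{x,t} = H_x[\lambda_0](\wh,t)$, so the costate equation \eqref{costateeq} holds. The identity obtained by varying $\xb_0$ with $\ub=\vb=0$ gives $p_0 = -\beta\,D_{x_0}\eta = -D_{x_0}\ell[\lambda_0]$, i.e.\ transversality \eqref{transvcond}. Varying $\ub$ (resp.\ $\vb$) with the other perturbations set to zero yields $\int_0^T p_t F_{u,t}\ub_t\dtt=0$ (resp.\ with $F_v$), whence $H_u[\lambda_0]=0$ and $H_v[\lambda_0]=0$ a.e., which is \eqref{stationarity}. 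The normalization $|\alpha|+|\beta|=1$ and the sign constraint $\alpha\geq 0$ are trivially satisfied. Hence $\lambda_0\in\Lambda$.

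Finally, I would observe that, by the same checks, $-\lambda_0=(0,-\beta,-p)$ also lies in $\Lambda$: the sign constraint $\alpha\geq 0$ is preserved because the $\alpha$-component is still zero, and all defining equations are linear in $\lambda$. Since $\Omega\lam$ defined in \eqref{Omega} is linear in $\lambda$ (as the endpoint Lagrangian $\ell$ and the pre-Hamiltonian $H$ are), we have $\Omega[-\lambda_0]=-\Omega[\lambda_0]$, and therefore for every $(\xb,\ub,\vb)\in\C$,
\[
\max_{\lambda\in\Lambda}\Omega\lam(\xb,\ub,\vb)\ \geq\ \max\bigl\{\Omega[\lambda_0](\xb,\ub,\vb),\,\Omega[-\lambda_0](\xb,\ub,\vb)\bigr\}\ =\ |\Omega[\lambda_0](\xb,\ub,\vb)|\ \geq\ 0,
\]
which is exactly \eqref{classicalNCeq}.

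The only delicate step is the construction of $p$ and the verification that the integral identities derived from $\beta\,D\bar\eta=0$ match precisely the stationarity and transversality conditions of Definition \ref{DefMul}; everything else is routine algebra and the linearity of $\Omega$ in the multiplier. Note also that the critical cone $\C$ plays no real role here: the inequality is established pointwise for arbitrary $(\xb,\ub,\vb)\in\W$, and a fortiori on $\C$.
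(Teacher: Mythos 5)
Your proof is correct, and it rests on the same key idea as the paper's: non-surjectivity of $D\bar\eta(\xh_0,\uh,\vh)$ produces a nonzero abnormal multiplier $\lambda_0$ with $\alpha=0$, both $\lambda_0$ and $-\lambda_0$ lie in $\Lambda$, and since $\Omega\lam$ is linear in $\lambda$ one of $\Omega[\pm\lambda_0]$ is nonnegative at any given direction. The one substantive difference is in how the multiplier is built. The paper takes $p\equiv 0$ and asserts that $\sum_j\beta_j D\bar\eta_j(\xh_0,\uh,\vh)=0$ implies $\sum_j\beta_j D\eta_j(\xh_0,\xh_T)=0$ as a functional on all of $\cR^{2n}$, which is what the transversality conditions \eqref{transvcond} require when $p\equiv 0$; that implication only constrains $\beta\,D\eta$ on the subspace of endpoints $(\xb_0,\xb_T)$ reachable by linearized trajectories, so as written it needs justification. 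You instead set $p_t=\beta\,D_{x_T}\eta\,\Phi(T,t)$ and verify the costate, transversality and stationarity conditions directly from the three identities obtained by varying $\xb_0$, $\ub$ and $\vb$ separately; this is the rigorous route and costs only the (harmless) appearance of $p$-dependent integral terms in $\Omega[\lambda_0]$, which your linearity argument absorbs. In short: same strategy, but your construction of the costate is the careful version of the step the paper passes over.
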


\begin{proof}
Observe that since $D\bar\eta(\xh_0,\uh,\vh)$ is not onto, there exists $\beta\in\cR^{d_{\eta},*}$ with $|\beta|=1,$ such that $\sum_{j=1}^{d_{\eta}} \beta_j D\bar\eta_j(\xh_0,\uh,\vh)=0$ and, consequently, 
\benl
\sum_{j=1}^{d_{\eta}} \beta_j D\eta_j(\xh_0,\xh_T)=0.
\eenl
 Set $\lambda:=(\alpha,\beta,p)$ with  $\alpha=0$ and $p\equiv 0.$ Then both $\lambda$ and $-\lambda$ are in $\Lambda.$ Observe that \benl
\Omega\lam(\xb,\ub,\vb)=
\half \sum_{j=1}^{d_{\eta}} \beta_j D^2\eta_j(\xh_0,\xh_T)(\xb_0,\xb_T)^2.
\eenl
Thus, either $\Omega\lam(\xb,\ub,\vb)$ or $\Omega[-\lambda](\xb,\ub,\vb)$  is nonnegative. The desired result follows.
\end{proof}

Let us now deal with the qualified case. Take a critical direction $\wb=(\xb,\ub,\vb)\in \C$ and consider the problem in the variables $\tau\in\cR$ and $r=(r_{x_0},r_u,r_v)\in\cR^n\times \U\times \V$ given by
\be\label{QPw}\tag{QP$_{\wb}$}
\begin{split}
\min\,\, &\tau \\
\text{s.t.}\,\,\, & D\bar\eta(\xh_0,\uh,\vh) r +D^2\bar\eta(\xh_0,\uh,\vh)(\xb_0,\ub,\vb)^2 =0,\\
& D\bar\varphi_i(\xh_0,\uh,\vh) r +D^2\bar\varphi_i(\xh_0,\uh,\vh)(\xb_0,\ub,\vb)^2 \leq \tau,\ \ i=0,\dots,d_{\varphi}.
\end{split}
\ee

\begin{proposition}\label{dualpb}
Assume that $\wh$ is a weak solution of \eqref{AP} for which the endpoint equality constraints are qualified. Let $\wb\in \C$ be a critical direction. Then the problem  \eqref{QPw} is feasible and has nonnegative value.
\end{proposition}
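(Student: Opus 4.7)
The plan is to split the proposition into its two assertions. \emph{Feasibility} of (QP$_{\wb}$) is an immediate consequence of the qualification hypothesis: since $D\bar\eta(\xh_0,\uh,\vh)$ maps $\cR^n\times\U\times\V$ onto $\cR^{d_\eta}$, I can pick any $r\in\cR^n\times\U\times\V$ solving
$D\bar\eta(\xh_0,\uh,\vh)\,r=-D^2\bar\eta(\xh_0,\uh,\vh)(\xb_0,\ub,\vb)^2$
and then choose $\tau$ large enough so that the finitely many inequality constraints are satisfied.

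For the \emph{nonnegative value} assertion, I would argue by contradiction: assume $(\tau,r)$ is feasible for (QP$_{\wb}$) with $\tau<0$ and build a second-order admissible perturbation of $\wh$ whose cost is strictly below $\bar\varphi_0(\xh_0,\uh,\vh)$, contradicting the weak minimality of $(\xh_0,\uh,\vh)$ for (AP). Concretely, for small $\sigma\geq 0$ set
\begin{equation*}
z(\sigma):=(\xh_0,\uh,\vh)+\sigma(\xb_0,\ub,\vb)+\tfrac{\sigma^2}{2}r.
\end{equation*}
Taylor-expanding $\bar\eta$ at $(\xh_0,\uh,\vh)$ and using both the criticality $D\bar\eta\cdot(\xb_0,\ub,\vb)=0$ (from $\wb\in\C$) and the equality in (QP$_{\wb}$) yields $\bar\eta(z(\sigma))=o(\sigma^2)$. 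I then apply Lyusternik's surjective mapping theorem on the Banach space $\cR^n\times\U\times\V$, whose hypothesis is exactly the qualification condition of Definition \ref{DefCQ}, to produce a correction $\rho(\sigma)$ with $\|\rho(\sigma)\|_\infty=o(\sigma^2)$ such that $\bar\eta(z(\sigma)+\rho(\sigma))=0$.

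For each $i=0,\ldots,d_\varphi$, expanding $\bar\varphi_i$ at $z(\sigma)+\rho(\sigma)$ and using the inequality of (QP$_{\wb}$) gives
\begin{equation*}
\bar\varphi_i(z(\sigma)+\rho(\sigma))-\bar\varphi_i(\xh_0,\uh,\vh)
\leq \sigma\,D\bar\varphi_i\cdot(\xb_0,\ub,\vb)+\tfrac{\sigma^2}{2}\tau+o(\sigma^2).
\end{equation*}
Since $\wb\in\C$ enforces $D\bar\varphi_i\cdot(\xb_0,\ub,\vb)\leq 0$, a short case analysis (either the first-order term is strictly negative, or it vanishes and then $\tau<0$ dominates at second order) shows that the right-hand side is strictly negative for every sufficiently small $\sigma>0$. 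Combined with $\bar\eta(z(\sigma)+\rho(\sigma))=0$ this means $z(\sigma)+\rho(\sigma)$ is feasible for (AP) and strictly decreases the cost $\bar\varphi_0$. Since $z(\sigma)+\rho(\sigma)\to(\xh_0,\uh,\vh)$ in the $L^\infty$ topology, the resulting trajectory lies in any prescribed weak neighborhood of $\wh$, yielding the desired contradiction.

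The main technical obstacle I expect is the Lyusternik correction: one has to verify carefully that the surjective mapping theorem can be applied in the $L^\infty$-Banach-space setting with the quantitative bound $\|\rho(\sigma)\|_\infty=o(\sigma^2)$, so that subsequent second-order Taylor expansions are really governed by $\sigma^2$ and not by $\sigma$. Beyond this point everything reduces to routine Taylor expansions coupled with the critical-cone inequalities.
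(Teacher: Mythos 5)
Your proposal is correct and follows essentially the same route as the paper: feasibility via surjectivity of $D\bar\eta(\xh_0,\uh,\vh)$ plus a large enough $\tau$, and nonnegativity by contradiction using the second-order perturbation $(\xh_0,\uh,\vh)+\sigma(\xb_0,\ub,\vb)+\tfrac{\sigma^2}{2}r$, a corrector of size $o(\sigma^2)$ restoring the equality constraints (the paper invokes the Implicit Function Theorem where you invoke Lyusternik, which is the same tool here), and Taylor expansions of the $\bar\varphi_i$ combined with the critical-cone inequalities. The only cosmetic difference is your closing case analysis, which is unnecessary: since criticality gives $D\bar\varphi_i\cdot(\xb_0,\ub,\vb)\leq 0$ for all $i=0,\dots,d_\varphi$, the first-order term can simply be dropped and $\tfrac{\sigma^2}{2}\tau<0$ dominates the $o(\sigma^2)$ remainder.
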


Recall first the following notation. Given two functions $k_1:\cR^N \rightarrow \cR^{M}$ and $k_2: \cR^N  \rightarrow \cR^L,$ we say that $k_1$ is a {\it big-O} of $k_2$ around 0 and denote it by
\benl
k_1(x) = \mathcal{O} (k_2(x)),
\eenl
if there exists positive constants $\delta$ and $M$ such that $|k_1(x)| \leq M|k_2(x)|$ for $|x|<\delta.$ It is a {\it small-o} if $M$ goes to 0 as $|x|$ goes to 0. Denote this by
\benl
k_1(x) = o(k_2(x)).
\eenl

\begin{proof}
[of Proposition \ref{dualpb}]
Step I. Let us first show feasibility. Since $D\bar\eta(\xh_0,\uh,\vh)$ is onto, there exists $r\in \cR^n\times \U\times \V$ for which the equality constraint in \eqref{QPw} is satisfied. Set
\be
\label{zeta1}
\tau:=\max_{0\leq i \leq d_{\varphi}}\{D\bar\varphi_i(\xh_0,\uh,\vh) r +D^2\bar\varphi_i(\xh_0,\uh,\vh)(\xb_0,\ub,\vb)^2 \}.
\ee
Then $(\tau,r)$ is feasible for \eqref{QPw}.

Step II. Let us now prove that \eqref{QPw} has nonnegative value. Suppose on the contrary that there is $(\tau,r)\in \cR\times\cR^n\times \U\times \V$ feasible for \eqref{QPw}, with $\tau<0.$ 
We shall look for a family of feasible solutions of \eqref{AP}, that we refer as $\{r(\sigma)\}_{\sigma},$ with the following properties:  it is defined for small positive $\sigma$ and  it satisfies 
\be\label{estrsigma}
r(\sigma)\underset{\sigma\to0}{\longrightarrow} (\xh_0,\uh,\vh)\ \text{in} \  \cR^n\times \U\times \V,\ \text{and}\ 
\bar\varphi_0(r(\sigma) )< \bar\varphi_0(\xh_0,\uh,\vh).
\ee
The existence of such family $\{r(\sigma)\}_{\sigma}$ will contradict the local optimality of $(\xh_0,\uh,\vh).$
Consider hence
\benl
\tilde{r}(\sigma):= (\xh_0,\uh,\vh)+\sigma(\xb_0,\ub,\vb)+\half\sigma^2 r.
\eenl
Let $0\leq i\leq d_{\varphi}$ and observe that 
\be\label{estvarphi}
\begin{split}
\bar\varphi_i(\tilde{r}(\sigma))
=&
\,\bar\varphi_i(\xh_0,\uh,\vh)+\sigma D\bar\varphi_i(\xh_0,\uh,\vh)(\xb_0,\ub,\vb)\\
&+\half\sigma^2\left[ D\bar\varphi_i(\xh_0,\uh,\vh)r+D^2 \bar\varphi_i(\xh_0,\uh,\vh)(\xb_0,\ub,\vb)^2 \right] + o(\sigma^2)\\
\leq&\,\bar\varphi_i(\xh_0,\uh,\vh)+\half\sigma^2\tau+o(\sigma^2),
\end{split}
\ee
where last inequality holds since $(\xb,\ub,\vb)$ is a critical direction and in view of the definition of $\tau$  in \eqref{zeta1}.
Analogously, one has
\benl
\bar\eta(\tilde{r}(\sigma))=o(\sigma^2).
\eenl
Since $D\bar\eta(\xh_0,\uh,\vh)$ is onto, there exists $r(\sigma) \in \cR\times \U\times \V$ such that $\| r(\sigma)-\tilde{r}(\sigma)\|_{\infty}=o(\sigma^2)$ and $\bar\eta(r(\sigma))=0.$
This follows by applying the Implicit Function Theorem to the mapping
\benl
(r,\sigma)\mapsto \bar\eta\left((\xh_0,\uh,\vh)+\sigma(\xb_0,\ub,\vb)+\half\sigma^2 r\right)=\bar\eta(\tilde{r}(\sigma)).
\eenl
On the other hand, by taking $\sigma$ sufficiently small 
in estimate \eqref{estvarphi}, we obtain 
\benl
\bar\varphi_i({r}(\sigma))<\bar\varphi_i(\xh_0,\uh,\vh),
\eenl
since $\tau<0.$ Hence $r(\sigma)$ is feasible for \eqref{AP} and  verifies \eqref{estrsigma}. This contradicts the optimality of $(\xh_0,\uh,\vh).$ We conclude then that all the feasible solutions of \eqref{QPw} have $\tau\geq 0$ and, therefore, the value of \eqref{QPw} is nonnegative.
\end{proof}

We shall now go back to the proof of Theorem \ref{classicalNC}.

\begin{proof}
[of Theorem \ref{classicalNC}]
The not qualified case is covered by Lemma \ref{degNC} above. Hence, for this proof, assume that \eqref{QC} holds.

Given $\wb\in \C, $ note that \eqref{QPw} can be regarded as a linear problem in the variables $(\zeta, r),$ whose associated dual  is given by 
\begin{align}
\label{dualQP1}
&\sum_{i=0}^{d_{\varphi}} \alpha_i D^2\bar\varphi_i(\xh_0,\uh,\vh)(\xb_0,\ub,\vb)^2 + \sum_{j=1}^{d_{\eta}} \beta_j D^2\bar\eta_j(\xh_0,\uh,\vh)(\xb_0,\ub,\vb)^2 \rightarrow \max_{(\alpha,\beta)}\\
\label{dualQP2}
& \sum_{i=0}^{d_{\varphi}} \alpha_i D\bar\varphi_i(\xh_0,\uh,\vh)+\sum_{j=1}^{d_{\eta}} \beta_j D\bar\eta_j(\xh_0,\uh,\vh)=0,\\
\label{dualQP3}
&\sum_{i=0}^{d_{\varphi}} \alpha_i=1,\quad \alpha \geq 0.
\end{align}
The Proposition \ref{dualpb} above and the linear duality result in Bonnans \cite[Theorem 3.43]{BonOC} imply that \eqref{dualQP1}-\eqref{dualQP3} has finite nonnegative value  (the reader is referred to Shapiro \cite{Sha01} and references therein for a general theory on linear duality). Consequently, there exists a feasible solution $(\bar\alpha,\bar\beta)\in \cR^{d_\varphi+d_\eta+1}$ to \eqref{dualQP1}-\eqref{dualQP3}, with associated nonnegative and finite value. Set $(\alpha,\beta) := (\bar\alpha,\bar\beta)/(\sum_{i=0}^{d_{\varphi}} |\bar\alpha_i|+\sum_{j=1}^{d_{\eta}} |\beta_j|),$ whose the denominator is not zero, in view of \eqref{dualQP1}. We have $(\alpha,\beta)\in \cR^{d_\varphi+d_\eta+1}$ verifying \eqref{nontriv}-\eqref{alphapos}, \eqref{dualQP2} and such that
\be
\label{DM2}
\sum_{i=0}^{d_{\varphi}} \alpha_i D^2\bar\varphi_i(\xh_0,\uh,\vh)(\xb_0,\ub,\vb)^2+\sum_{j=1}^{d_{\eta}} \beta_j D^2\bar\eta_j(\xh_0,\uh,\vh)(\xb_0,\ub,\vb)^2 \geq0.
\ee
\if{
implies that there cannot exist $(\tau,r)\in \cR\times \cR^n\times \U\times \V$ such that
\benl
\left\{
\begin{split}
&D\bar\eta(\xh_0,\uh,\vh) r +D^2\bar\eta(\xh_0,\uh,\vh)(\xb_0,\ub,\vb)^2 =0,\\
&D\bar\varphi_i(\xh_0,\uh,\vh) r +D^2\bar\varphi_i(\xh_0,\uh,\vh)(\xb_0,\ub,\vb)^2 \leq \tau,\ \text{for}\ i=0,\dots,d_{\varphi},\\
&\tau <0.
\end{split}
\right.
\eenl
Therefore, the Dubovitskii-Milyutin Theorem \cite{DubMil} guarantees the existence of $(\alpha,\beta)\in\cR^{1+d_{\varphi}+d_{\eta}}$ such that $(\alpha,\beta)\neq0,$ $\alpha\geq 0$ and 
\begin{gather}
\label{DM1}
\ds\sum_{i=0}^{d_{\varphi}} \alpha_i D\bar\varphi_i(\xh_0,\uh,\vh) + \sum_{j=1}^{d_{\eta}} \beta_j D\bar\eta_j(\xh_0,\uh,\vh) =0,\\
\label{DM2}
\ds\sum_{i=0}^{d_{\varphi}}\alpha_i D^2\bar\varphi_i(\xh_0,\uh,\vh)(\xb_0,\ub,\vb)^2 + \sum_{j=1}^{d_{\eta}} \beta_j D^2\bar\eta_j(\xh_0,\uh,\vh)(\xb_0,\ub,\vb)^2 \geq 0
\end{gather}
}\fi
For this $(\alpha,\beta),$ let $p$ be the solution of \eqref{costateeq}
with final condition
\be
\label{DM5}
p_T=\sum_{i=0}^{d_{\varphi}} \alpha_i D_{x_T}\varphi_i(\xh_0,\xh_T) + \sum_{j=1}^{d_{\eta}} \beta_j D_{x_T}\eta_j(\xh_0,\xh_T).\ee
We shall prove that $\lambda := (\alpha,\beta,p)$ is in $\Lambda,$ i.e. that also the first line in \eqref{transvcond} and the stationarity conditions \eqref{stationarity} hold. Let $(\tilde{x},\tilde{u},\tilde{v})\in\W$ be solution of the linearized equation \eqref{lineareq}. In view of \eqref{dualQP2},
\be
\label{DM4}
\sum_{i=0}^{d_{\varphi}} \alpha_i D\bar\varphi_i(\xh_0,\uh,\vh)(\tilde{x}_0,\tilde{u},\tilde{v}) + \sum_{j=1}^{d_{\eta}} \beta_j D\bar\eta_j(\xh_0,\uh,\vh)(\tilde{x}_0,\tilde{u},\tilde{v}) =0,
\ee
Hence, rewriting in terms of the endpoint Lagrangian $\ell$ and using \eqref{DM5}-\eqref{DM4}, one has
\benl
\label{DM3}
0 = D\ell \lam (\xh_0,\xh_T)(\tilde{x}_0,\tilde{x}_T) = D_{x_0}\ell \lam (\xh_0,\xh_T)\tilde{x}_0 + p_T \tilde{x}_T \pm p_0 \tilde{x}_0.
\eenl
By regrouping terms in the previous equation, and due to the formula of differentiation of the product, we get
\be
\label{DM6}
\begin{split}
0&= \big( D_{x_0}\ell\lam(\xh_0,\xh_T)+p_0\big) \tilde{x}_0 + \intT (\dot{p}\tilde{x} +p \dot{\tilde{x}} ) \dtt \\
&= \big( D_{x_0}\ell\lam(\xh_0,\xh_T)+p_0\big) \tilde{x}_0 + \intT \big( H_u\lam \tilde{u} + H_v\lam \tilde{v} \big) \dtt,
\end{split}
\ee
where we used  \eqref{costateeq} and \eqref{lineareq} in the last equality. Since \eqref{DM6} holds for all $(\tilde{x}_0,\tilde{u},\tilde{v}) \in \cR^n \times \U\times \V,$ the first line in \eqref{transvcond} and the stationarity conditions in \eqref{stationarity} are necessarily verified.
Thus, $\lambda$ is an element of $\Lambda.$ On the other hand, simple computations yield that  \eqref{DM2} is equivalent to
\benl
\Omega\lam (\xb,\ub,\vb)\geq0,
\eenl
and, therefore, the result follows.
\end{proof}

We can obtain the following extension of \eqref{classicalNCeq} to the cone $\C_2.$

\begin{theorem}
\label{NCC2}
 If $\wh$ is a weak minimum of problem (P), then
\be 
\label{classicalNCeqC2}
\max_{\lambda\in \Lambda} \Omega \lam (\xb,\ub,\vb)
\geq 0,\quad \text{for all}\ (\xb,\ub,\vb)\in \C_2.
\ee
\end{theorem}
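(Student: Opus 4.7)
The strategy is a density-plus-compactness argument: approximate an arbitrary $\wb\in\C_2$ by critical directions in $\C$, apply Theorem \ref{classicalNC} to each approximant to get a sequence of multipliers along which $\Omega$ is nonnegative, extract a convergent subsequence of multipliers by compactness of $\Lambda$, and pass to the limit in $\Omega$.

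\textbf{Step 1 (extension of $\Omega$ to $\W_2$).} First I would observe that the quadratic mapping $\Omega[\lambda]$ defined in \eqref{Omega} makes sense and is continuous on $\W_2$ for each $\lambda\in\Lambda$. The coefficients $H_{xx}[\lambda]$, $H_{ux}[\lambda]$, $H_{vx}[\lambda]$, $H_{uu}[\lambda]$, $H_{vu}[\lambda]$ are bounded measurable functions of $t$ (since $\wh\in\W$ and $p\in W^{1,\infty}$), so all the integrands in \eqref{Omega} are integrable for $(\xb,\ub,\vb)\in L^2\times L^2\times L^2$. Moreover, the endpoint term $D^2\ell[\lambda](\xh_0,\xh_T)(\xb_0,\xb_T)^2$ is well defined since the Sobolev embedding $W^{1,2}(0,T;\cR^n)\hookrightarrow C([0,T];\cR^n)$ makes point evaluation at $0$ and $T$ continuous from $\X_2$ to $\cR^n$. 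Consequently $\wb\mapsto \Omega[\lambda](\wb)$ is continuous on $\W_2$.

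\textbf{Step 2 (joint continuity in $(\lambda,\wb)$).} Next I would establish that the map $(\lambda,\wb)\mapsto \Omega[\lambda](\wb)$ is continuous on $\Lambda\times \W_2$, where $\Lambda$ is endowed with the topology inherited from $\cR^s$ (see the paragraph after Theorem \ref{LambdaCompact}). The costate $p$ depends linearly and continuously on $(\alpha,\beta)$ through the linear ODE \eqref{costateeq}--\eqref{transvcond}; hence $\lambda^k\to\lambda^*$ in $\cR^s$ forces $p^k\to p^*$ uniformly on $[0,T]$, and accordingly all Hessian coefficients of $H$ and of $\ell$ evaluated at $\wh$ converge uniformly. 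Combined with the continuity in $\wb$ from Step 1 (and uniform boundedness of the coefficients for $\lambda$ ranging in the compact set $\Lambda$), this gives the required joint continuity.

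\textbf{Step 3 (density and passage to the limit).} Fix $\wb\in\C_2$. By Lemma \ref{conedense}, choose $\wb^k\in\C$ with $\wb^k\to\wb$ in $\W_2$. Theorem \ref{classicalNC} provides, for each $k$, a multiplier $\lambda^k\in\Lambda$ such that
\[
\Omega[\lambda^k](\wb^k)\ \geq\ 0.
\]
Since $\Lambda$ is compact (Theorem \ref{LambdaCompact}), extract a subsequence (not relabeled) with $\lambda^k\to \lambda^*\in\Lambda$. Using the joint continuity of Step 2 we pass to the limit to obtain
\[
\Omega[\lambda^*](\wb)\ =\ \lim_{k\to\infty}\Omega[\lambda^k](\wb^k)\ \geq\ 0,
\]
which yields \eqref{classicalNCeqC2}.

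\textbf{Main obstacle.} The only delicate point is Step 2: one must check that as $\lambda$ varies in the compact set $\Lambda\subset\cR^s$, the coefficients of the quadratic form $\Omega[\lambda]$ vary continuously (uniformly in $t$), so that convergence of $\lambda^k$ together with $\W_2$-convergence of $\wb^k$ is enough to pass to the limit in each term of \eqref{Omega}. This reduces to the continuous dependence of the solution of the linear costate equation \eqref{costateeq}--\eqref{transvcond} on the parameters $(\alpha,\beta)$, which is standard, so the obstacle is essentially bookkeeping rather than a substantial difficulty.
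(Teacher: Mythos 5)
Your proposal is correct and follows exactly the route the paper takes: the paper's own proof is a one-line appeal to the extension of $\Omega\lam$ to $\W_2$ (coefficients essentially bounded), the density Lemma \ref{conedense}, and the compactness of $\Lambda$ from Theorem \ref{LambdaCompact}. Your Steps 1--3 simply spell out the details that the paper leaves implicit, and they are all sound.
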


\begin{proof}
Observe first that $\Omega\lam$ can be extended to the space $\W_2$ since all the coefficients are essentially bounded. The result follows by  the density property of Lemma \ref{conedense} and  the compactness of $\Lambda$ proved in Theorem \ref{LambdaCompact}. 
\end{proof}


\subsection{Strengthened second order necessary condition}

In the sequel we aim to strengthen previous necessary condition by proving that the maximum in \eqref{classicalNCeqC2} remains nonnegative when taken in a smaller set of multipliers.

We shall first give a description of the subset of Lagrange multipliers we work with.
Set
\be\label{H2}
\H_2:=\{(\xb,\ub,\vb)\in\W_2:\eqref{lineareq}\ \text{holds}\},
\ee
and consider the subset of $\Lambda$ given by
\benl 
\Lambda^{\#}:=\{ \lambda\in \Lambda: \Omega\lam\
\text{is weakly-l.s.c. on }\H_2\},
\eenl
where `l.s.c.' means lower semicontinuous.
We have the following lemma that gives a characterization of $\Lambda^{\#},$ and the Theorem \ref{strengthNC} afterwards which is a new  necessary optimality condition.

\begin{lemma}
\label{Lambdawlsc}
\be 
\Lambda^{\#}=\{ \lambda\in \Lambda: H_{uu}
\lam\succeq0\ {\rm and}\ H_{vu}\lam = 0,\,\, {\rm a.e.}\, {\rm on}\,[0,T]\}.
\ee
\end{lemma}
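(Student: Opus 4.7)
The plan is to prove the two inclusions separately, reducing each direction to the combination of a Rellich-type compactness argument for the state and a classical fact about weak lower semicontinuity of integral quadratic forms on $L^2$. On $\H_2$ the state $\xb$ is uniquely determined by $(\xb_0,\ub,\vb)$ through \eqref{lineareq}-\eqref{lineareq0}, so a bounded family of controls in $L^2$ yields states precompact in $C([0,T])$. This observation localizes the question of weak lsc of $\Omega\lam$ onto the purely $\ub$-quadratic term and the $(\ub,\vb)$-bilinear cross term, which are governed exactly by $H_{uu}\lam$ and $H_{vu}\lam$.

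For the inclusion $\supseteq$ I assume $H_{uu}\lam\succeq 0$ and $H_{vu}\lam=0$ a.e., and let $(\xb^n,\ub^n,\vb^n)\rightharpoonup(\xb,\ub,\vb)$ weakly in $\W_2$ with all elements in $\H_2$. The linearized equation gives $\|\dot{\xb}^n\|_{L^2}$ bounded by $\|\xb^n_0\|+\|\ub^n\|_{L^2}+\|\vb^n\|_{L^2}$, hence $\xb^n$ is bounded in $W^{1,2}$ and, by Rellich, converges to $\xb$ strongly in $C([0,T])$. The endpoint quadratic and $\int\xb\tras H_{xx}\lam\xb\,\dtt$ pass to the limit by strong convergence; the mixed terms $\int\ub^n\tras H_{ux}\lam\xb^n$ and $\int\vb^n\tras H_{vx}\lam\xb^n$ pass by the weak-strong pairing; the $(\ub,\vb)$ cross term is identically zero; and the convex integral $\ub\mapsto\int\half\ub\tras H_{uu}\lam\ub\,\dtt$ is weakly lsc on $L^2$ because $H_{uu}\lam\succeq 0$ pointwise. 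Summing yields $\liminf\Omega\lam(\xb^n,\ub^n,\vb^n)\geq\Omega\lam(\xb,\ub,\vb)$.

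For the inclusion $\subseteq$ I argue by contraposition via explicit oscillating test sequences. Suppose first $H_{uu}\lam\not\succeq 0$ on a set of positive measure. A countable-rational-direction argument produces a constant $a\in\cR^l$ and a measurable $E\subset[0,T]$ of positive measure with $a\tras H_{uu}\lam(t)a<0$ on $E$. Set $\ub^n(t):=\sqrt 2\,\sin(nt)\,a\,\chi_E(t)$, $\vb^n\equiv 0$, and let $\xb^n$ solve \eqref{lineareq}-\eqref{lineareq0} with $\xb^n_0=0$; then $(\xb^n,\ub^n,\vb^n)\in\H_2$, by Riemann-Lebesgue $\ub^n\rightharpoonup 0$ in $L^2$, and the previous compactness argument gives $\xb^n\to 0$ uniformly. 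Every $\xb$-involving term vanishes in the limit while $\int(\ub^n)\tras H_{uu}\lam\ub^n\to\int_E a\tras H_{uu}\lam a\,\dtt<0$, so $\liminf\Omega\lam<0=\Omega\lam(0)$, contradicting weak lsc. Suppose instead $H_{vu}\lam\neq 0$ on a set of positive measure; pick indices $i,j$ and a measurable $E$ of positive measure on which $(H_{vu}\lam)_{ij}(t)$ has constant strictly positive sign, and for a parameter $c>0$ set $\ub^n(t):=\sqrt 2\,\sin(nt)\,e_j\,\chi_E(t)$ and $\vb^n(t):=-c\sqrt 2\,\sin(nt)\,e_i\,\chi_E(t)$, again with $\xb^n$ the linearized state starting at zero. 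Then $\ub^n,\vb^n\rightharpoonup 0$ in $L^2$, $\xb^n\to 0$ uniformly, and one computes
\benl
\lim_n \Omega\lam(\xb^n,\ub^n,\vb^n)=\tfrac12\int_E(H_{uu}\lam)_{jj}\,\dtt-c\int_E(H_{vu}\lam)_{ij}\,\dtt.
\eenl
Choosing $c$ large enough makes the right-hand side strictly negative, again contradicting weak lsc.

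The main obstacle is arranging, in the $\subseteq$ step, oscillating sequences that simultaneously lie in $\H_2$, converge weakly to zero in all three components, and isolate the algebraic quantity one wants to detect; this is dispatched cleanly by always defining $\xb^n$ as the solution of \eqref{lineareq}-\eqref{lineareq0} driven by $(\ub^n,\vb^n)$ with $\xb^n_0=0$, so feasibility is automatic and Rellich forces $\xb^n\to 0$ uniformly, killing every $\xb$-involving contribution to $\Omega\lam$ in the limit. Measurable-selection issues are bypassed by restricting to subsets of $[0,T]$ on which constant basis directions in $\cR^l$ and $\cR^m$ already capture the required sign.
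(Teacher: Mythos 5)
Your proof is correct, and it follows the paper's overall architecture: you split $\Omega\lam$ into a part that is weakly continuous on $\H_2$ (everything involving $\xb$, handled by the compact dependence of the state on the controls) plus the purely control-quadratic part $\intT\big[\half\ub\tras H_{uu}\lam\ub+\vb\tras H_{vu}\lam\ub\big]\dtt$, which is exactly the decomposition \eqref{Omega1}--\eqref{Omega2} in the paper. Where you diverge is in the decisive step: the paper disposes of the control-quadratic part by citing Hestenes' characterization (Lemma \ref{Lemmawlsc}), namely that the integral form is weakly l.s.c.\ if and only if the matrix in \eqref{Huvuv} is positive semidefinite a.e., and then reads off $H_{uu}\lam\succeq0$, $H_{vu}\lam=0$ from the zero lower-right block. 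You instead prove both implications from scratch: convexity of the integrand gives the sufficiency, and Riemann--Lebesgue oscillating sequences (with the state slaved to the controls through \eqref{lineareq}--\eqref{lineareq0} so that membership in $\H_2$ is automatic and all $\xb$-terms vanish in the limit) give the necessity, with the large parameter $c$ exploiting the absence of a $\vb\tras(\cdot)\vb$ term to kill any nonzero entry of $H_{vu}\lam$. What your route buys is self-containedness — you do not need the external reference, and the test sequences make transparent why the degenerate $\vb$-block forces $H_{vu}\lam=0$ rather than merely a sign condition; what the paper's route buys is brevity and the direct identification of the condition with the Legendre--Clebsch matrix \eqref{Huvuv}, which is reused in Remark \ref{remarkLC} and Corollary \ref{NCunique}. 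The only cosmetic point worth fixing is in your second oscillation argument: if $(H_{vu}\lam)_{ij}$ is negative rather than positive on the chosen set $E$, replace $e_i$ by $-e_i$ (or take $c<0$); as written you assert a strictly positive sign can be arranged, which is true but deserves that one-line justification.
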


\begin{theorem}[Strengthened second order necessary condition]
\label{strengthNC}
 If $\wh$ is a weak minimum of problem (P), then
\be \label{strengthNCeq}
\max_{\lambda\in \Lambda^{\#}} \Omega \lam
(\xb,\ub,\vb) \geq 0,\quad \text{for all}\ (\xb,\ub,\vb)\in \C_2.
\ee
\end{theorem}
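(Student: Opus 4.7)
The plan is to derive Theorem \ref{strengthNC} from Theorem \ref{NCC2} by a perturbation-and-contradiction argument, using the characterization of $\Lambda^{\#}$ in Lemma \ref{Lambdawlsc}. Suppose, for the sake of contradiction, that there exists $\wb \in \C_2$ with $\max_{\lambda\in\Lambda^{\#}}\Omega\lam(\wb) < -\rho$ for some $\rho > 0$. Since $\lambda \mapsto \Omega\lam(\wb)$ is linear in $\lambda$, hence continuous on the compact set $\Lambda$, the set $K := \{\lambda \in \Lambda : \Omega\lam(\wb) \geq 0\}$ is compact and, by the contradiction hypothesis together with continuity, is separated in $\Lambda$ from $\Lambda^{\#}$. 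I will construct a sequence $\wb^k \in \C_2$ with $\wb^k \rightharpoonup \wb$ weakly in $\W_2$ for which $\max_{\lambda\in\Lambda}\Omega\lam(\wb^k) < 0$ at large $k$, contradicting Theorem \ref{NCC2}.

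The perturbation takes the form $\delta\wb^k := \wb^k - \wb = (\delta\xb^k,\delta\ub^k,\delta\vb^k)$, where $(\delta\ub^k,\delta\vb^k) \in \U_2\times\V_2$ consists of high-frequency oscillations (bounded in $L^2$ but weakly convergent to $0$), and $\delta\xb^k$ solves the linearized state equation with zero initial condition. A Riemann--Lebesgue-type argument gives $\delta\xb^k \to 0$ strongly in $\X_2$, so the linearized endpoint relations \eqref{linearconseq}--\eqref{linearconsineq} hold up to $o(1)$; a finite-dimensional correction via the qualification condition (the non-qualified case being covered as in Lemma \ref{degNC}) places $\wb^k$ exactly in $\C_2$. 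Exploiting that the cross-terms with $\delta\xb^k$ vanish in the limit by strong convergence and by Riemann--Lebesgue, the quadratic form expansion gives
\benl
\Omega\lam(\wb^k) = \Omega\lam(\wb) + \int_0^T\!\Bigl[\tfrac12(\delta\ub^k)\tras H_{uu}\lam\,\delta\ub^k + (\delta\vb^k)\tras H_{vu}\lam\,\delta\ub^k\Bigr]\dtt + o(1),
\eenl
uniformly for $\lambda\in\Lambda$ by compactness of $\Lambda$.

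The design of $(\delta\ub^k,\delta\vb^k)$ rests on Lemma \ref{Lambdawlsc}: for each $\lambda \in K \subset \Lambda\setminus\Lambda^{\#}$, either $H_{uu}\lam \not\succeq 0$ on a set of positive measure, or $H_{vu}\lam \neq 0$ on such a set, and in either case a high-frequency pair $(\phi^\lambda,\psi^\lambda)$ makes the integrand above strictly negative. Continuity in $\lambda$ turns this into a uniformly negative value on an open neighborhood of $\lambda$, and compactness of $K$ extracts a finite cover with directions $(\phi_i,\psi_i)_{i=1}^N$. These are combined by placing them at mutually well-separated oscillation frequencies $\omega_i^k \to \infty$ with weights $c_i>0$, so that the quadratic contribution decouples in the limit into $\sum_i c_i^2 Q_i(\lambda)$. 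For $\lambda\in\Lambda^{\#}$ every $Q_i(\lambda)\geq 0$ and is controlled by a uniform bound on $H_{uu}\lam$; choosing $\sum c_i^2$ small keeps the total contribution at most $\rho/2$, so $\Omega\lam(\wb^k)<-\rho/2<0$ on $\Lambda^{\#}$. For $\lambda\in K$, the locally dominant index $i(\lambda)$ yields $Q_{i(\lambda)}(\lambda)\leq-\delta$ with $\delta>0$ uniform, which by a careful tuning of the $c_i$ drives $\sum_i c_i^2 Q_i(\lambda)$ below $-\sup_{\lambda\in K}\Omega\lam(\wb)$, giving $\Omega\lam(\wb^k)<0$ on $K$; on $\Lambda\setminus K$, where already $\Omega\lam(\wb)<0$, uniform continuity transfers this to $\wb^k$.

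The main obstacle is the simultaneous satisfaction of the two competing bounds on the weights $c_i$: smallness to control nonnegative contributions on $\Lambda^{\#}$, and enough magnitude in the dominant direction to overwhelm positive contributions coming from the other directions on $K$. This balance is delicate because each $(\phi_i,\psi_i)$ contributes a priori nonnegatively on $\Lambda^{\#}$, yet only one of them is guaranteed strongly negative at a given $\lambda \in K$. My plan to overcome this is to choose the $(\phi_i,\psi_i)$ with \emph{essentially disjoint time-supports}, so that in the decomposition $\sum_i c_i^2 Q_i(\lambda)$ the contributions from indices $j\neq i(\lambda)$ become quantitatively negligible against $Q_{i(\lambda)}$; together with an appropriate refinement of the cover, this allows the weights to be tuned so that both bounds hold. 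Once the construction succeeds, Theorem \ref{NCC2} applied to $\wb^k$ yields the contradiction, and \eqref{strengthNCeq} follows.
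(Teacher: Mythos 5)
Your overall strategy is sound in outline --- it is, in effect, an attempt to reprove in this concrete setting the abstract result the paper invokes (Lemma \ref{quadform}, Dmitruk's theorem on families of quadratic forms), whereas the paper's proof is a one-line application of that lemma together with Theorem \ref{NCC2} and Lemma \ref{Lambdawlsc}. The expansion $\Omega\lam(\wb^k)=\Omega\lam(\wb)+\int_0^T[\tfrac12(\delta\ub^k)\tras H_{uu}\lam\delta\ub^k+(\delta\vb^k)\tras H_{vu}\lam\delta\ub^k]\dtt+o(1)$ is correct, the control of the added term on $\Lambda^{\#}$ by taking $\|\delta\ub^k\|_2$ small is fine, and the pointwise construction of a negativity witness for each fixed $\lambda\notin\Lambda^{\#}$ is fine. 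But the step you yourself flag as ``the main obstacle'' is a genuine gap, and your proposed fix (essentially disjoint time-supports plus a refinement of the cover) does not close it. The problem is that disjoint supports do nothing to prevent $Q_j(\lambda)$ from being large and \emph{positive} for $\lambda$ outside the $j$-th neighborhood. Concretely, with two patches, the information you have established is only $Q_i(\lambda)\leq-\delta$ on $U_i$ and $|Q_j(\lambda)|\leq C$ everywhere, and the weight conditions you need read
\benl
c_1^2\,\delta-c_2^2\,C> S,\qquad c_2^2\,\delta-c_1^2\,C> S,
\eenl
with $S:=\sup_{\lambda\in K}\Omega\lam(\wb)\geq0$. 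Adding these gives $(\delta-C)(c_1^2+c_2^2)>2S\ge 0$, which is infeasible whenever $C\geq\delta$ --- and there is no reason for the positive parts of $H_{uu}\lam$ on the other patches to be dominated by $\delta$. No refinement of the cover or reshuffling of supports removes this sign conflict, because it is a conflict between the multipliers themselves, not between time intervals.

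The missing ingredient is the convex/affine structure in $\lambda$: the resolution in Dmitruk's lemma is a minimax (separation) argument over the convex compact set of multipliers, which, when a single perturbation cannot serve all $\lambda\in K$ simultaneously, produces a \emph{convex combination} $\bar\lambda$ of multipliers at which the form is weakly l.s.c.\ and nonnegative. This is exactly why convexity of $M$ is a hypothesis of Lemma \ref{quadform} and why the paper's subsequent refinement (Theorem \ref{newNC}) works on $G({\rm co}\,\Lambda^{\#})$ rather than on $\Lambda^{\#}$ itself. If you want a self-contained proof, you should replace the finite-cover-and-weights scheme by this minimax argument (the set of asymptotic ``gains'' $\lambda\mapsto\lim_k\int_0^T[\tfrac12(\delta\ub^k)\tras H_{uu}\lam\delta\ub^k+(\delta\vb^k)\tras H_{vu}\lam\delta\ub^k]\dtt$ achievable by weakly null perturbations is a convex cone of affine functions of $\lambda$, and one separates it from the target $-\Omega\lam(\wb)$); otherwise, simply cite Lemma \ref{quadform} as the paper does. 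The remaining issues in your write-up (the exact re-insertion of $\wb^k$ into the finitely-faced cone $\C_2$, and the non-qualified case) are minor and repairable.
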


\begin{remark}
\label{RemSing}
From now on we restrict the set of multipliers to $\Lambda^\#$ or some subset of it and, therefore, $H_{uv}\lam\equiv 0$ along $\wh.$ Consequently, 
\benl
D^2_{(u,v)^2}H\lam (\wh_t,t)\ \mr{is}\ \mr{a}\ \mr{singular}\ \mr{matrix}\ \mr{a.e.}\ \mr{on}\ [0,T].
\eenl
The latter assertion together with the stationarity \eqref{stationarity} imply that $(\wh,\lambda)$ is a {\it singular extremal} (as defined in Bell-Jacobson \cite{BelJac} and Bryson-Ho \cite{BryHo}).
\end{remark}

In order to prove  Lemma \ref{Lambdawlsc} note that $\Omega\lam$ can be written as the sum of two terms: the first one being a weakly-continuous function on the space $\H_2$ given by
\be\label{Omega1}
(\xb,\ub,\vb)\mapsto\half D^2\ell\lam(\xb_0,\xb_T)^2  + \intT \big[\half\xb\tras H_{xx}\lam \xb + \ub\tras H_{ux}\lam\xb +
\vb\tras H_{vx}\lam\xb\big]\dtt,
\ee
 and the second one being the quadratic operator
\be\label{Omega2}
(\ub,\vb)\mapsto 
\intT \big[ \half\ub\tras H_{uu} \lam\ub + \vb\tras
H_{vu}\lam\ub \big] \dtt.
\ee
The weak-continuity of the mapping in \eqref{Omega1} follows easily. On the other hand, in view of Hestenes \cite[Theorem 3.2]{Hes51} the following characterization holds.

\begin{lemma}
\label{Lemmawlsc}
The mapping in \eqref{Omega2} is weakly-lower semicontinuous on $\U\times \V$ if and only if the matrix
\be 
\label{Huvuv}
D^2_{(u,v)^2}H\lam=\begin{pmatrix}
H_{uu}\lam & H_{vu}\lam\tras \\
 H_{vu}\lam & 0\\
\end{pmatrix},
\ee
is positive semidefinite a.e. on $[0,T].$
\end{lemma}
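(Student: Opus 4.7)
The plan is to recognize the functional in \eqref{Omega2} as a classical quadratic form on $L^{2}(0,T;\cR^{l+m})$ governed by the symmetric coefficient matrix \eqref{Huvuv}, and then to invoke Hestenes' characterization directly. First I would verify the pointwise algebraic identity: setting $\zeta=(\ub,\vb)\tras\in\cR^{l+m}$ and $M(t):=D^2_{(u,v)^2}H\lam(t)$, a block-matrix expansion gives
\begin{equation*}
\zeta\tras M(t)\,\zeta
\;=\;\ub\tras H_{uu}\lam\,\ub+\ub\tras (H_{vu}\lam)\tras\vb+\vb\tras H_{vu}\lam\,\ub
\;=\;\ub\tras H_{uu}\lam\,\ub+2\,\vb\tras H_{vu}\lam\,\ub,
\end{equation*}
so that the mapping in \eqref{Omega2} coincides with $\half\intT \zeta_t\tras M(t)\,\zeta_t\,\dtt$. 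Under Assumption \ref{regular}, and since $\wh\in\W$ and $p\in W^{1,\infty}$, the entries of $M$ lie in $L^\infty(0,T)$, so this functional extends continuously to $\U_2\times\V_2$ and it is on that Hilbert space that weak lower semicontinuity is meaningful.

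For the sufficiency direction ($\Leftarrow$), if $M(t)\succeq 0$ a.e. on $[0,T]$, then the integrand is, pointwise in $t$, a nonnegative quadratic form in $\zeta$, hence convex. Integration preserves convexity, so $(\ub,\vb)\mapsto \half\intT \zeta_t\tras M(t)\zeta_t\,\dtt$ is a convex and norm-continuous functional on the Hilbert space $\U_2\times\V_2$, and every such functional is weakly lower semicontinuous.

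The necessity direction ($\Rightarrow$) is the essential point, and is precisely the content of Hestenes \cite[Theorem 3.2]{Hes51} once the functional has been put in the standard form above: if $M(t)$ fails to be positive semidefinite on a measurable set $E\subset[0,T]$ of positive measure, then using a Lusin-type reduction one can find a vector $\zeta_0\in\cR^{l+m}$ and a subset $E_0\subseteq E$ of positive measure such that $\zeta_0\tras M(t)\zeta_0\le -\delta<0$ for $t\in E_0$; testing against an oscillating sequence $\phi_n(t)\zeta_0\mathbf{1}_{E_0}(t)$ with $\phi_n\rightharpoonup 0$ weakly in $L^2$ and $\|\phi_n\|_{L^2(E_0)}=1$ yields a sequence converging weakly to zero along which the quadratic form stays bounded above by $-\tfrac{\delta}{2}$, violating weak lower semicontinuity at the origin. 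The main obstacle here is not the argument itself, which is packaged in Hestenes' theorem, but rather verifying that the hypotheses of that theorem apply, namely that the coefficient matrix $M(\cdot)$ is symmetric and essentially bounded on $[0,T]$, which follows from the block structure in \eqref{Huvuv} together with Assumption \ref{regular}.
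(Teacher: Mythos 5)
Your proposal is correct and follows essentially the same route as the paper: the paper's entire argument for this lemma is the observation that \eqref{Omega2} is the standard quadratic form $\half\intT \zeta\tras D^2_{(u,v)^2}H\lam\,\zeta\,\dtt$ in $\zeta=(\ub,\vb)$ with essentially bounded symmetric coefficient matrix, followed by a citation of Hestenes \cite[Theorem 3.2]{Hes51}. The only addition in your write-up is the self-contained sketch of both directions (convexity plus norm continuity for sufficiency, and the countable-selection-plus-oscillating-sequence construction for necessity), which is sound and merely unpacks what the cited theorem already provides.
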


\begin{remark}
\label{remarkLC}
The fact that the matrix in \eqref{Huvuv} is positive semidefinite is known as the {\em Legendre-Clebsch necessary optimality condition} for the extremal $(\wh,\lambda)$ (see e.g. \cite{BryHo,AgrSac} or Corollary \ref{NCunique} below). 
\end{remark}

\begin{proof}
[of Lemma \ref{Lambdawlsc}]
It follows from the decomposition given by \eqref{Omega1}-\eqref{Omega2} and previous Lemma \ref{Lemmawlsc}.
\end{proof}

Finally, to achieve Theorem \ref{strengthNC}, recall the following result on quadratic forms.

\begin{lemma}
\label{quadform}
\cite[Theorem 5]{Dmi84}
Given a Hilbert space $H,$ and
$a_1,a_2,\hdots,a_p$ in $H,$ set
\be
K:=\{x\in H:\la a_i,x \ra\leq 0,\ \mr{for}\
i=1,\hdots,p\}.
\ee
Let $M$ be a convex and compact subset of $\cR^s,$
and let
$\{Q^{\psi}:\psi\in M\}$ be a family of continuous
quadratic forms over $H,$ the mapping $\psi
\rightarrow
Q^{\psi}$ being affine. Set 
$M^{\#}:=\{ \psi \in M:\ Q^{\psi}\ \text{is
weakly-l.s.c.}\text{ on } H\}$ and assume that
\be
\max_{\psi\in M} Q^{\psi}(x)\geq 0,\ \text{for all}\ x\in K.
\ee
Then
\be
\max_{\psi\in
M^{\#}} Q^{\psi}(x)\geq 0,\ \text{for all}\ x\in K.
\ee
\end{lemma}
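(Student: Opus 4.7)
The plan is to proceed in two stages: first establish the structural properties of $M^{\#}$, then argue by contradiction by perturbing $x_0$ along a weakly null sequence in $K$.

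\textbf{Stage 1 (structure of $M^{\#}$).} I would show that $M^{\#}$ is a convex and closed, hence compact, subset of $M\subset\cR^s$. Convexity comes directly from the affine dependence $\psi\mapsto Q^{\psi}$: for $\psi_1,\psi_2\in M^{\#}$ and $t\in[0,1]$ the form $Q^{t\psi_1+(1-t)\psi_2}=tQ^{\psi_1}+(1-t)Q^{\psi_2}$ is a convex combination of weakly l.s.c.\ functions on $H$, hence weakly l.s.c. Closedness follows from the affine finite-dimensional dependence, which yields a uniform estimate $|Q^{\psi}(y)-Q^{\psi'}(y)|\le C|\psi-\psi'|\,\|y\|^2$ on bounded sets; a standard diagonal argument against weakly convergent sequences then transports the weak l.s.c.\ property from a convergent sequence in $M^{\#}$ to its limit.

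\textbf{Stage 2 (contradiction via perturbation).} Suppose for contradiction that some $x_0\in K$ satisfies $Q^{\psi}(x_0)<0$ for every $\psi\in M^{\#}$, so by compactness $\max_{\psi\in M^{\#}}Q^{\psi}(x_0)=-\varepsilon<0$. By hypothesis, the set $M^{+}(x_0):=\{\psi\in M:Q^{\psi}(x_0)\ge 0\}$ is a nonempty closed convex subset of $M$, disjoint from $M^{\#}$. Since both live in the finite-dimensional $\cR^s$, I would separate them by an affine hyperplane. For any $\psi^{+}\in M^{+}(x_0)$ we have $\psi^{+}\notin M^{\#}$, so $Q^{\psi^{+}}$ fails to be weakly l.s.c.; there is a weakly null unit sequence $h_n\rightharpoonup 0$ with $Q^{\psi^{+}}(h_n)\to c<0$. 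I would then project $h_n$ onto the tangent cone of $K$ at $x_0$, namely $\{y:\langle a_i,y\rangle\le 0,\,i\in I(x_0)\}$: the projection is nonexpansive, fixes $0$, and preserves weak nulls in the Hilbert setting. Choosing scalings $t_n\to\infty$ carefully, the perturbed points $x_n:=x_0+t_n\tilde h_n$ lie in $K$ with $x_n\rightharpoonup x_0$ and $Q^{\psi^{+}}(x_n)\to-\infty$.

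\textbf{Main obstacle.} The hard part is the combinatorial merging step: producing a \emph{single} sequence $x_n\in K$ along which $Q^{\psi}(x_n)\to-\infty$ simultaneously for every $\psi$ in (a neighborhood of) the whole set $M^{+}(x_0)$, not merely for one $\psi^{+}$. For this I would use the affine expansion $Q^{\psi}=Q^{\psi^{+}}+\sum_{i}(\psi-\psi^{+})_i\, R_i$ in fixed quadratic forms $R_i$ indexed by a basis of $\cR^s$, together with the compactness of $M^{+}(x_0)$ in $\cR^s$, to combine finitely many weakly null witnesses (one per coordinate direction, corrected by the separating hyperplane) into a single weakly null sequence that defeats every $Q^{\psi}$, $\psi\in M^{+}(x_0)$, uniformly. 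The resulting sequence contradicts the standing hypothesis $\max_{\psi\in M}Q^{\psi}(x_n)\ge 0$ along $x_n\in K$, completing the proof.
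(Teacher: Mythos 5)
First, a point of reference: the paper does not prove this lemma at all --- it is imported verbatim from Dmitruk \cite[Theorem 5]{Dmi84}, so there is no in-paper proof to compare against. Judged on its own terms, your proposal gets the routine parts right (Stage 1: convexity of $M^{\#}$ from the affine dependence, closedness from the bound $|Q^{\psi}(y)-Q^{\psi'}(y)|\le C|\psi-\psi'|\|y\|^2$; and the single-multiplier perturbation $x_0+t\tilde h_n$, where your appeal to weak continuity of the metric projection is false in general but is rescuable here because $K$ is polyhedral, so Hoffman's error bound gives $\|\tilde h_n-h_n\|\to 0$ from $\la a_i,h_n\ra\to 0$). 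But the step you yourself flag as the ``main obstacle'' is exactly the content of the theorem, and your proposed resolution does not close it. Witness sequences for different non-l.s.c.\ forms can be mutually destructive: if $Q^{\psi_1}=Q$ and $Q^{\psi_2}=-Q$ with $Q(h)=\|Ph\|^2-\|(I-P)h\|^2$ for a projection $P$ with infinite rank and corank, any weakly null sequence making $Q^{\psi_1}$ negative makes $Q^{\psi_2}$ positive, and no single sequence defeats both. Taking ``one witness per coordinate direction'' and adding them is not a mechanism: the coordinate forms $R_i$ need not fail weak l.s.c.\ at all, and an additive combination of witnesses produces uncontrolled cross terms $B(h,g)$. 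The device that actually works (and is the heart of Dmitruk's argument) is different: one shows that the set of achievable limit vectors $\bigl(\lim_n Q^{\psi}(h_n)\bigr)_{\psi}$ over weakly null sequences with $\|h_n\|\le 1$ is \emph{convex}, by mixing two sequences as $\sqrt{t}\,h_{n_k}+\sqrt{1-t}\,g_{m_k}$ with indices chosen so that the cross terms vanish asymptotically; only then does a separation/minimax argument in $\cR^{s+1}$ produce one sequence that is uniformly bad for a whole convex compact set of non-l.s.c.\ forms.

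Even granting that merging step, your contradiction is incomplete for a second reason: to contradict the hypothesis you must exhibit $x_n\in K$ with $\max_{\psi\in M}Q^{\psi}(x_n)<0$, i.e.\ you must defeat \emph{all} of $M$, not only $M^{+}(x_0)$. Along $x_n=x_0+t\,h_n$ the multipliers with $Q^{\psi}(x_0)\ge 0$ force $t$ to be large (to let the term $t^2Q^{\psi}(h_n)\le -\delta t^2$ dominate), while the multipliers with $Q^{\psi}(x_0)<0$ bounded away from $M^{+}(x_0)$ may satisfy $Q^{\psi}(h_n)>0$ and therefore force $t$ to be small; these two requirements can be incompatible, so a single scaling $t_n$ need not exist. (There is also the minor inconsistency that $t_n\to\infty$ with $\|\tilde h_n\|$ bounded below is incompatible with $x_n\rightharpoonup x_0$.) Resolving this tension is what makes Dmitruk's proof genuinely delicate (it proceeds by a finer analysis over the compact convex set $M$ rather than a one-parameter perturbation), and it is the reason the present paper cites the result rather than reproving it. As written, your proposal is a plausible opening but has a genuine gap at its central step.
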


\begin{proof}
[of Theorem \ref{strengthNC}]
It is a consequence of Theorem \ref{NCC2}, Lemmas \ref{Lambdawlsc} and \ref{quadform}. 
\end{proof}

We finish this section with the following Corollary.

\begin{corollary}[Legendre-Clebsch condition]
\label{NCunique}
If $\wh$ is a weak minimum for (P) with a unique associated multiplier $\hat\lambda,$ then $(\wh,\hat\lambda)$ satisfies the Legendre-Clebsch condition. In order words, the matrix in \eqref{Huvuv} is positive semidefinite and, consequently,
\be
\label{R0pos}
 H_{uu} \lamh\succeq0\ {\rm and}\ H_{vu}\lamh \equiv 0.
\ee
\end{corollary}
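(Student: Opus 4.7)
The plan is to derive the corollary almost immediately from Theorem \ref{strengthNC} and Lemma \ref{Lambdawlsc}, using only the hypothesis that the multiplier is unique.

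First I would invoke Theorem \ref{strengthNC}: since $\wh$ is a weak minimum of (P), one has
\[
\max_{\lambda\in \Lambda^{\#}} \Omega[\lambda](\xb,\ub,\vb) \geq 0,\quad \text{for all}\ (\xb,\ub,\vb)\in \C_2.
\]
Taking the trivial critical direction $(\xb,\ub,\vb)=(0,0,0)\in \C_2$ (or any other critical direction) makes the inequality meaningful only when the set $\Lambda^{\#}$ is non-empty, since otherwise the maximum would be $-\infty$. Hence Theorem \ref{strengthNC} implicitly yields $\Lambda^{\#}\neq\emptyset$.

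Next I would combine this with the uniqueness assumption. By definition, $\Lambda^{\#}\subset \Lambda$, and we are assuming $\Lambda=\{\hat\lambda\}$. Therefore $\Lambda^{\#}=\{\hat\lambda\}$, and in particular $\hat\lambda\in \Lambda^{\#}$.

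Finally I would apply the characterization in Lemma \ref{Lambdawlsc} to the element $\hat\lambda\in\Lambda^{\#}$, which gives exactly
\[
H_{uu}[\hat\lambda]\succeq 0\quad\text{and}\quad H_{vu}[\hat\lambda]\equiv 0,\qquad \text{a.e. on } [0,T],
\]
that is \eqref{R0pos}. The positive semidefiniteness of the block matrix in \eqref{Huvuv} then follows, since with the off-diagonal block $H_{vu}[\hat\lambda]$ vanishing, the matrix reduces to $\mathrm{diag}(H_{uu}[\hat\lambda],0)$, which is positive semidefinite precisely when $H_{uu}[\hat\lambda]\succeq 0$. No other obstacle arises: the main (and essentially only) step is the observation that uniqueness of $\hat\lambda$ forces $\Lambda^{\#}$ to coincide with $\{\hat\lambda\}$, so that the strengthened necessary condition collapses onto the Legendre–Clebsch condition at that multiplier.
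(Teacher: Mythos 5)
Your proposal is correct and follows essentially the same route as the paper: Theorem \ref{strengthNC} forces $\Lambda^{\#}\neq\emptyset$, uniqueness of the multiplier then gives $\Lambda^{\#}=\{\hat\lambda\}$, and the characterization in Lemma \ref{Lambdawlsc} (together with Lemma \ref{Lemmawlsc} for the equivalence with positive semidefiniteness of the block matrix in \eqref{Huvuv}) yields \eqref{R0pos}. Your version merely spells out the steps the paper leaves implicit.
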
 

\begin{proof}
It follows easily from Theorem \ref{strengthNC}. In fact, the inequality in \eqref{strengthNCeq} implies that $\Lambda^{\#}\neq \emptyset$ and, therefore, $\Lambda^{\#} = \{\hat\lambda\}$ and \eqref{R0pos} necessarily holds.
\end{proof}


\section{Goh Transformation}\label{GohT}

In this section we introduce a linear transformation which is applied to the variable $(\xb,\ub,\vb),$ and that is motivated by the following. In the previous section we were able to provide a necessary condition involving the nonnegativity on $\C_2$ of the maximum of $\Omega\lam$ over the set $\Lambda^\#.$ The next step  is to find a sufficient condition and, in order to achieve this, one would usually strengthen the inequality \eqref{strengthNCeq} to convert it into a condition of strong positivity. But since no quadratic term on $\vb$ appears in $\Omega,$ the latter cannot be strongly positive. The technique we employ to find the desired sufficient condition is transforming $\Omega$ into a new quadratic mapping in some transformed variable, that may result strongly positive on an appropriate transformed critical cone. For historical interest, we recall that Goh introduced this transformation in \cite{Goh66a} and employed it to derive necessary conditions in \cite{Goh66a,Goh66}. Afterwards, Dmitruk in \cite{Dmi77} stated a second order sufficient condition for control-affine systems (case $l=0$) in terms of the uniform positivity of the maximum of $\Omega$ in the corresponding transformed space
of variables. 

Consider hence the linear system in \eqref{lineareq} and
the change of variables 
\be \label{Goht}
\left\{
\ba{l}
\yb_t:= \ds\int_0^t \vb_s {\rm d}s, \\
\xib_t := \xb_t-  F_{v,t}\,\yb_t, 
\ea
\right.
\quad {\rm for}\ t\in [0,T].
\ee
This change of variables can be done in any linear system of differential equations, and it is often called \textit{Goh's transformation}.
Observe that $\xib$ defined in that way satisfies the linear equation
\be \label{xieq}
\dot\xib_{t} = F_{x,t}\,\xib_{t} + F_{u,t}\,\ub_{t} +B_t\,\yb_{t},\quad
\xib_0=\xb_0,
\ee
where 
\be
\label{B1}
B_{t}:= F_{x,t} F_{v,t}-\ddt F_{v,t}.
\ee
The $i$th. column of $B$ is given by
\benl
-\sum_{j=0}^m \vh_j[f_i,f_j]^x + D_u f_i\, \dot{\uh},
\eenl
where $[f_i,f_j]^x:=({\rm D}_xf_i)f_j-(D_xf_j)f_i,$ and it is referred as the {\it Lie bracket with respect to $x$} of the vector fields $f_i$ and $f_j.$
We make the following hypothesis of regularity of the controls.
\begin{assumption}
\label{SmoothControls}
The controls $\uh$ and $\vh$ are smooth. 
\end{assumption}


\subsection{Tranformed critical cones}
In this paragraph we present the critical cones
obtained after Goh's transformation.
Recall the linearized endpoint constraints and linearized cost function in \eqref{linearconseq}-\eqref{linearconsineq}, and the critical cones given in \eqref{C2}-\eqref{C}.
Let $(\xb,\ub,\vb)\in \C$ be a critical
direction. Define $(\xib,\yb)$ by the transformation
\eqref{Goht} and set $\hb:=\yb_T.$
Note that \eqref{linearconseq}-\eqref{linearconsineq} yield
\begin{gather}
\label{tlinearconseq} D\eta_j (\xh_0,\xh_T)(\xib_0,\xib_T+F_{v,T}\hb)=0,\quad
\mr{for}\,\, j=1,\hdots,d_{\eta},
\\
\label{tlinearconsineq}
D\varphi_i(\xh_0,\xh_T)(\xib_0,\xib_T+F_{v,T}\hb)\leq
0,\quad \mr{for}\,\,
i=0,\hdots,d_{\varphi}.
\end{gather}
Recall the definition of the linear space $\W_2$ in paragraph \ref{ParCritical}.
Denote by $\Y$ the space $W^{1,\infty}(0,T;\cR^m),$
and consider the cones
\be 
\label{P}
\P:= \{(\xib, \ub,\yb,\hb)\in \W \times
\cR^m:\,\yb_0=0,\,\yb_T=\hb,\,\text{\eqref{xieq},
\eqref{tlinearconseq}-\eqref{tlinearconsineq} hold} \},
\ee
\be \label{P2}
\P_2:= \{(\xib, \ub,\yb,\hb)\in \W_2 \times
\cR^m:\,\text{\eqref{xieq}, \eqref{tlinearconseq}-\eqref{tlinearconsineq}
hold} \}.
\ee
\begin{remark}
 \label{PandC}
Note that $\P$ consists of the directions obtained by transformating the elements of $\C$ via \eqref{Goht}.
\end{remark}

The next result shows the density of $\P$ in $\P_2.$ This fact is useful afterwards to extend a necessary condition stated in $\P$ to the bigger cone $\P_2$ by continuity arguments, as it was done for $\C$ and $\C_2$ in Section \ref{SectionSOC}.
\begin{lemma} \label{PdenseP2}
$\P$ is a dense subset of $\P_2$ in the $ \W_2
\times \cR^m-$topology.
\end{lemma}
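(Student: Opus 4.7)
The plan is to invoke the abstract density-of-cones lemma (Lemma \ref{lemmadense}) in exactly the same spirit as the proof of Lemma \ref{conedense}. I would take $X$ to be the linear space of tuples $(\xib,\ub,\yb,\hb)\in \W_2\times \cR^m$ satisfying the linearized equation \eqref{xieq}, $Y$ to be the linear space of tuples $(\xib,\ub,\yb,\hb)\in \W\times \cR^m$ satisfying \eqref{xieq}, $\yb_0=0$ and $\yb_T=\hb$, and $Z:=\P_2$. Then $Z$ is cut out of $X$ by the finitely many affine constraints \eqref{tlinearconseq}--\eqref{tlinearconsineq}, so it is a finite-faced cone, and by construction $Z\cap Y=\P$. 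The whole proof thus reduces to checking that $Y$ is dense in $X$ in the $\W_2\times\cR^m$ topology; the conclusion then follows from Lemma \ref{lemmadense}.

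For the density of $Y$ in $X$, fix $(\xib,\ub,\yb,\hb)\in X$. The datum $\hb\in\cR^m$ is finite-dimensional and is kept unchanged throughout: $\hb^k:=\hb$. To approximate $\yb$ while respecting the boundary conditions $\yb^k_0=0$ and $\yb^k_T=\hb$, I would set $\yb^0(t):=(t/T)\hb$, which is smooth and satisfies the prescribed endpoint values, and write $\yb=\yb^0+z$ with $z\in L^2(0,T;\cR^m)$. Since $C_c^\infty(0,T;\cR^m)$ is dense in $L^2(0,T;\cR^m)$, pick $z^k\in C_c^\infty(0,T;\cR^m)$ with $z^k\to z$ in $L^2$ and set $\yb^k:=\yb^0+z^k$; then $\yb^k\in W^{1,\infty}$, $\yb^k_0=0$, $\yb^k_T=\hb$, and $\yb^k\to\yb$ in $L^2$. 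The nonlinear control $\ub\in L^2$ is approximated by $\ub^k\in L^\infty$ (for instance by smooth functions) with $\ub^k\to\ub$ in $L^2$ and no boundary constraints to respect.

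To complete the approximating tuple, I would define $\xib^k$ as the solution of the linearized equation \eqref{xieq} driven by $(\ub^k,\yb^k)$ with initial condition $\xib^k_0:=\xib_0$. Since $\ub^k,\yb^k\in L^\infty$ and the coefficients $F_x$, $F_u$, $B$ are bounded (thanks to Assumption \ref{SmoothControls}), one obtains $\xib^k\in W^{1,\infty}$. A standard Gronwall estimate applied to the linear equation satisfied by $\xib^k-\xib$ yields
\benl
\|\xib^k-\xib\|_{W^{1,2}}\le C\bigl(\|\ub^k-\ub\|_{L^2}+\|\yb^k-\yb\|_{L^2}\bigr),
\eenl
so $\xib^k\to\xib$ in $W^{1,2}$. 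The tuple $(\xib^k,\ub^k,\yb^k,\hb)$ therefore lies in $Y$ and converges in $\W_2\times\cR^m$ to $(\xib,\ub,\yb,\hb)$, establishing density.

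No serious obstacle arises in this argument; the only delicate point is the simultaneous requirement of $L^2$-approximation of $\yb$ and preservation of the two pointwise boundary conditions, which is circumvented cleanly by the affine shift $\yb^0$ together with the density of $C_c^\infty$ in $L^2$. Once density of $Y$ in $X$ is in hand, Lemma \ref{lemmadense} applied to the finite-faced cone $Z=\P_2$ yields the desired conclusion $\overline{\P}=\P_2$ in the $\W_2\times\cR^m$ topology.
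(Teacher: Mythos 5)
Your proposal is correct and follows essentially the same route as the paper: it introduces the same spaces $X$, $Y$ and cone $Z=\P_2$ and applies Lemma \ref{lemmadense}, the only difference being that the paper cites \cite[Lemma 6]{DmiShi10} or \cite[Lemma 8.1]{ABDL11} for the density of $Y$ in $X$, whereas you prove it directly. Your explicit argument for that sub-step (the affine shift $(t/T)\hb$ to preserve the endpoint conditions $\yb_0=0$, $\yb_T=\hb$, density of compactly supported smooth functions in $L^2$, and a Gronwall estimate giving $\xib^k\to\xib$ in $W^{1,2}$) is sound and simply makes the proof self-contained.
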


\begin{proof}
Note that the inclusion is immediate. In order to prove the density, consider the linear spaces
\begin{gather*}
 X:= \{ (\xib,\ub,\yb,\hb)\in \W_2\times \cR^m:\ \eqref{xieq}\ {\rm holds}\},\\
Y:=\{ (\xib,\ub,\yb,\hb)\in \W\times \cR^m:\ \yb(0)=0,\, \yb(T)=\hb\ {\rm and}\ \eqref{xieq}\ {\rm holds}\},
\end{gather*}
and the cone
\benl
Z:= \{ (\xib,\ub,\yb,\hb)\in X:\ \text{\eqref{tlinearconseq}-\eqref{tlinearconsineq}}\ {\rm holds}\}.
\eenl
Note that $Y$ is a dense linear subspace of $X$  (by  \cite[Lemma 6]{DmiShi10} or \cite[Lemma 8.1]{ABDL11}), and $Z$
is a finite-faced cone of $X. $ The desired density follows by Lemma \ref{lemmadense}. 
\end{proof}


\subsection{Transformed second variation}
Next we write the quadratic mapping $\Omega$ in the variables $(\xib,\ub,\yb,\vb,\hb).$
Set, for $\lambda\in\Lambda_L^{\#},$
\be 
\label{OmegaP}
\begin{split}
 \Omega_{\P}  &\lam (\xib,\ub,\yb,\vb,\hb) 
:= g\lam (\xib_0,\xib_T,\hb)
+ \ds \intT \left( \half\xib\,\tras H_{xx}\lam \xib + \ub\tras
H_{ux}\lam\xib \right.\\
 &\left. +\, \yb\tras M\lam \xib + \half\ub\tras H_{uu}\lam
\ub + \yb\tras E\lam \ub +
\half\yb\tras R\lam \yb + \vb\tras V\lam \yb
  \right) \dtt,
\end{split}
\ee 
where 
\begin{gather}
\label{M}
M:= F_v\tras H_{xx}-\dot H_{vx}-H_{vx}F_x,\quad E:= F_v\tras H_{ux}\tras - H_{vx}F_u,
\\
\label{SV}
S:=\half (H_{vx}F_v + (H_{vx}F_v)\tras),\quad
V:= \half (H_{vx}F_v - (H_{vx}F_v)\tras),
\\
\label{R1}
R := F_v\tras H_{xx}F_v - (H_{vx}B+(H_{vx}B)\tras) -
\dot S,
\\
\label{g}
g\lam (\zeta_0,\zeta_T,h):=
\half\ell''(\zeta_0,\zeta_T+F_{v,T}\,h)^2
+h\tras(H_{vx,T}\, \zeta_T+\half S_T h).
\end{gather}
Observe that, in view of Assumptions \ref{regular} and \ref{SmoothControls}, all the functions defined above are continuous in time. 
 
We can see that $M$ is a $m\times n-$matrix whose $i$th. row is given by 
\benl
M_i = p\sum_{j=0}^m \vh_j  \left(  \frac{\pr^2 f_j}{\pr x^2} f_i - 
 \frac{\pr^2 f_i}{\pr x^2} f_j +  \frac{\pr f_j}{\pr x} \frac{\pr f_i}{\pr x} - \frac{\pr f_i}{\pr x} \frac{\pr f_j}{\pr x} \right)- p \frac{\pr^2 f_i}{\pr x\pr u} \dot{\uh},
\eenl
$E$ is $m\times l$ with
$
E_{ij}=p\ds \frac{\pr^2 F}{\pr u_j \pr x} f_i - p  \frac{\pr f_i}{\pr x}\frac{\pr F}{\pr u_j},
$
the $m\times m-$matrices $S$ and $V$ have entries
$
S_{ij} = \ds\half p \left( \frac{\pr f_i}{\pr x}f_j + \frac{\pr f_j}{\pr x}f_i \right),
$
and
\be
\label{Vcrochet}
V_{ij}=p[f_i,f_j]^x.
\ee
The components of matrix $R$ have a quite long expression, that is simplified for some multipliers as it is detailed in the next section (see equation \eqref{Rij}).

\begin{theorem} 
\label{Omegat}
Let $(\xb,\ub,\vb) \in \H_2$ and $(\xib,\yb)$ defined by the transformation
\eqref{Goht}. Then
\benl
\Omega\lam (\xb,\ub,\vb) = \Omega_{\P} \lam
(\xib,\ub,\yb,\vb,\yb_T).
\eenl
\end{theorem}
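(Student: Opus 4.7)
The plan is to substitute $\xb = \xib + F_v \yb$ and $\vb = \dot{\yb}$ into the expression \eqref{Omega} for $\Omega[\lambda]$, and then integrate by parts the terms linear in $\dot{\yb}$ so as to eliminate all occurrences of $\vb$ except in the coupling with the antisymmetric matrix $V$. Throughout, we use that $H_{vu}[\lambda] \equiv 0$ for $\lambda\in\Lambda^\#$ (Lemma \ref{Lambdawlsc}), which kills the $\vb^\top H_{vu}\ub$ contribution, and that $\yb_0 = 0$, $\yb_T = \hb$, which controls the boundary terms.

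First I would expand the purely quadratic pieces. The identity $\xb_0 = \xib_0$ and $\xb_T = \xib_T + F_{v,T}\hb$ immediately rewrites $\tfrac12 D^2\ell[\lambda](\xb_0,\xb_T)^2$ as the first summand of $g[\lambda]$ in \eqref{g}. Expanding $\xb^\top H_{xx}\xb$ yields the term $\tfrac12\xib^\top H_{xx}\xib$, a cross term $\yb^\top F_v^\top H_{xx}\xib$, and the symmetric self-term $\tfrac12 \yb^\top F_v^\top H_{xx} F_v \yb$. Similarly, $\ub^\top H_{ux}\xb$ splits into $\ub^\top H_{ux}\xib + \yb^\top F_v^\top H_{ux}^\top \ub$.

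The main technical step is handling $\int_0^T \vb^\top H_{vx}\xb\,dt = \int_0^T \dot{\yb}^\top H_{vx}(\xib + F_v\yb)\,dt$. For the $\xib$-part, integration by parts (using $\yb_0 = 0$) gives
\[
\int_0^T \dot{\yb}^\top H_{vx}\xib\,dt = \hb^\top H_{vx,T}\xib_T - \int_0^T \yb^\top\bigl(\dot H_{vx} + H_{vx} F_x\bigr)\xib\,dt - \int_0^T \yb^\top H_{vx}F_u\ub\,dt - \int_0^T \yb^\top H_{vx} B\yb\,dt,
\]
where I used the linearized equation \eqref{xieq} for $\dot\xib$. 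Adding the cross term coming from $\xb^\top H_{xx}\xb$ produces the coefficient $M = F_v^\top H_{xx} - \dot H_{vx} - H_{vx}F_x$ in front of $\yb^\top(\cdot)\xib$, and adding the contribution from $\ub^\top H_{ux}\xb$ produces the coefficient $E = F_v^\top H_{ux}^\top - H_{vx}F_u$ in front of $\yb^\top(\cdot)\ub$, matching \eqref{M}.

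For the $F_v\yb$-part I would split $H_{vx}F_v = S + V$ according to \eqref{SV}. Since $V$ is antisymmetric, $\dot{\yb}^\top V\yb = \vb^\top V\yb$ does not need to be integrated by parts and produces the $\vb^\top V\yb$ summand in $\Omega_\P$. The symmetric part gives
\[
\int_0^T \dot{\yb}^\top S\yb\,dt = \tfrac12 \hb^\top S_T \hb - \tfrac12\int_0^T \yb^\top \dot S\,\yb\,dt.
\]
Combining with $\tfrac12 F_v^\top H_{xx} F_v$ from the expansion and $-H_{vx}B$ from the previous IBP (symmetrized, as the quadratic form only sees the symmetric part), the coefficient of $\tfrac12\yb^\top(\cdot)\yb$ becomes exactly $R$ as in \eqref{R1}, while the boundary contribution $\hb^\top H_{vx,T}\xib_T + \tfrac12\hb^\top S_T\hb$ combines with the endpoint Hessian term to give $g[\lambda](\xib_0,\xib_T,\hb)$.

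The main obstacle is purely bookkeeping: keeping track of which contributions get symmetrized into $R$, which antisymmetric piece survives as the $\vb^\top V\yb$ coupling, and which endpoint contributions collapse into $g[\lambda]$. Once this is organized carefully, the identity $\Omega[\lambda](\xb,\ub,\vb) = \Omega_\P[\lambda](\xib,\ub,\yb,\vb,\yb_T)$ follows term by term.
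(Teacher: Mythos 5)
Your proposal is correct and follows essentially the same route as the paper's proof: substitute $\xb=\xib+F_v\yb$ into \eqref{Omega} (after discarding $\vb\tras H_{vu}\lam\ub$ via Lemma \ref{Lambdawlsc}), integrate $\int\vb\tras H_{vx}\xib\,\dtt$ by parts using \eqref{xieq}, and split $H_{vx}F_v=S+V$ with a second integration by parts on the symmetric part. The bookkeeping you describe for $M$, $E$, $R$ and $g$ matches the paper's computation exactly.
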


\begin{proof}
First recall that the term $\vb\tras H_{vu}\lam \ub$ in $\Omega\lam$ vanishes since we are taking $\lambda\in \Lambda^{\#}$ and, in view of Lemma \ref{Lambdawlsc}, $H_{vu}\lam \equiv 0.$
In the remainder of the proof we omit the dependence on $\lambda$ for the sake of
simplicity.
Replacing $\xb$ in \eqref{Omega} by its expression in
\eqref{Goht} yields
\be
\label{J2}
\begin{split}
\Omega  (\xb,\ub,\vb) 
=&\, \half\ell''(\xh_0,\xh_T)(\xib_0,\xib_T+F_{v,T}\,\yb_T)^2
+ \ds \int_0^T \left[ \half(\xib+F_v\,\yb)\tras H_{xx}(\xib+F_v\,\yb) 
\right. \\
& \left. +\,\ub\tras H_{ux} (\xib+F_v\,\yb)  + \vb\tras H_{vx} (\xib+F_v\,\yb) + \half\ub\tras H_{uu}\,\ub \right] \dtt.
\end{split}
\ee
In view of \eqref{xieq} one gets
\be
\label{J2.2.1}
\int_0^T \vb\tras H_{vx}\, \xib \dtt
=[\yb\tras H_{vx}\, \xib]_0^T -\int_0^T \yb\tras
\{\dot{H}_{vx}\,\xib+H_{vx}(F_x\,\xib+F_u\,\ub+B\,\yb)\} \dtt.
\ee
The decomposition of $H_{vx}\,F_v$ introduced  in \eqref{SV} followed by an integration by parts leads to  
\be\label{J2.2.2}
\begin{split}
\int_0^T \vb\tras H_{vx}\,F_v \yb \dtt
&=\int_0^T \vb\tras (S+V) \yb \dtt\\
&=\half[\yb\tras S\yb]_0^T+\int_0^T(-\half \yb\tras
\dot S\yb + \vb\tras V\yb )\dtt.
\end{split}
\ee
The result follows by replacing using \eqref{J2.2.1} and \eqref{J2.2.2} in \eqref{J2}.
\end{proof}

Finally recall Theorem \ref{strengthNC}. Observe that by performing Goh's transformation in \eqref{strengthNCeq} and in view of Remark \ref{PandC}, we obtain the following form of the second order necessary condition.
\begin{corollary}
 \label{transNC}
 If $\wh$ is a weak minimum of problem (P), then
\be \label{maxOmegaP}
\max_{\lambda\in \Lambda^{\#}} \Omega_{\P} \lam
(\xib,\ub,\yb,\dot\yb,\yb_T) \geq 0,\quad \text{for all}\ (\xib,\ub,\yb,\yb_T)\in\P.
\ee
\end{corollary}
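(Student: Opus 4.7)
The plan is to derive \eqref{maxOmegaP} as a direct translation of Theorem~\ref{strengthNC} into the new coordinates produced by Goh's transformation. The two key ingredients are already in hand: (i) the identity $\Omega\lam(\xb,\ub,\vb)=\Omega_{\P}\lam(\xib,\ub,\yb,\vb,\yb_T)$ from Theorem~\ref{Omegat}, and (ii) the correspondence between $\C$ and $\P$ recorded in Remark~\ref{PandC}. So the work reduces to making that correspondence precise and then reading off the inequality.

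First I would pick an arbitrary $(\xib,\ub,\yb,\hb)\in\P$ and invert Goh's transformation by setting $\vb:=\dot\yb$ and $\xb:=\xib+F_v\yb$. Since $\yb\in\Y=W^{1,\infty}(0,T;\cR^m)$, the resulting $\vb$ lies in $\V$ and $\xb$ lies in $\X$. A short computation using the definition \eqref{B1} of $B$ and the equation \eqref{xieq} satisfied by $\xib$ should recover the linearized state equation \eqref{lineareq} for $\xb$ with initial condition $\xb_0=\xib_0$. The endpoint conditions \eqref{tlinearconseq}--\eqref{tlinearconsineq} built into the definition of $\P$ then become precisely the linearized constraints \eqref{linearconseq}--\eqref{linearconsineq} on $(\xb_0,\xb_T)$, since $\xb_T=\xib_T+F_{v,T}\hb$ thanks to $\yb_T=\hb$. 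This places $(\xb,\ub,\vb)$ in $\C$.

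Once the inclusion $(\xb,\ub,\vb)\in\C\subset\C_2$ is established, Theorem~\ref{strengthNC} supplies $\max_{\lambda\in\Lambda^{\#}}\Omega\lam(\xb,\ub,\vb)\geq 0$, and Theorem~\ref{Omegat} rewrites each term appearing in that maximum as $\Omega_{\P}\lam(\xib,\ub,\yb,\dot\yb,\yb_T)$, yielding exactly \eqref{maxOmegaP}. I do not anticipate any real obstacle in carrying this out; the only point deserving a bit of care is ensuring that the inverse transformation actually lands in $\C$ rather than merely in $\C_2$, so that Theorem~\ref{strengthNC} can be invoked without having to run a density argument through Lemma~\ref{PdenseP2}. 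The requirement $\yb\in W^{1,\infty}$ in the definition \eqref{P} of $\P$, combined with Assumption~\ref{SmoothControls} on the regularity of $\uh,\vh$ (which ensures $F_v\in W^{1,\infty}$), is exactly what makes this work.
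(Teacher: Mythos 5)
Your argument is correct and is exactly the route the paper takes: the paper obtains Corollary \ref{transNC} in one line by ``performing Goh's transformation in \eqref{strengthNCeq}'' and invoking Remark \ref{PandC} together with Theorem \ref{Omegat}, which is precisely what you spell out (inverting \eqref{Goht}, checking that the resulting $(\xb,\ub,\vb)$ satisfies \eqref{lineareq}--\eqref{lineareq0} and \eqref{linearconseq}--\eqref{linearconsineq}, then applying Theorems \ref{strengthNC} and \ref{Omegat}). The only superfluous worry is whether the inverse transformation lands in $\C$ rather than merely $\C_2$: since Theorem \ref{strengthNC} is already stated on the larger cone $\C_2$, landing in $\C_2$ would have sufficed and no density argument is needed either way.
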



\section{New second order necessary condition}\label{SectionNC}

We aim to remove the dependence on $\vb$ in the formulation of the  second order necessary condition of Corollary \ref{transNC}.  
Note that in \eqref{maxOmegaP}, $\vb$ appears only in the term $\vb\tras V\lam \yb.$  Next we prove that we can
restrict the maximum in \eqref{maxOmegaP} to the
subset of $\Lambda^{\#}_L$ consisting of the
multipliers for which $V\lam$ vanishes.

Denote by ${\rm co}\, \Lambda^{\#}$ the convex hull of $\Lambda^{\#}$  and let $G({\rm co}\, \Lambda^{\#})$ be the subset of ${\rm co}\, \Lambda^{\#}$ for which $V\lam$ vanishes, i.e. 
\benl
G({\rm co}\, \Lambda^{\#}):=\{\lambda\in {\rm co}\, \Lambda^{\#}: V\lam \equiv 0\ \rm{on}\ [0,T]\}.
\eenl

The following optimality condition holds.
\begin{theorem}[New necessary condition]\label{newNC}
 If $\wh$ is a weak minimum of problem (P), then
\benl
\max_{\lambda\in G(\mr{co}\,\Lambda^{\#})} 
\Omega_{\P} \lam (\xib,\ub,\yb,\dot\yb,\yb_T) \geq
0,\quad \text{on}\ \P.
\eenl
\end{theorem}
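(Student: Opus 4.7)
The plan is to follow the two-step scheme used in passing from Theorem \ref{NCC2} to Theorem \ref{strengthNC}: first extend the condition of Corollary \ref{transNC} from $\Lambda^{\#}$ to $\mr{co}\,\Lambda^{\#}$, and then further restrict the selection to $G(\mr{co}\,\Lambda^{\#})$. For the first step, note that for each fixed $w=(\xib,\ub,\yb,\hb)\in \P$ the map $\lambda\mapsto \Omega_{\P}\lam(\xib,\ub,\yb,\dot\yb,\yb_T)$ is affine in $\lambda=(\alpha,\beta,p)$, since $p$ depends linearly on $(\alpha,\beta)$ through \eqref{costateeq}-\eqref{transvcond} and each coefficient $H_{xx}\lam$, $H_{ux}\lam$, $M\lam$, $E\lam$, $R\lam$, $V\lam$, $g\lam$ appearing in \eqref{OmegaP} is linear in the multiplier. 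Since the maximum of an affine functional on a compact convex set is attained at an extreme point, Corollary \ref{transNC} immediately gives
\[
\max_{\lambda\in \mr{co}\,\Lambda^{\#}} \Omega_{\P}\lam(\xib,\ub,\yb,\dot\yb,\yb_T)\geq 0,\quad \forall (\xib,\ub,\yb,\hb)\in \P.
\]

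For the second step I would use a high-frequency perturbation argument in the spirit of Goh. Assume for contradiction that there exists $w^{*}\in\P$ with $\Omega_{\P}\lam(w^{*})<0$ for every $\lambda\in G(\mr{co}\,\Lambda^{\#})$; by the display above there exists $\lambda^{*}\in \mr{co}\,\Lambda^{\#}\setminus G(\mr{co}\,\Lambda^{\#})$ attaining the max at $w^{*}$, so $V[\lambda^{*}](t_{0})\neq 0$ for some $t_{0}\in (0,T)$ and one may pick $a,b\in\cR^{m}$ with $\gamma:=a^{\top}V[\lambda^{*}](t_{0})\,b>0$. Perturb $w^{*}$ by $\delta w_{n}$ with $\delta\yb_{n}(t):=\chi(t)\bigl(\cos(nt)\,a+\sin(nt)\,b\bigr)$ for a smooth cut-off $\chi$ near $t_{0}$ satisfying $\chi(0)=\chi(T)=0$, $\delta\ub_{n}\equiv 0$, $\delta\xib_{n}$ the linearized response to $\delta\yb_{n}$ via \eqref{xieq}, plus an $o(1)$ correction to restore \eqref{tlinearconseq}-\eqref{tlinearconsineq} (available under the qualification hypothesis because $\delta\xib_{n}(T)\to 0$ as $\delta\yb_{n}\rightharpoonup 0$ in $L^{2}$; if \eqref{QC} fails, Lemma \ref{degNC} already produces a multiplier in $G$). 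Using the antisymmetry of $V$ — namely $a^{\top}V a=b^{\top}V b=0$ and $b^{\top}V a=-\gamma$ — a direct computation yields
\[
\int_{0}^{T}\dot{\delta\yb}_{n}^{\top}V[\lambda^{*}]\,\delta\yb_{n}\,dt = -n\gamma\int_{0}^{T}\chi(t)^{2}\,dt + O(1),
\]
while every other contribution to $\Omega_{\P}[\lambda^{*}](w^{*}+\delta w_{n})$ stays uniformly bounded as $n\to\infty$: $\|\delta\yb_{n}\|_{\infty}=O(1)$, $\delta\xib_{n}$ is uniformly bounded by Gronwall applied to \eqref{xieq}, and every boundary term arising from integration by parts vanishes thanks to $\chi(0)=\chi(T)=0$. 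Hence $\Omega_{\P}[\lambda^{*}](w^{*}+\delta w_{n})\to -\infty$, so the maximum over $\mr{co}\,\Lambda^{\#}$ at $w^{*}+\delta w_{n}$ must be attained at some $\lambda_{n}$ bounded away from $\lambda^{*}$; running this construction along a countable dense family of triples $(t_{0},a,b)$ and invoking the continuity of $\lambda\mapsto V\lam$ together with the compactness of $\mr{co}\,\Lambda^{\#}$, every cluster point $\bar\lambda$ of $(\lambda_{n})$ must satisfy $V[\bar\lambda]\equiv 0$, i.e. $\bar\lambda\in G(\mr{co}\,\Lambda^{\#})$, and passing to the limit in $\Omega_{\P}[\lambda_{n}](w^{*}+\delta w_{n})\geq 0$ gives $\Omega_{\P}[\bar\lambda](w^{*})\geq 0$, contradicting the starting assumption.

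The main obstacle is the concluding diagonal selection: forcing a cluster point $\bar\lambda$ of the maximizers $(\lambda_{n})$ to lie in $G(\mr{co}\,\Lambda^{\#})$ rather than merely in $\mr{co}\,\Lambda^{\#}$. This requires running the perturbation uniformly against all candidates $\lambda\notin G$ at once, which is done by diagonalizing over a pointwise-dense family of $(t_{0},a,b)$ and using the finite-dimensionality of $\mr{co}\,\Lambda^{\#}$ and the continuous dependence $\lambda\mapsto V\lam$. The passage to the limit in $\Omega_{\P}[\lambda_{n}](w^{*}+\delta w_{n})$ is harmless because, apart from the isolated $V$-term neutralized by the selection, the remaining quadratic contributions are weakly continuous in $\delta\yb$ thanks to the Rellich compactness of the embedding $W^{1,2}\hookrightarrow L^{2}$.
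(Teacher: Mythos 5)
First, a point of reference: the paper does not actually prove Theorem \ref{newNC}; it states that the proof of \cite[Theorem 4.6]{ABDL11}, which goes back to Dmitruk \cite{Dmi77} and Milyutin \cite{Mil81}, carries over with minor modifications. So your attempt cannot be matched against an in-paper argument and must stand on its own. Your first step (passing from $\Lambda^{\#}$ to $\mr{co}\,\Lambda^{\#}$) is fine, indeed trivial, since enlarging the set of multipliers can only increase the maximum in Corollary \ref{transNC}. The gap is exactly where you flag it, and it is not a removable technicality: it is the whole content of the theorem. For a single test $(t_0,a,b,\chi)$ the oscillatory term contributes $-n\int\chi^2\,a^\top V[\lambda]\,b\,\dtt+O(1)$ to $\Omega_{\P}[\lambda](w^*+\delta w_n)$, and the linear functional $L(\lambda):=\int\chi^2 a^\top V[\lambda] b\,\dtt$ in general changes sign on $\mr{co}\,\Lambda^{\#}$ (swapping $a$ and $b$ flips it, so unless it vanishes identically it takes both signs). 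For any $\lambda$ with $L(\lambda)<0$ one gets $\Omega_{\P}[\lambda](w^*+\delta w_n)\to+\infty$, so the maximizers $\lambda_n$ are driven toward $\argmin_\lambda L$, i.e.\ toward multipliers where $a^\top V[\lambda]b$ is \emph{most negative} — away from $G(\mr{co}\,\Lambda^{\#})$, not toward it. ``Moving away from $\lambda^*$'' is not ``moving toward $G$.'' The proposed diagonalization over a countable dense family of tests does not repair this: any finite superposition still admits, in general, multipliers far from $G$ whose total oscillatory contribution is positive and dominant, and these capture the maximum. Circumventing this sign obstruction is precisely the nontrivial convexity/minimax argument of Milyutin and Dmitruk, and it is absent from your sketch.

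Two secondary issues. (i) With fixed amplitude, the non-oscillatory quadratic contributions of $\delta w_n$ do not vanish in the limit — e.g.\ $\tfrac12\int\delta\yb_n^\top R[\lambda]\delta\yb_n\,\dtt\to\tfrac14\int\chi^2(a^\top R[\lambda]a+b^\top R[\lambda]b)\dtt$ — so even in the favorable case the limit of $\Omega_{\P}[\lambda_n](w^*+\delta w_n)\geq 0$ only yields $\Omega_{\P}[\bar\lambda](w^*)\geq -c$ with a constant $c$ that may be positive; you need amplitudes $\eps_n\to0$ with $n\eps_n^2\to\infty$ to isolate the $V$-term. (ii) The feasibility correction restoring \eqref{tlinearconseq}--\eqref{tlinearconsineq} must also handle the active linearized \emph{inequalities}, not only the equality constraints covered by the qualification \eqref{QC}; the critical cone $\P$ is a cone, not a subspace, so a generic $o(1)$ correction supplied by the implicit function theorem is not automatically admissible.
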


\begin{remark}
It should be observe that one can have $0\in G({\rm co}\, \Lambda^{\#})$ and, in this case, the second order condition in Theorem \ref{newNC} does not provide any information. 
This situation may occur when the endpoint constraints are {\it not qualified,} in the sense of Assumption \ref{cq} introduced afterwards. This qualification is a natural generalization of the Mangasarian-Fromovitz \cite{ManFro67} condition to the infinite-dimensional framework.
\end{remark}

Theorem \ref{newNC} is an adaptation of similar results given in Dmitruk \cite{Dmi77} and Milyutin \cite{Mil81}, that were employed recently in Aronna et al. \cite{ABDL11}. The proof given in \cite[Theorem 4.6]{ABDL11} holds for Theorem \ref{newNC} with minor modifications and hence we do not include it in the present article.

Note that when $\wh$ has a unique associated multiplier, from Theorem \ref{newNC} we deduce that $G(\mr{co}\,\Lambda^{\#})$ is not empty and, since the latter is a singleton, we get the corollary below. This corollary is one of the necessary conditions stated by Goh in \cite{Goh66}.

\begin{corollary} 
\label{CoroCBsym}
 Assume that $\wh$ is a weak minimum having a unique associated multiplier. Then the following conditions holds. \begin{itemize} \item[(i)] $V\equiv 0 $ or, equivalently,  $H_{vx}F_v$ is symmetric or, in view of \eqref{Vcrochet},
\benl
p[f_i,f_j]^x=0,\quad \text{for}\ i,j=1,\dots,m,
\eenl
where $p$ is the unique associated adjoint state.
\item[(ii)] The matrix
\be
\label{R2}
\begin{pmatrix}
H_{uu} & E^\top \\
E & R
\end{pmatrix}
\ee
is positive semidefinite.
\end{itemize}

\end{corollary}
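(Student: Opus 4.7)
The plan is to derive both (i) and (ii) from the necessary condition in Theorem \ref{newNC} combined with the uniqueness of the multiplier. First, by Corollary \ref{NCunique} the unique multiplier $\hat\lambda$ satisfies the Legendre-Clebsch condition, so $\hat\lambda \in \Lambda^{\#}$; since $\Lambda^{\#}\subseteq \Lambda = \{\hat\lambda\}$, this yields $\Lambda^{\#} = \{\hat\lambda\}$ and consequently $\mathrm{co}\,\Lambda^{\#} = \{\hat\lambda\}$. Theorem \ref{newNC} then requires the maximum of $\Omega_{\P}\lam$ over $\lambda \in G(\mathrm{co}\,\Lambda^{\#})$ to be nonnegative on $\P$; for this statement to be meaningful the set $G(\mathrm{co}\,\Lambda^{\#})$ cannot be empty, forcing $\hat\lambda \in G(\mathrm{co}\,\Lambda^{\#})$, that is $V\lamh \equiv 0$. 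The three equivalent formulations in (i) are then immediate from the definition $V = \tfrac12 (H_{vx} F_v - (H_{vx}F_v)^\top)$ and the identity $V_{ij} = p[f_i,f_j]^x$ recorded in \eqref{Vcrochet}.

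For (ii), having established (i), the term $\vb^\top V\lamh\, \yb$ vanishes in $\Omega_{\P}\lamh$, and Theorem \ref{newNC} reduces to
\[
g\lamh(\xib_0, \xib_T, \hb) + \int_0^T\!\Big[\tfrac12 \xib^\top H_{xx}\xib + \ub^\top H_{ux}\xib + \yb^\top M \xib + \tfrac12 \ub^\top H_{uu}\ub + \yb^\top E \ub + \tfrac12 \yb^\top R \yb\Big]\,dt \geq 0
\]
for every $(\xib, \ub, \yb, \hb) \in \P$. I would then extract the pointwise matrix inequality in \eqref{R2} by a standard concentration/Lebesgue-point argument: at a Lebesgue point $t_0 \in (0,T)$ of $H_{uu}, E, R$ and for arbitrary $u \in \cR^l$, $y \in \cR^m$, choose $\ub_\varepsilon$ equal to $u$ on a subinterval of length of order $\varepsilon$ inside $[t_0,t_0+\varepsilon]$ and zero elsewhere, and $\yb_\varepsilon$ a Lipschitz bump equal to $y$ on that subinterval and vanishing outside $[t_0,t_0+\varepsilon]$, so that $\yb_\varepsilon(0)=\yb_\varepsilon(T)=0$, hence $\hb=0$. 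With $\xib_0=0$, a Gronwall estimate gives $\|\xib_\varepsilon\|_\infty = O(\varepsilon)$, so all terms in the inequality involving $\xib$ or the endpoint form $g\lamh$ contribute $O(\varepsilon^2)$, while the pure $(\ub,\yb)$ terms give $\varepsilon\bigl(u^\top H_{uu}(t_0) u + 2 y^\top E(t_0) u + y^\top R(t_0) y\bigr) + o(\varepsilon)$. Dividing by $\varepsilon$ and letting $\varepsilon \downarrow 0$ yields the positive semidefiniteness claimed in (ii).

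The step I expect to be most delicate is verifying that the concentrated variation $(\xib_\varepsilon,\ub_\varepsilon,\yb_\varepsilon,0)$ actually lies in $\P$, since the linearized endpoint constraints \eqref{tlinearconseq}--\eqref{tlinearconsineq} need not be satisfied exactly: a priori only $\xib_\varepsilon(T) = O(\varepsilon)$ holds. The remedy is that uniqueness of the multiplier forces the endpoint equality constraints to be qualified: otherwise, as in Lemma \ref{degNC}, one could build a singular multiplier with $\alpha = 0$ and $p \equiv 0$, and the two opposite multipliers $\pm\lambda$ would contradict uniqueness. Surjectivity of $D\bar\eta$ then supplies a correction of $(\ub_\varepsilon,\yb_\varepsilon)$ supported near $T$ of norm $O(\varepsilon)$, restoring the equality constraints at cost $o(\varepsilon)$ in the quadratic form; the active inequality constraints are handled in the same way using the same surjectivity together with the smallness of $\xib_\varepsilon(T)$. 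Once the correction is in place, the concentration argument above proceeds routinely and (ii) follows.
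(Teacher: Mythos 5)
Your treatment of part (i) is exactly the paper's: Theorem \ref{newNC} forces $G(\mr{co}\,\Lambda^{\#})$ to be nonempty, and since $\mr{co}\,\Lambda^{\#}=\{\hat\lambda\}$ (by Corollary \ref{NCunique} together with uniqueness), the unique multiplier must satisfy $V\lamh\equiv 0$; the reformulations then follow from \eqref{SV} and \eqref{Vcrochet}. That part is correct.

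For part (ii) you take a genuinely different route (needle variations plus an endpoint correction), and the correction step has a real gap. Surjectivity of $D\bar\eta$ --- which does follow from uniqueness of the multiplier, as you argue via the construction in Lemma \ref{degNC} --- lets you restore the equality constraints \eqref{tlinearconseq} at cost $o(\varepsilon)$ in the quadratic form, but it gives no control over the values $D\varphi_i(\xh_0,\xh_T)(\xib_0,\xib_T+F_{v,T}\hb)$ for $i=0,\dots,d_\varphi$: after your correction these are still of size $O(\varepsilon)$ and of arbitrary sign, whereas membership in $\P$ requires them to be nonpositive (all the $\varphi_i$ are active by the paper's normalization, and the linearized cost $i=0$ is constrained as well). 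To push them below zero you would need a Slater-type critical direction along which every $D\varphi_i$ is strictly negative; such a direction need not exist --- precisely when some component $\hat\alpha_i$ of the unique multiplier is positive, every critical direction satisfies $D\varphi_i=0$, the cone degenerates into further equalities, and the joint surjectivity of $D\bar\eta$ together with those $D\bar\varphi_i$ is not implied by your hypotheses. So ``handled in the same way using the same surjectivity'' does not close the argument.

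The paper's own machinery avoids this issue entirely and is the intended proof: with $G(\mr{co}\,\Lambda^{\#})=\{\hat\lambda\}$, Theorem \ref{NCP2} gives $\Omega_{\P_2}\lamh\geq 0$ on the finite-faced cone $\P_2$; applying Lemma \ref{quadform} to the singleton family $\{\Omega_{\P_2}\lamh\}$ forces $\Omega_{\P_2}\lamh$ to be weakly lower semicontinuous on the ambient Hilbert space, and the Hestenes characterization (the exact analogue of Lemma \ref{Lemmawlsc}, with $(\ub,\yb)$ now playing the role of the control) identifies weak lower semicontinuity with positive semidefiniteness of the matrix in \eqref{R2}. This is the same two-step pattern already used to prove Corollary \ref{NCunique}, and it requires no qualification of the critical cone at all; I would replace the concentration argument by it.
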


Observe that for $\lambda\in
G(\mr{co}\,\Lambda^{\#}),$ the quadratic form
$\Omega\lam$ does not depend on $\vb$ since
its coefficients vanish. We can then consider its
continuous extension to $\P_2,$ given by
\be 
\label{OmegaP2}
\begin{split}
\Omega_{\P_2}\lam&(\xib,\ub,\yb,\hb):= g\lam
(\xib_0,\xib_T,\hb)
+ \ds \intT \left( \half\xib\,\tras H_{xx}\lam \xib + \ub\tras
F_u\lam\xib \right.\\
 &\left. +\, \yb\tras M\lam \xib + \half\ub\tras H_{uu}\lam
\ub + \yb\tras J\lam \ub + \half\yb\tras R\lam \yb
\right) \dtt,
\end{split}
\ee
where the involved matrices and $g$ were defined in and \eqref{M}-\eqref{g}. Observe that, since $V\lam \equiv 0,$ one has that $H_{vx}\lam F_v$ is symmetric and, therefore, 
\be
\label{Rij}
R_{ij}\lam=-p\sum_{k=0}^m \vh_k [f_j,[f_k,f_i]^x]^x-p\left(  2\frac{\pr f_i}{\pr x} \frac{\pr f_j}{\pr u} +\frac{\pr f_j}{\pr x} \frac{\pr f_i}{\pr u}+\frac{\pr^2 f_i}{\pr u\pr x} f_j \right) \dot{\uh},
\ee
for each $i,j=1,\dots,m.$

From Theorem \ref{newNC} and Lemma \ref{PdenseP2}, it follows:
\begin{theorem} 
\label{NCP2}
 If $\wh$ is a weak minimum of problem (P), then
\be
\label{Omegapos}
\max_{\lambda\in G(\mr{co}\,\Lambda^{\#})} 
\Omega_{\P_2} \lam (\xib,\ub,\yb,\hb) \geq 0,\quad
\text{on}\ \P_2.
\ee
\end{theorem}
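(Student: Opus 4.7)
The plan is to deduce Theorem \ref{NCP2} from Theorem \ref{newNC} (which gives the nonnegativity on the smaller cone $\P$) by a density argument based on Lemma \ref{PdenseP2}. The essential point making this extension possible is that for every $\lambda\in G(\mr{co}\,\Lambda^{\#})$ the matrix $V\lam$ vanishes identically on $[0,T]$, so the only $\vb$-dependent term in \eqref{OmegaP}, namely $\vb\tras V\lam\yb$, disappears. Hence, on $\P$, $\Omega_{\P}\lam(\xib,\ub,\yb,\dot\yb,\hb)$ coincides for such $\lambda$ with its continuous extension $\Omega_{\P_2}\lam(\xib,\ub,\yb,\hb)$ of \eqref{OmegaP2}, and Theorem \ref{newNC} rewrites as
\[
 \max_{\lambda\in G(\mr{co}\,\Lambda^{\#})}\Omega_{\P_2}\lam(\xib,\ub,\yb,\hb)\ \ge\ 0 \qquad\text{on }\P.
\]

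Next I would fix $(\xib,\ub,\yb,\hb)\in\P_2$ and approximate it by $(\xib_n,\ub_n,\yb_n,\hb_n)\in\P$ in the $\W_2\times\cR^m$-topology using Lemma \ref{PdenseP2}, select a maximizer $\lambda_n\in G(\mr{co}\,\Lambda^{\#})$ of the displayed inequality for each $n$, and then pass to the limit. To do so I need compactness of $G(\mr{co}\,\Lambda^{\#})$: $\Lambda^{\#}$ is closed in the finite-dimensional compact set $\Lambda\subset\cR^s$ of Theorem \ref{LambdaCompact} because the defining conditions $H_{uu}\lam\succeq 0$ and $H_{vu}\lam=0$ are continuous in $\lambda$; its convex hull in $\cR^s$ is compact by Carath\'eodory; and $G(\mr{co}\,\Lambda^{\#})$ is closed in it since $\lambda\mapsto V\lam$ is continuous. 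A subsequence of $\{\lambda_n\}$ then converges to some $\lambda^{\ast}\in G(\mr{co}\,\Lambda^{\#})$.

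The final step is to pass to the limit in $\Omega_{\P_2}[\lambda_n](\xib_n,\ub_n,\yb_n,\hb_n)\ge 0$ to obtain $\Omega_{\P_2}[\lambda^{\ast}](\xib,\ub,\yb,\hb)\ge 0$. For this I would check that the map $(\lambda,\xib,\ub,\yb,\hb)\mapsto\Omega_{\P_2}\lam(\xib,\ub,\yb,\hb)$ is jointly continuous on $\mr{co}\,\Lambda^{\#}\times(\W_2\times\cR^m)$; the required inequality \eqref{Omegapos} then follows. This joint continuity reduces to a uniform $L^\infty$-bound over $\lambda\in\mr{co}\,\Lambda^{\#}$ on the coefficients $H_{xx}\lam$, $H_{ux}\lam$, $M\lam$, $H_{uu}\lam$, $J\lam$, $R\lam$ and on $g\lam$, which follows from Assumptions \ref{regular} and \ref{SmoothControls} together with the linear continuous dependence of the adjoint state $p$ on $(\alpha,\beta)$.

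Verifying this joint continuity is the only genuinely technical obstacle of the plan, but it amounts to routine bookkeeping once the compactness of $\mr{co}\,\Lambda^{\#}$ is in hand; all the conceptual content has already been packaged into Theorem \ref{newNC} (removal of the $\vb$-dependence by restricting multipliers) and Lemma \ref{PdenseP2} (density), so the whole argument should fit in a few lines.
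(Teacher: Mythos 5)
Your proposal is correct and follows essentially the same route as the paper: the paper's proof of Theorem \ref{NCP2} is precisely the combination of Theorem \ref{newNC} with the density Lemma \ref{PdenseP2}, after observing (as the paper does just before defining $\Omega_{\P_2}$ in \eqref{OmegaP2}) that for $\lambda\in G(\mr{co}\,\Lambda^{\#})$ the $\vb$-dependent term vanishes so that $\Omega_{\P}$ admits a continuous extension to $\P_2$. The compactness of $G(\mr{co}\,\Lambda^{\#})$ and the joint continuity you verify are exactly the (implicit) ingredients needed to pass to the limit, in the same spirit as the proof of Theorem \ref{NCC2}.
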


\begin{remark}
 The latter optimality condition does not involve $\vb.$ It is stated in the variable $(\xib,\ub,\yb,\hb).$
\end{remark}


\section[Second order sufficient condition]{Second order sufficient condition  for weak minimum}\label{SectionSC}

This section provides a second order sufficient
condition for strict weak optimality. 
Its proof is an adaptation of the
proof of \cite[Theorem 5.5]{ABDL11} with important simplifications due to the absence of control constraints, but with some new difficulties owed to the presence of the nonlinear control variable.

Define the  \textit{$\gamma-$order} by
\be 
\gamma_\P(\xb_0,\ub,\yb,\hb):=|\xb_0|^2+ |\hb|^2+
\intT(|\ub_t|^2+|\yb_t|^2)\dtt,
\ee
for $(\xb_0,\ub,\yb,\hb)\in \cR^n\times\U_2\times \V_2 \times
\cR^{m}.$
It can also be considered as a function of
$(\xb_0,\ub,\vb)\in \cR^n\times \U_2\times \V_2$ by setting
\be 
\gamma(\xb_0,\ub,\vb):=
\gamma_\P(\xb_0,\ub,\yb,\yb_T),
\ee
with $\yb$ being the primitive of $\vb$ defined in
\eqref{Goht}.

\begin{definition}\label{qgdef}[$\gamma-$growth]
We say that $\wh$ satisfies the
{\em $\gamma-$growth condition in the
weak sense} if there exist $\eps,\rho\gr 0$ such
that 
\be \label{qg}
J(w) \geq J(\wh) + \rho \gamma(x_0-\xh_0,u-\uh,v-\vh),
\ee 
for every feasible trajectory $w$ with $\|w-\wh\|_{\infty} \mi \eps.$
 
\end{definition}


\begin{theorem}[Sufficient condition for weak optimality]\label{SC}
\begin{itemize}
\item[(i)]
Assume that there exists $\rho \gr 0$ such that
\be
\label{unifpos}
\max_{\lambda\in G(\mr{co}\,\Lambda^{\#})} 
\Omega_{\P_2} \lam (\xib,\ub,\yb,\hb) \geq
\rho\gamma_\P(\xib_0,\ub,\yb,\hb),\quad \text{for all}\ (\xib,\ub,\yb,\hb)\in\P_2.
\ee
Then $\wh$ is a weak minimum satisfying 
$\gamma-$ growth in the weak sense.
\item[(ii)] Conversely, if $\wh$ is a weak solution satisfying $\gamma-$growth in the weak sense and such that $\alpha_0>0$ for every $\lambda\in G(\mr{co}\,\Lambda^{\#}),$ then \eqref{unifpos} holds for some $\rho>0.$

\end{itemize}
\end{theorem}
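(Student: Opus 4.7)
The plan is to prove (i) by contradiction, using the Lagrangian expansion from Lemma \ref{expansionlagrangian} combined with Goh's transformation (Theorem \ref{Omegat}) and a weak compactness argument in $L^2$. Part (ii) is then obtained by a converse construction based on the qualification $\alpha_0>0$.

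\textbf{Part (i).} Suppose the conclusion fails. Then there is a sequence of feasible trajectories $w_k=(x_k,u_k,v_k)$ with $\delta w_k:=w_k-\wh\to 0$ in $\W$ and
\begin{equation*}
J(w_k)-J(\wh) < \tfrac{1}{k}\,\gamma_k,\qquad \gamma_k:=\gamma(\delta x_{k,0},\delta u_k,\delta v_k)>0.
\end{equation*}
Fix any $\lambda\in G(\mr{co}\,\Lambda^{\#})$. Since $\wh$ is feasible with $\varphi_i(\xh_0,\xh_T)=0$ and $w_k$ is feasible (so $\alpha_i\varphi_i(x_{k,0},x_{k,T})\le 0$ and $\eta_j(x_{k,0},x_{k,T})=0$), the expansion \eqref{expansionLag} gives
\begin{equation*}
\Omega\lam(\delta w_k)+\omega\lam(\delta w_k)+\R(\delta w_k)\le \alpha_0[J(w_k)-J(\wh)]\le \alpha_0\gamma_k/k.
\end{equation*}
Apply Goh's transformation \eqref{Goht} to $\delta w_k$ to get $(\xib_k,\delta u_k,\yb_k,\hb_k)$ with $\hb_k=\yb_{k,T}$, and normalize by $\sqrt{\gamma_k}$ to obtain $(\bar\xib_k,\bar u_k,\bar y_k,\bar h_k)$ with $\gamma_\P(\bar\xib_{k,0},\bar u_k,\bar y_k,\bar h_k)=1$. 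By Theorem \ref{Omegat} and the vanishing of the $\vb$-term for $\lambda\in G(\mr{co}\,\Lambda^{\#})$, the leading quadratic part in the normalized inequality is exactly $\Omega_{\P_2}\lam(\bar\xib_k,\bar u_k,\bar y_k,\bar h_k)$.

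\textbf{Passage to the limit.} The sequence $(\bar u_k,\bar y_k,\bar h_k)$ is bounded in $\U_2\times\V_2\times\cR^m$, hence admits a weakly convergent subsequence with limit $(\bar u,\bar y,\bar h)$. Because $\bar\xib_k$ solves the linear ODE \eqref{xieq} driven by $(\bar u_k,\bar y_k)$ with initial value $\bar\xib_{k,0}$ (also bounded), standard ODE continuity promotes this to uniform convergence of $\bar\xib_k$ to some $\bar\xib$. The linearized endpoint constraints \eqref{tlinearconseq}--\eqref{tlinearconsineq} pass to the limit, so $\bar w:=(\bar\xib,\bar u,\bar y,\bar h)\in\P_2$. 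Dividing by $\gamma_k$ and using the $L^\infty$-smallness of $\delta w_k$ to dispose of $\omega\lam$ and $\R$, one obtains
\begin{equation*}
\limsup_{k\to\infty}\Omega_{\P_2}\lam(\bar\xib_k,\bar u_k,\bar y_k,\bar h_k)\le 0.
\end{equation*}
Since $\lambda\in\Lambda^{\#}$, Lemma \ref{Lambdawlsc} together with the convex structure of $G(\mr{co}\,\Lambda^{\#})$ gives weak lower semicontinuity of $\Omega_{\P_2}\lam$, whence $\Omega_{\P_2}\lam(\bar w)\le 0$ for every $\lambda\in G(\mr{co}\,\Lambda^{\#})$. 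By the standing assumption \eqref{unifpos}, this forces $\gamma_\P(\bar\xib_0,\bar u,\bar y,\bar h)=0$, i.e. $\bar w=0$.

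\textbf{Main obstacle: strong convergence.} The tension is that $\gamma_\P(\bar\xib_{k,0},\bar u_k,\bar y_k,\bar h_k)=1$ while the weak limit is zero, so the weak convergence must actually be shown to be strong (and hence contradictory). The argument relies on a Hestenes-style lower semicontinuity estimate: pick a $\lambda\in G(\mr{co}\,\Lambda^{\#})$ realizing the maximum in \eqref{unifpos} up to $\rho/2$, and use positive semidefiniteness of $H_{uu}\lam$ and of the block in \eqref{R2} (which follows from Theorem \ref{newNC} after restricting to $V\lam\equiv 0$) to bound $\|\bar u_k\|_2^2+\|\bar y_k\|_2^2$ above by $\Omega_{\P_2}\lam(\bar w_k)$ plus a weakly continuous term in $\bar\xib_k$, $\bar\xib_{k,0}$ and $\bar h_k$ that converges strongly. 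Combined with the upper bound $\Omega_{\P_2}\lam(\bar w_k)\to 0$ and $\bar\xib_{k,0}\to 0$, $\bar h_k\to 0$, this yields $\|\bar u_k\|_2+\|\bar y_k\|_2\to 0$, contradicting $\gamma_\P=1$. This is the only delicate step; everything else is routine.

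\textbf{Part (ii).} Assume $\gamma$-growth and $\alpha_0>0$ on $G(\mr{co}\,\Lambda^{\#})$. Given any $(\xib,\ub,\yb,\hb)\in\P_2$, approximate it in $\W_2\times\cR^m$ by elements of $\P$ using Lemma \ref{PdenseP2}, invert Goh's transformation to recover $(\xb_0,\ub,\vb)\in\C$, and construct a family $\{w_\sigma\}_{\sigma>0}$ of feasible trajectories of the form $w_\sigma=\wh+\sigma(\xb,\ub,\vb)+o(\sigma)$ by the implicit function theorem applied to the equality constraint mapping (as in the proof of Proposition \ref{dualpb}). Apply the $\gamma$-growth inequality to $w_\sigma$, expand using Lemma \ref{expansionlagrangian} and Theorem \ref{Omegat}, divide by $\sigma^2$ and let $\sigma\to 0$: because $\alpha_0>0$ uniformly on the (compact) set $G(\mr{co}\,\Lambda^{\#})$, the resulting inequality gives $\max_\lambda \Omega_{\P_2}\lam(\xib,\ub,\yb,\hb)\ge \rho'\gamma_\P(\xib_0,\ub,\yb,\hb)$ for some $\rho'>0$ independent of the direction, which is \eqref{unifpos}. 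The compactness of $G(\mr{co}\,\Lambda^{\#})$ and the linear-continuity of $\lambda\mapsto\Omega_{\P_2}\lam$ are used to make the constant $\rho'$ uniform.
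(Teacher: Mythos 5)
Your overall architecture for part (i) --- argue by contradiction, normalize the increments by $\sqrt{\gamma_k}$, extract a weak limit, show it lies in $\P_2$, and use the Lagrangian expansion to get $\liminf_k\Omega_{\P_2}\lam(\bar w_k)\le 0$ --- coincides with the paper's. The genuine gap is precisely in the step you yourself flag as the main obstacle. You propose to upgrade weak to strong convergence by bounding $\|\bar u_k\|_2^2+\|\bar y_k\|_2^2$ from above by $\Omega_{\P_2}\lam(\bar w_k)$ plus weakly continuous terms, invoking positive \emph{semi}definiteness of $H_{uu}\lam$ and of the matrix in \eqref{R2}. This cannot work: semidefiniteness only gives a lower bound of $0$ for the purely-control part of $\Omega_{\P_2}\lam$, not a lower bound of the form $c\left(\|\ub\|_2^2+\|\yb\|_2^2\right)$, so no upper bound on the control norms follows (take $H_{uu}=E=R=0$, which is semidefinite, and the estimate collapses). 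What you would actually need is \emph{uniform} positive definiteness of \eqref{R2} for one fixed multiplier; with several multipliers \eqref{unifpos} does not supply such a $\lambda$, and even the semidefiniteness of \eqref{R2} is established in the paper (Corollary \ref{CoroCBsym}) only under uniqueness of the multiplier. The paper sidesteps strong convergence entirely: it applies Lemma \ref{quadform} to the family $\Omega_{\P_2}\lam-\rho\gamma_\P$ to restrict the maximum in \eqref{unifpos} to the subset of multipliers for which $\Omega_{\P_2}\lam-\rho\gamma_\P$ is weakly l.s.c.; picking $\tilde\lambda$ in that subset attaining the maximum at the weak limit $\tilde w$ yields $0\le\Omega_{\P_2}[\tilde\lambda](\tilde w)-\rho\gamma_\P(\tilde w)\le\liminf_k\Omega_{\P_2}[\tilde\lambda](\bar w_k)-\rho\le-\rho$, a contradiction, with no need to decide whether $\bar w_k$ converges strongly. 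You should replace your coercivity step by this argument (or prove a genuine Legendre-form property, which is a strictly stronger input than the semidefiniteness you cite).

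Part (ii) is also underspecified. From a one-parameter family $w_\sigma$ and the $\gamma$-growth inequality you can only extract, after dividing by $\sigma^2$, an inequality tied to a single multiplier through the Lagrangian; producing the maximum over $G(\mr{co}\,\Lambda^{\#})$ in \eqref{unifpos} requires the full duality machinery behind the second order necessary conditions, which your sketch omits. The paper's route packages this correctly: $\gamma$-growth makes $\wh$ a weak minimum of the modified problem with cost $J(w)-\rho'\gamma(x_0-\xh_0,w-\wh)$; rewriting it in Mayer form and applying Theorem \ref{NCP2} to it gives a second variation equal to $\Omega_{\P_2}\lam-\alpha_0\rho'\gamma_\P$, and $\rho:=\min_{G(\mr{co}\,\Lambda^{\#})}\alpha_0\rho'>0$ yields \eqref{unifpos}. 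I recommend you adopt that reduction rather than redo the perturbation argument from scratch.
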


\begin{corollary}
\label{CoroSC}
If $\wh$ satisfies \eqref{unifpos} and it has a unique associated multiplier, then necessarily the matrix in \eqref{R2} is uniformly positive definite, i.e.
\be
\label{R22}
\begin{pmatrix}
H_{uu} & E^\top \\
E & R
\end{pmatrix}
\succeq \rho I,\quad \text{on} \ [0,T],
\ee
where $I$ refers to the identity matrix.
\end{corollary}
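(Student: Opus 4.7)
The plan is to reduce \eqref{unifpos} to an inequality involving only the quadratic form in $(\ub,\yb)$ and then extract the pointwise estimate \eqref{R22} by a Lebesgue-point argument, in the spirit of the classical Legendre--Clebsch proof.

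First I would observe that the hypothesis collapses to a single multiplier. Since $\Lambda=\{\hat\lambda\}$ one has $G(\mr{co}\,\Lambda^{\#})\subseteq\mr{co}\,\Lambda^{\#}\subseteq\mr{co}\,\Lambda=\{\hat\lambda\}$, and this set cannot be empty because otherwise the supremum in \eqref{unifpos} would be $-\infty$. Thus $G(\mr{co}\,\Lambda^{\#})=\{\hat\lambda\}$, so in particular $V[\hat\lambda]\equiv 0$ and $R[\hat\lambda]$ has the simplified form \eqref{Rij}. The inequality \eqref{unifpos} reads
\benl
\Omega_{\P_2}[\hat\lambda](\xib,\ub,\yb,\hb)\geq\rho\,\gamma_{\P}(\xib_0,\ub,\yb,\hb),\quad \text{for all}\ (\xib,\ub,\yb,\hb)\in\P_2.
\eenl

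Next I would localize. Fix a Lebesgue point $t_0\in(0,T)$ of the bounded matrix-valued maps $H_{uu}[\hat\lambda]$, $E[\hat\lambda]$ and $R[\hat\lambda]$, pick arbitrary $(u_0,y_0)\in\cR^l\times\cR^m$, and for small $\eps>0$ set
\benl
\ub^\eps:=\frac{u_0}{\sqrt\eps}\,\chi_{[t_0,t_0+\eps]},\quad \yb^\eps:=\frac{y_0}{\sqrt\eps}\,\chi_{[t_0,t_0+\eps]},\quad \hb^\eps:=0,
\eenl
and let $\xib^\eps$ solve \eqref{xieq} with some initial value $\xib^\eps_0\in\cR^n$. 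Standard ODE estimates give $\|\xib^\eps-\Phi(\cdot,0)\xib^\eps_0\|_{\infty}=O(\sqrt\eps)$, where $\Phi$ is the fundamental matrix of $\dot z=F_x\,z$. I would then choose $\xib^\eps_0$ of size $O(\sqrt\eps)$ so that the quadruple $(\xib^\eps,\ub^\eps,\yb^\eps,0)$ satisfies the linearized endpoint relations \eqref{tlinearconseq}--\eqref{tlinearconsineq} and hence lies in $\P_2$; this is made possible by the qualification of the endpoint constraints. Plugging such a family into $\Omega_{\P_2}[\hat\lambda]$, every term that involves $\xib^\eps$ or the endpoint piece $g[\hat\lambda](\xib^\eps_0,\xib^\eps_T,0)$ is bounded in absolute value by $C\eps$ since $\|\xib^\eps\|_{\infty}=O(\sqrt\eps)$ and $\|\ub^\eps\|_1,\|\yb^\eps\|_1=O(\sqrt\eps)$. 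Consequently
\benl
\Omega_{\P_2}[\hat\lambda](\xib^\eps,\ub^\eps,\yb^\eps,0)=\frac{1}{\eps}\!\int_{t_0}^{t_0+\eps}\!\left[\tfrac12 u_0\tras H_{uu}u_0+y_0\tras E u_0+\tfrac12 y_0\tras R y_0\right]\!\dtt+O(\sqrt\eps),
\eenl
while $\gamma_{\P}(\xib^\eps_0,\ub^\eps,\yb^\eps,0)=|u_0|^2+|y_0|^2+O(\eps)$. Sending $\eps\to 0$ and invoking the Lebesgue differentiation theorem yields
\benl
\tfrac12(u_0,y_0)\begin{pmatrix} H_{uu}(t_0) & E(t_0)\tras \\ E(t_0) & R(t_0) \end{pmatrix}\!\!\begin{pmatrix} u_0 \\ y_0 \end{pmatrix}\geq\rho(|u_0|^2+|y_0|^2),
\eenl
which, holding for almost every $t_0$ and every $(u_0,y_0)$, is exactly \eqref{R22} (in fact with $2\rho$ in place of $\rho$).

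The main obstacle I expect is the endpoint correction step: one must verify that the value $\xib^\eps_T$ generated by the localized interior variation can genuinely be annihilated (up to the linearized equality and inequality constraints) by an $O(\sqrt\eps)$ adjustment of $\xib^\eps_0$---possibly together with negligible modifications of $\ub^\eps,\yb^\eps,\hb^\eps$---so that the resulting quadruple belongs to the critical cone $\P_2$ without corrupting the leading behavior of $(\ub^\eps,\yb^\eps)$. This is precisely where qualification of the endpoint constraints enters; once this is settled, the rest of the argument is the routine Legendre--Clebsch localization recalled above.
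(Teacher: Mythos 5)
The paper states Corollary \ref{CoroSC} without any proof, so there is no argument of the author's to compare yours against; your localization is the standard route and its core is sound. With a unique multiplier, $G(\mr{co}\,\Lambda^{\#})$ reduces to $\{\hat\lambda\}$ (nonempty, else \eqref{unifpos} fails), the needle variations $\ub^\eps,\yb^\eps$ concentrate the quadratic form on the block matrix of \eqref{R2}, every term containing $\xib^\eps$ or the endpoint form $g$ is $O(\eps)$, and since by Assumptions \ref{regular} and \ref{SmoothControls} the coefficients $H_{uu},E,R$ are continuous in $t$, the pointwise bound holds everywhere on $[0,T]$ and not merely at Lebesgue points.

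The one step that does not work as written is the endpoint correction. First, Assumption \ref{cq} is only introduced in Section \ref{SectionShoot}, after the corollary, so qualification cannot simply be invoked; it does in fact follow from uniqueness of the multiplier (if $D\bar\eta(\xh_0,\uh,\vh)$ were not onto, the construction in Lemma \ref{degNC} produces two distinct multipliers $\pm\lambda$), but that needs saying. Second, and more seriously, surjectivity of $D\bar\eta$ only lets you restore the linearized \emph{equalities}; after an $O(\sqrt\eps)$ adjustment the quantities $D\varphi_i(\xh_0,\xh_T)(\xib^\eps_0,\xib^\eps_T+F_{v,T}\hb^\eps)$ are still $O(\sqrt\eps)$ of arbitrary sign, so membership in $\P_2$ is not guaranteed. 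The clean repair avoids qualification altogether: keep $\xib^\eps_0=0$ and $\hb^\eps=0$, and observe that $\xib^\eps_T$ lies in the reachable subspace of the linear system \eqref{xieq} started at $0$; hence there is a correcting control $(\delta\ub^\eps,\delta\yb^\eps)$ of $L^2$-norm $O(|\xib^\eps_T|)=O(\sqrt\eps)$ steering the linearized state back to $\xib_T=0$. The corrected direction has endpoint image $(0,0)$, which satisfies \eqref{tlinearconseq}--\eqref{tlinearconsineq} trivially and so lies in $\P_2$; since $\Omega_{\P_2}[\hat\lambda]$ and $\gamma_\P$ are continuous quadratic forms of $(\xib_0,\ub,\yb,\hb)$, the correction perturbs both sides of \eqref{unifpos} only by $O(\sqrt\eps)$, and your limit passage then gives \eqref{R22} (with $2\rho$ in place of $\rho$, which is immaterial).
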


\begin{remark}
Another consequence of the condition \eqref{unifpos} is stated in Remark \ref{RemUP} afterwards, where we link it with the {\it strengthened generalized Legendre-Clebsch condition.}
\end{remark}

The remainder of this section is devoted to the proof of Theorem \ref{SC}. We shall start by establishing some technical results that will be needed for the main result. Recall first the following classical result for ordinary differential equations.

\begin{lemma}[Gronwall's Lemma]
\label{GronLem}
Let $a\in W^{1,1}(0,T;\cR^n),$ $b\in L^1(0,T)$ and $c\in L^1(0,T)$ be such that $|\dot{a}_t|\leq b_t + c_t |a_t|$ for a.a. $t\in (0,T).$ Then
\benl
\|a\|_\infty \leq e^{\|c\|_1}\big(|a_0|+\|b\|_1 \big).
\eenl
\end{lemma}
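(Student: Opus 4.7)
The plan is to reduce the differential inequality to an integral inequality and then run the standard integrating-factor argument. First, since $a \in W^{1,1}(0,T;\cR^n)$ is absolutely continuous, I can write $a_t = a_0 + \int_0^t \dot{a}_s\,\dd s$, whence $|a_t| \leq |a_0| + \int_0^t |\dot{a}_s|\,\dd s$. Plugging in the hypothesis $|\dot{a}_s| \leq b_s + c_s|a_s|$ (and, if needed, replacing $b_s$, $c_s$ by $|b_s|$, $|c_s|$, which only strengthens the bound and does not affect their $L^1$ norms), I obtain the integral inequality
\benl
|a_t| \;\leq\; |a_0| + \|b\|_1 + \int_0^t c_s\,|a_s|\,\dd s, \qquad t \in [0,T].
\eenl

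Next, I set $\phi(t) := |a_0| + \|b\|_1 + \int_0^t c_s\,|a_s|\,\dd s$. Then $\phi$ is absolutely continuous on $[0,T]$ with $\phi(0) = |a_0|+\|b\|_1$, satisfies $|a_t|\leq \phi(t)$, and $\dot{\phi}(t) = c_t|a_t| \leq c_t\phi(t)$ a.e. Multiplying by the integrating factor $\mu(t) := \exp\!\bigl(-\int_0^t c_s\,\dd s\bigr)$, which is well-defined and absolutely continuous because $c \in L^1(0,T)$, one finds $\frac{\dd}{\dd t}\bigl(\mu(t)\phi(t)\bigr) \leq 0$ a.e. Integrating from $0$ to $t$ yields $\mu(t)\phi(t) \leq \phi(0)$, i.e.
\benl
|a_t| \;\leq\; \phi(t) \;\leq\; \bigl(|a_0| + \|b\|_1\bigr) \exp\!\left(\int_0^t c_s\,\dd s\right) \;\leq\; \bigl(|a_0| + \|b\|_1\bigr)\, e^{\|c\|_1}.
\eenl
Taking the supremum over $t\in[0,T]$ gives the claim.

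There is no genuinely hard step here: this is a standard Gronwall-type estimate, and the only point requiring mild care is to justify the integrating-factor computation under the mere $L^1$ regularity of $c$, which follows from the fundamental theorem of calculus for absolutely continuous functions. The result is recorded at this stage because it will be invoked repeatedly to control variations of the state in the forthcoming proof of Theorem \ref{SC}, where one typically has a linearized or perturbed equation whose right-hand side admits a bound of the form $b_t + c_t|a_t|$ with $b, c$ only in $L^1$ (in fact, $L^\infty$ in our setting).
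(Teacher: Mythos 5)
Your proof is correct and is the standard integrating-factor argument; the paper itself simply recalls this as a classical result and gives no proof, so there is nothing to compare against. The one point needing care — that $b$ and $c$ are not assumed nonnegative, and that the product rule applies to the absolutely continuous functions $\mu$ and $\phi$ with $c$ merely in $L^1$ — is handled adequately in your write-up.
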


For the lemma below recall the definition of the space $\H_2$ in \eqref{H2}.

\begin{lemma}
\label{lemmaxbar}
There exists $\rho\gr 0$ such that 
\be
\label{xbargamma}
|\xb_0|^2+\|\xb\|_2^2+|\xb_T|^2\leq \rho
\gamma(\xb_0,\ub,\vb),
\ee
for every linearized trajectory $(\xb,\ub,\vb)\in\H_2.$ The constant $\rho$ depends on $\|A\|_{\infty},$
$\|F_v\|_{\infty},$ $\|E\|_{\infty}$ and
$\|B\|_{\infty}.$
\end{lemma}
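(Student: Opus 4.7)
The plan is to use Goh's transformation to replace $\xb$ by $\xib := \xb - F_v\,\yb$, apply Gronwall's Lemma to the transformed equation (whose forcing involves $\ub$ and $\yb$ but \emph{not} $\vb$), and then revert to $\xb$. This detour is essential because $\gamma(\xb_0,\ub,\vb)$ controls $\|\yb\|_2$ and $|\yb_T|$ but no norm of $\vb$ itself; a direct Gronwall estimate on \eqref{lineareq} would produce a term proportional to $\|\vb\|_1$, which $\gamma$ does not dominate. Once the transformation is in place, the remainder is a textbook energy estimate, so I expect the only delicate moment to be recognising that Goh's change of variables is precisely what makes $\gamma$ an admissible right-hand side.

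Concretely, I would first set $\yb_t := \int_0^t \vb_s\,\dd s$ and $\xib_t := \xb_t - F_{v,t}\yb_t$. By \eqref{xieq}, $\xib$ solves $\dot\xib_t = F_{x,t}\xib_t + F_{u,t}\ub_t + B_t\yb_t$ with $\xib_0=\xb_0$, so that a.e.\ on $[0,T]$,
\benl
|\dot\xib_t| \;\le\; \|F_x\|_\infty\,|\xib_t| \,+\, \|F_u\|_\infty\,|\ub_t| \,+\, \|B\|_\infty\,|\yb_t|.
\eenl
Applying Gronwall's Lemma \ref{GronLem} with $c_t = \|F_x\|_\infty$ and $b_t = \|F_u\|_\infty|\ub_t| + \|B\|_\infty|\yb_t|$, then Cauchy--Schwarz ($\|\ub\|_1\le\sqrt{T}\|\ub\|_2$, $\|\yb\|_1\le\sqrt{T}\|\yb\|_2$) and $(a+b+c)^2\le 3(a^2+b^2+c^2)$, I obtain a constant $C_1$ depending only on $T$, $\|F_x\|_\infty$, $\|F_u\|_\infty$ and $\|B\|_\infty$ such that
\benl
\|\xib\|_\infty^2 \;\le\; C_1\bigl(|\xb_0|^2 + \|\ub\|_2^2 + \|\yb\|_2^2\bigr).
\eenl

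Second, I invert the transformation via $\xb_t = \xib_t + F_{v,t}\yb_t$. Using $|a+b|^2\le 2|a|^2 + 2|b|^2$ and bounding $\|\xib\|_2^2\le T\|\xib\|_\infty^2$, this yields
\benl
\|\xb\|_2^2 \;\le\; 2T\,\|\xib\|_\infty^2 + 2\,\|F_v\|_\infty^2\,\|\yb\|_2^2,\qquad |\xb_T|^2 \;\le\; 2\,\|\xib\|_\infty^2 + 2\,\|F_v\|_\infty^2\,|\yb_T|^2.
\eenl
Adding these bounds to the trivial identity $|\xb_0|^2\le\gamma(\xb_0,\ub,\vb)$ and recalling that
$\gamma(\xb_0,\ub,\vb) = |\xb_0|^2 + |\yb_T|^2 + \|\ub\|_2^2 + \|\yb\|_2^2$, the inequality \eqref{xbargamma} follows with a constant $\rho$ depending on $T$ together with the norms listed in the statement. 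No further ingredient is needed, and the conclusion holds for every $(\xb,\ub,\vb)\in\H_2$ since $\xib$ is automatically in $W^{1,2}$ under our standing regularity hypotheses.
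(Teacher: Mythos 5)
Your proof is correct and follows essentially the same route as the paper's: Goh's transformation to pass from $\xb$ to $\xib$, Gronwall plus Cauchy--Schwarz on the transformed equation \eqref{xieq}, and then inversion of the transformation with an elementary quadratic inequality (the paper uses Young's inequality where you use $|a+b|^2\le 2|a|^2+2|b|^2$, which is the same estimate). Your opening remark explaining \emph{why} the detour through Goh's transformation is unavoidable is a helpful addition, but the argument itself is the paper's.
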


\begin{proof}
Throughout this proof, whenever we put $\rho_i$ we refer to a positive constant depending on $\|A\|_{\infty},$
$\|F_v\|_{\infty},$ $\|E\|_{\infty},$ and/or
$\|B\|_{\infty}.$
Let $(\xb,\ub,\vb)\in\H_2$ and 
$(\xib,\yb)$ be defined by Goh's Transformation \eqref{Goht}.
 Thus $(\xib,\ub,\yb)$ is solution of \eqref{xieq}. Gronwall's Lemma and
Cauchy-Schwarz inequality yield
\be\label{lemmazxit}
\|\xib\|_{\infty}\leq \rho_1
(|\xib_0|^2+\|\ub\|_2^2+\|\yb\|_2^2)^{1/2}\leq
\rho_1 \gamma_\P(\xb_0,\ub,\yb,\yb_T)^{1/2},
\ee
with
$\rho_1=\rho_1(\|A\|_1,\|E\|_{\infty},\|B\|_{\infty}).$
This last inequality together with the relation between $\xib$ and $\xb$ provided by \eqref{Goht}
imply
\be\label{lemmazz}
\|\xb\|_2\leq \|\xib\|_2+\|F_v\|_{\infty}\|\yb\|_2\leq\rho_2
\gamma_\P(\xb_0,\ub,\yb,\yb_T)^{1/2},
\ee
for $\rho_2=\rho_2(\rho_1,\|F_v\|_{\infty}).$
On the other hand,  \eqref{Goht} and estimate
\eqref{lemmazxit} lead to
\benl
|\xb_T|\leq |\xib_T|+\|F_v\|_{\infty}|\yb_T|\leq
\rho_1 \gamma_\P(\xb_0,\ub,\yb,\yb_T)^{1/2}
+\|F_v\|_{\infty}|\yb_T|.
\eenl
Then, in view of Young's inequality `$2{ab}\leq {a^2+b^2}
$' for real numbers $a,b,$ one gets 
\be\label{lemmazzT}
|\xb_T|^2\leq
\rho_3 \gamma_\P(\xb_0,\ub,\yb,\yb_T),
\ee
for some
$\rho_3=\rho_3(\rho_1,\|F_v\|_{\infty}).$
The desired estimate follows from \eqref{lemmazz}
and \eqref{lemmazzT}.
\end{proof}

Note that Lemma \ref{lemmaxbar} above gives an estimate of the linearized state in the order $\gamma.$ The following result shows that the analogous property holds for the variation of the state variable as well. 

\begin{lemma} \label{lemmadeltax}
Given $C\gr 0,$ there exists $\rho\gr 0$ such that 
\benl
|\delta x_0|^2+\|\delta x\|^2_2+|\delta x_T|^2\leq \rho
\gamma(\delta x_0,\delta u,\delta v),
\eenl
for every $w=(x,u,v)$ solution of the state equation \eqref{stateeq} having $\|v\|_2\leq C,$ and where  $\delta w:=w-\wh.$ The constant $\rho$ depends on $C,$ $\|B\|_{\infty},$ $\|\dot B\|_{\infty}$ and the Lipschitz constants of $f_i.$
\end{lemma}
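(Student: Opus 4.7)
The plan is to reproduce, in the nonlinear setting of the state equation \eqref{stateeq}, the strategy used for the linearized equation in Lemma \ref{lemmaxbar}: apply Goh's transformation to $\delta x$, bound the transformed variable by Gronwall's lemma (Lemma \ref{GronLem}), then recover the bound on $\delta x$ from $\delta x = \delta\xi + F_v\,\delta y$. The extra work compared with the linear case consists in controlling the second-order remainder introduced by the nonlinearity of the $f_i$ in $(x,u)$.

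Concretely, I would first perform a Taylor expansion of each $f_i$ around $(\hat x_t,\hat u_t)$ and use the affine structure in $v$ to rewrite the state equation as
\[
\dot{\delta x}_t = F_{x,t}\,\delta x_t + F_{u,t}\,\delta u_t + F_{v,t}\,\delta v_t + r_t,
\]
with $|r_t| \le L\|\hat v\|_\infty(|\delta x_t|^2 + |\delta u_t|^2) + L|\delta v_t|(|\delta x_t|+|\delta u_t|)$, where $L$ denotes (uniform) Lipschitz constants of the $f_i$ and their derivatives. Setting $\delta y_t := \int_0^t \delta v_s\,ds$ and $\delta\xi_t := \delta x_t - F_{v,t}\,\delta y_t$, the same computation leading to \eqref{xieq} gives
\[
\dot{\delta\xi}_t = F_{x,t}\,\delta\xi_t + F_{u,t}\,\delta u_t + B_t\,\delta y_t + r_t, \qquad \delta\xi_0 = \delta x_0,
\]
with $B$ as in \eqref{B1} (bounded, together with $\dot B$, by Assumption \ref{SmoothControls}).

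Applying Lemma \ref{GronLem} to this integral equation would yield
\[
\|\delta\xi\|_\infty \le e^{T\|F_x\|_\infty}\Bigl[|\delta x_0| + \|F_u\|_\infty\sqrt{T}\,\|\delta u\|_2 + \|B\|_\infty\sqrt{T}\,\|\delta y\|_2 + \|r\|_1\Bigr].
\]
The first three terms are already of order $\gamma^{1/2}$. For the remainder, Cauchy-Schwarz gives
\[
\|r\|_1 \le L\|\hat v\|_\infty(\|\delta x\|_2^2+\|\delta u\|_2^2) + L\|\delta v\|_2(\|\delta x\|_2+\|\delta u\|_2),
\]
and by hypothesis $\|\delta v\|_2 \le C + \|\hat v\|_\infty\sqrt{T}$ is finite. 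Using $\delta x = \delta\xi + F_v\delta y$ to bound $\|\delta x\|_2 \le \sqrt T\|\delta\xi\|_\infty + \|F_v\|_\infty\|\delta y\|_2$, together with the a priori bound $\|\delta x\|_\infty \le M_0(C)$ obtained from a direct Gronwall estimate on the original state equation (which linearizes the quadratic-in-$\delta x$ piece of $r$), one arrives at an inequality of the form
\[
\|\delta\xi\|_\infty \le K_1\,\gamma^{1/2} + K_2\sqrt{T}\,\|\delta\xi\|_\infty,
\]
where $K_1,K_2$ depend only on $C$ and on $\|B\|_\infty$, $\|\dot B\|_\infty$, $\|F_x\|_\infty$, $\|F_u\|_\infty$, $\|F_v\|_\infty$ and the Lipschitz constants of the $f_i$.

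The main obstacle is that $K_2\sqrt{T}$ need not be smaller than $1$, since $K_2$ inherits the (possibly large) factor $\|\delta v\|_2$ coming from the mixed term $\int|\delta v|(|\delta x|+|\delta u|)\,ds$. To overcome this I would partition $[0,T]$ into $N$ consecutive subintervals $[t_i,t_{i+1}]$ of length $\delta$ small enough that the analogous constant $K_2\sqrt{\delta}$ on each subinterval is $\le 1/2$; the same one-interval estimate then yields $|\delta\xi_{t_{i+1}}| \le 2|\delta\xi_{t_i}| + \text{local }\gamma^{1/2}\text{-terms}$. Unrolling this $N$-fold recursion and using Cauchy-Schwarz to assemble the $L^2$-contributions from each subinterval (at the cost of a factor $\sqrt{N}$) gives $\|\delta\xi\|_\infty^2 \le \rho_1\,\gamma(\delta x_0,\delta u,\delta v)$ for some $\rho_1$ depending on $C$ and the listed constants. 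Combining this with
\[
\|\delta x\|_2^2 \le 2T\|\delta\xi\|_\infty^2 + 2\|F_v\|_\infty^2\|\delta y\|_2^2,\qquad |\delta x_T|^2 \le 2|\delta\xi_T|^2 + 2\|F_v\|_\infty^2|\delta y_T|^2,
\]
yields the desired estimate $|\delta x_0|^2 + \|\delta x\|_2^2 + |\delta x_T|^2 \le \rho\,\gamma(\delta x_0,\delta u,\delta v)$.
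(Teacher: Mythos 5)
Your overall architecture (Goh transformation of $\delta x$, Gronwall, then recovering $\delta x=\delta\xi+F_v\,\delta y$) matches the paper's, but the way you linearize the dynamics creates a gap that your patching arguments do not close. By Taylor-expanding $f_i$ to second order you produce a remainder $r$ containing the genuinely quadratic terms $(1+|v|)(|\delta x|^2+|\delta u|^2)$. These are of order $\gamma$, not $\gamma^{1/2}$: already $\int_0^T|\delta u|^2\,\dtt\le\gamma$ contributes a term of size $\gamma$ to $\|r\|_1$, so the best you can conclude is $\|\delta\xi\|_\infty\le K(\gamma^{1/2}+\gamma)$, i.e.\ $\|\delta x\|_2^2\le \rho(\gamma+\gamma^2)$. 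The lemma, however, asserts the bound $\rho\,\gamma$ uniformly over all trajectories with $\|v\|_2\le C$ --- there is no smallness or boundedness hypothesis on $\delta x_0$ or $\delta u$ --- so the a priori bound $\|\delta x\|_\infty\le M_0(C)$ you invoke is simply false (take $\delta x_0$ arbitrarily large), and even granting it the $\|\delta u\|_2^2$ contribution survives. Your subdivision of $[0,T]$ addresses only the mixed term $|\delta v|(|\delta x|+|\delta u|)$ and cannot repair this.

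The paper avoids the problem by never expanding $f_i$ at all. Writing $\dot\xi=\sum_i v_i f_i(x,u)-\sum_i\vh_i f_i(\xh,\uh)-\dot F_v\,\delta y-\sum_i\delta v_i f_i(\xh,\uh)$ and using the exact cancellation of the $\delta v$-terms, this equals $\sum_i v_i\,[f_i(x,u)-f_i(\xh,\uh)]-\dot F_v\,\delta y$, and only the global Lipschitz continuity of $f_i$ is needed: $|\dot\xi|\le L\big(|\xi|+\|F_v\|_\infty|\delta y|+|\delta u|\big)(1+|v|)+\|\dot F_v\|_\infty|\delta y|$, which is \emph{linear} in $(\xi,\delta y,\delta u)$. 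The factor $(1+|v|)$ multiplying $|\xi|$ --- your $K_2$ obstacle --- is then absorbed by applying Lemma \ref{GronLem} with the integrable coefficient $c_t=L(1+|v_t|)$, whose $L^1$-norm is at most $L(T+\sqrt{T}C)$; the resulting exponential is controlled by $C$ alone and no partition of $[0,T]$ is needed. If you replace your Taylor expansion by this Lipschitz estimate and use the $L^1$-coefficient version of Gronwall, your argument closes and coincides with the paper's proof.
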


\begin{proof}
In order to simplify the notation we omit the dependence on $t.$
Consider $w=(x,u,v)$ solution of \eqref{stateeq} with
$\|v\|_2\leq C.$ Let $\delta w:=w-\wh,$ $\delta y_t:=\int_0^t \delta v_s {\rm d}s,$ and
$\xi:=\delta x-B\delta y,$ with $y_t:=\int_0^t v_s\mr{d}s.$ Note
that
\be \label{xidot}
\begin{split}
\dot\xi=
& \sum_{i=0}^m \left[v_if_i(x,u)-\vh_if_i(\xh,\uh)\right]-\dot B\delta y- \sum_{i=1}^m \delta
v_i\, f_i(\xh,\uh)\\
=&\sum_{i=0}^m v_i[f_i(x,u)-f_i(\xh,\uh)]-\dot{B}\delta y,\end{split}
\ee
where $v_0\equiv 1.$
In view of the Lipschitz-continuity of $f_i,$
\benl
|f_i(x,u)-f_i(\xh,\uh)|\leq L (|\delta x|+|\delta u|) \leq
L(|\xi|+\|B\|_{\infty}|\delta y|+|\delta u|),
\eenl
for some $L\gr 0.$
Thus, from \eqref{xidot} it follows
\benl
|\dot\xi|
\leq L(|\xi|+\|B\|_{\infty}|\delta
y|+|\delta u|)(1+|v|)+\|\dot B\|_{\infty}|\delta y|.
\eenl
Applying Gronwall's Lemma \ref{GronLem} one gets
\benl
\| \xi\|_\infty \leq e^{L\|1+|v|\,\|_1}\Big[ |\xi_0| + \left\|L(1+|v|)(\|F_v\|_\infty |\delta y| + |\delta u|)+\|\dot{F}_v\|_\infty |\delta y|\,\right\|_1  \Big].
\eenl
Note that the Cauchy-Schwarz inequality applied to previous estimate yields
\benl
\|\xi\|_{\infty} 
\leq \rho_1\big(|\xi_0|+\|\delta y\|_1+\|\delta u\|_1
+ \|\delta y\|_2\|v\|_2 +  \|\delta
u\|_2\|v\|_2\big),
\eenl
for $\rho_1=\rho_1(L,C,\|F_v\|_{\infty},\|\dot{F}_v\|_{\infty}).$
Since $\|\delta x\|_2\leq
\|\xi\|_2+\|F_v\|_{\infty}\|\delta y\|_2,$ by  previous estimate and Cauchy-Schwarz inequality,  the result follows.
\end{proof}

Finally, the following lemma gives an estimate for the difference between the variation of the state variable and the linearized state.

\begin{lemma}
\label{lemmaeta}
Consider $C\gr 0$ and $w=(x,u,v)\in\W$ a trajectory with $\|w-\wh \|_{\infty}\leq C.$ 
Set $(\delta x,\delta u,\delta v):= w-\wh$ and let $\xb$ be the linearization of $\xh$ associated with $(\delta x,\delta u,\delta v).$
Define
\benl
\vartheta:=\delta x-\xb.
\eenl
Then, $\vartheta$ is solution of the differential equation
\be
\label{doteta}
\begin{split}
\dot\vartheta
&= 
\sum_{i=0}^m \vh_i D_xf_{i} (\xh,\uh)\vartheta 
+ 
\sum_{i=1}^m \delta v_i Df_{i} (\xh,\uh)(\delta x,\ub) + \zeta,\\
\vartheta_0 &= 0,
\end{split}
\ee
where the remainder $\zeta$ is given by
\be 
\label{zeta}
\zeta:= \sum_{i=0}^m v_i \Big[\half D^2f_i(\xh,\uh)(\delta x,\ub)^2+ L|(\delta x,\ub)|^3\Big],
\ee
and $L$ is a Lipschitz constant for $D^2f_i,$ uniformly for $i=0,\dots,m.$ Furthermore, $\zeta$ satisfies the estimates
\be
\label{estzeta}
\|\zeta\|_{\infty} \mi \rho_1C,\quad \|\zeta\|_2 \mi \rho_1C\sqrt\gamma,
\ee
where $\rho_1= \rho_1(C,\|D^2 f\|_{\infty},L,\|v\|_\infty+1)$ and $\gamma:=\gamma(\delta x_0,\delta u,\delta v).$

If in addition, $C\rightarrow 0,$
the following estimates for $\vartheta$ hold
\be
\label{esteta}
\|\vartheta\|_{\infty} = o(\sqrt\gamma),
\quad 
\|\dot\vartheta\|_2 = o(\sqrt\gamma).
\ee
\end{lemma}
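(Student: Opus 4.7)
The plan is to derive the ODE \eqref{doteta} by a direct Taylor expansion, then bound the remainder $\zeta$ by elementary estimates, and finally extract the estimates on $\vartheta$ through Gronwall's lemma. Throughout I identify $\bar u = \delta u$ and $\bar v = \delta v$ in the linearized equation, as prescribed in the statement.

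\medskip
\noindent\textbf{Step 1: Derivation of \eqref{doteta}--\eqref{zeta}.}
Applying Assumption \ref{regular} (Lipschitz $D^2 f_i$) I expand, for each $i=0,\dots,m$ and a.e.\ $t$,
\benl
f_i(x_t,u_t) = f_i(\xh_t,\uh_t) + Df_i(\xh_t,\uh_t)(\delta x_t,\delta u_t) + \tfrac12 D^2 f_i(\xh_t,\uh_t)(\delta x_t,\delta u_t)^2 + r_{i,t},
\eenl
with $|r_{i,t}| \le L |(\delta x_t,\delta u_t)|^3$. Substituting into $\dot{\delta x} = \sum_{i=0}^m v_i f_i(x,u) - \sum_{i=0}^m \vh_i f_i(\xh,\uh)$, splitting $v_i=\vh_i+\delta v_i$ (with $\delta v_0\equiv 0$), and subtracting
\benl
\dot{\bar x} = \sum_{i=0}^m \vh_i \bigl[f_{i,x}\bar x + f_{i,u}\,\delta u\bigr] + \sum_{i=1}^m \delta v_i\, f_i(\xh,\uh),
\eenl
all zeroth-order terms in $(\delta x,\delta u)$ cancel and the first-order terms collapse to $\sum_{i=0}^m \vh_i D_x f_i\,\vartheta$ plus $\sum_{i=1}^m \delta v_i\, Df_i(\xh,\uh)(\delta x,\delta u)$. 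The remaining second-order and cubic terms all come multiplied by $v_i$, which gives exactly \eqref{zeta}. The initial condition $\vartheta_0 = 0$ is immediate since $\bar x_0 = \delta x_0$.

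\medskip
\noindent\textbf{Step 2: Estimates \eqref{estzeta} on $\zeta$.}
Since $\|\delta x\|_\infty + \|\delta u\|_\infty \le 2C$, the pointwise bound
$
|\tfrac12 D^2f_i(\delta x,\delta u)^2 + L|(\delta x,\delta u)|^3| \le (\tfrac12\|D^2 f\|_\infty + 2LC)\,|(\delta x,\delta u)|^2
$
together with $|v_i| \le \|v\|_\infty$ gives $\|\zeta\|_\infty \le \rho_1 C$ after absorbing $|(\delta x,\delta u)|^2 \le 2C|(\delta x,\delta u)|\le 4C^2$ into a constant depending on $C$. For the $L^2$ estimate, the same pointwise bound yields $|\zeta|^2 \le \rho_1^2 C^2 |(\delta x,\delta u)|^2$, so
\benl
\|\zeta\|_2^2 \le \rho_1^2 C^2 (\|\delta x\|_2^2 + \|\delta u\|_2^2) \le \rho_1^2 C^2 \gamma,
\eenl
where I have applied Lemma \ref{lemmadeltax} (which gives $\|\delta x\|_2^2 \le \rho\,\gamma$) and the trivial bound $\|\delta u\|_2^2 \le \gamma$ coming from the definition of $\gamma$.

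\medskip
\noindent\textbf{Step 3: Gronwall estimate on $\vartheta$ as $C\to 0$.}
Rewrite \eqref{doteta} as $\dot\vartheta = A_t\,\vartheta + h_t$ with $A_t := \sum_{i=0}^m \vh_{i,t} f_{i,x}(\xh_t,\uh_t) \in L^\infty$ and $h_t := \sum_{i=1}^m \delta v_{i,t}\, Df_i(\xh_t,\uh_t)(\delta x_t,\delta u_t) + \zeta_t$. Gronwall's Lemma \ref{GronLem} gives $\|\vartheta\|_\infty \le e^{\|A\|_1}\|h\|_1$. The control-driven part of $h$ is estimated by Cauchy--Schwarz:
\benl
\int_0^T |\delta v_{i,t}\,Df_i(\delta x_t,\delta u_t)|\dtt \le \|\delta v\|_\infty\, \|Df\|_\infty\, \sqrt{T}\,(\|\delta x\|_2 + \|\delta u\|_2) \le \rho_2\, C\,\sqrt\gamma,
\eenl
using $\|\delta v\|_\infty \le C$ and Lemma \ref{lemmadeltax}. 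Combined with $\|\zeta\|_1 \le \sqrt T\|\zeta\|_2 \le \sqrt T\,\rho_1 C\sqrt\gamma$, this yields $\|h\|_1 \le \rho_3 C\sqrt\gamma$, hence $\|\vartheta\|_\infty \le \rho_4 C \sqrt\gamma = o(\sqrt\gamma)$ as $C\to 0$. For $\|\dot\vartheta\|_2$, I use $\|\dot\vartheta\|_2 \le \|A\|_\infty \|\vartheta\|_2 + \|h\|_2$, where $\|\vartheta\|_2\le \sqrt T\|\vartheta\|_\infty = o(\sqrt\gamma)$, and $\|h\|_2 \le \|\zeta\|_2 + \|\delta v\|_\infty\|Df\|_\infty(\|\delta x\|_2+\|\delta u\|_2) = O(C\sqrt\gamma) = o(\sqrt\gamma)$.

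\medskip
\noindent\textbf{Main obstacle.} The only delicate point is obtaining the \emph{little-o} (rather than merely \emph{big-O}) in \eqref{esteta}: this requires exploiting the factor $\|\delta v\|_\infty \le C \to 0$ in the cross term $\delta v_i\, Df_i(\delta x,\delta u)$ and in $\zeta$, which is precisely what lets the Cauchy--Schwarz estimates above produce the extra vanishing factor $C$ multiplying $\sqrt\gamma$. One must be careful not to attempt to bound $\|\delta v\|_2$ by $\sqrt\gamma$ directly (it is not); instead one uses $\|\delta v\|_\infty$ against $\|(\delta x,\delta u)\|_2$, which is controlled by $\sqrt\gamma$.
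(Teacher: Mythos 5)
Your proof is correct and follows essentially the same route as the paper's: a second order Taylor expansion of the $f_i$ to derive the ODE for $\vartheta$, the pointwise bound on $\zeta$ combined with Lemma \ref{lemmadeltax} for \eqref{estzeta}, and Gronwall plus Cauchy--Schwarz for \eqref{esteta}. The only (immaterial) difference is that you estimate the cross term via $\|\delta v\|_\infty\sqrt{T}\,\|(\delta x,\delta u)\|_2$ where the paper uses $\|\delta v\|_2\,\|(\delta x,\delta u)\|_2$; both yield the factor $C$ that produces the little-$o$, and your closing remark correctly identifies why $\|\delta v\|_2$ must not be bounded by $\sqrt\gamma$.
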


\begin{proof}
We shall note first that
\be
\label{dotdeltax1}
\dot{\delta x} =
\sum_{i=0}^m v_i \big[f_i(x,u)-f_i(\xh,\uh) \big] + \sum_{i=1}^m \delta v_{i}\,f_i(\xh,\uh).
\ee
Consider the following second order Taylor expansions for $f_i,$
\be
\label{Taylorfi}
f_i(x,u)= f_i(\xh,\uh) + D f_i(\xh,\uh)(\delta x,\delta u) + \half D^2f_i(\xh,\uh)(\delta x,\delta u)^2 + L|(\delta x,\delta u)|^3.
\ee
Combining \eqref{dotdeltax1} and \eqref{Taylorfi} yields
\be
\label{dotdeltax}
\dot{\delta x} =
\sum_{i=0}^m v_i D f_i(\xh,\uh)(\delta x,\delta u)+\sum_{i=1}^m \delta v_{i}f_i(\xh,\uh)+\zeta,
\ee
with the remainder being given by \eqref{zeta}.
The linearized equation \eqref{lineareq} together with \eqref{dotdeltax} lead to \eqref{doteta}.
In view of \eqref{zeta} and Lemma \ref{lemmadeltax}, it can be seen that the estimates in \eqref{estzeta} hold.

On the other hand, by applying Gronwall's Lemma \ref{GronLem} to \eqref{doteta}, and using Cauchy-Schwarz inequality afterwards lead to
\benl
\|\vartheta\|_{\infty} 
\leq 
\rho_3\left\|\sum_{i=1}^m \delta v_i D f_{i} (\xh,\uh) (\delta x,\delta u) + \zeta\right\|_1 
\leq 
\rho_4 \Big[ \|\delta v\|_2(\|\delta x\|_2 + \|\delta u\|_2) + \|\zeta\|_2 \Big],
\eenl
for some positive $\rho_3,\rho_4$ depending on $\|\vh\|_{\infty}$ and $\|Df\|_{\infty}.$ 
Finally, using the estimate in Lemma \ref{lemmadeltax} and \eqref{estzeta} just obtained, the inequalities in \eqref{esteta} follow.
\end{proof}

In view of Lemmas \ref{expansionlagrangian}, \ref{lemmaxbar}, \ref{lemmadeltax} and \ref{lemmaeta} we can justify 
the following technical result that is an essential point in the proof of the sufficient condition of Theorem \ref{SC}.

\begin{lemma}
\label{lemmasc2}
Let $w\in\W$ be a trajectory.
Set $(\delta x,\delta u,\delta v):= w-\wh,$ $\gamma:=\gamma(\delta x_0,\delta u,\delta v)$  and $\xb$ its corresponding linearized state, i.e. the solution of \eqref{lineareq}-\eqref{lineareq0} associated with $(\delta x_0,\delta u,\delta v).$ Assume that    $\|w-\wh\|_{\infty} \rightarrow 0.$
Then  
\be
\label{taylor0}
\L[\lambda](w) = \L[\lambda](\wh) 
+ \Omega[\lambda](\xb,\delta u,\delta v)+o(\gamma).
\ee
\end{lemma}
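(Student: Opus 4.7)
The plan is to start from the Lagrangian expansion in Lemma \ref{expansionlagrangian},
\[
\L[\lambda](w) = \L[\lambda](\wh) + \Omega[\lambda](\delta x,\delta u,\delta v) + \omega[\lambda](\delta x,\delta u,\delta v) + \R(\delta x,\delta u,\delta v),
\]
and to show three things as $\|w-\wh\|_\infty\to 0$: (a) the cubic term $\omega[\lambda]$ is $o(\gamma)$; (b) the remainder $\R$ is $o(\gamma)$; (c) replacing $\delta x$ by the linearized state $\xb$ inside the quadratic form $\Omega[\lambda]$ introduces only an $o(\gamma)$ error. Summing these three facts gives \eqref{taylor0}.

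For (a), the three integrand types in $\omega$ all have the shape $(\text{bounded tensor})\cdot\delta v\cdot(\delta x,\delta u)^2$. Since the hypothesis $\|w-\wh\|_\infty\to 0$ in particular yields $\|\delta v\|_\infty\to 0$, I bound each integral by $C\,\|\delta v\|_\infty\,\bigl(\|\delta x\|_2^2+\|\delta u\|_2^2\bigr)$; Lemma \ref{lemmadeltax} gives $\|\delta x\|_2^2\le\rho\,\gamma$ and by definition $\|\delta u\|_2^2\le\gamma$, so $\omega=o(\gamma)$. For (b), the first piece of $\R$ is dominated by $\|w-\wh\|_\infty\cdot |(\delta x_0,\delta x_T)|^2$, which is $o(\gamma)$ by Lemma \ref{lemmadeltax}; for the second piece, using that $v$ is essentially bounded locally around $\vh$ and $\|(\delta x,\delta u)\|_\infty\to 0$, Lemma \ref{lemmadeltax} again provides $\|(\delta x,\delta u)\|_2^2\le\rho\,\gamma$, so this piece is also $o(\gamma)$.

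For (c), I write $\delta x=\xb+\vartheta$ with $\vartheta$ from Lemma \ref{lemmaeta}. Because $\Omega[\lambda]$ is a continuous quadratic form in which the $x$-argument appears only through the endpoint quadratic $\tfrac12 D^2\ell(\xb_0,\xb_T)^2$ and through the integrand terms $\tfrac12 \xb^\top H_{xx}\xb$, $\ub^\top H_{ux}\xb$ and $\vb^\top H_{vx}\xb$, the difference $\Omega[\lambda](\delta x,\delta u,\delta v)-\Omega[\lambda](\xb,\delta u,\delta v)$ is the sum of cross terms linear in $\vartheta$ plus a purely quadratic term in $\vartheta$. Using $\vartheta_0=0$ together with the Lemma \ref{lemmaeta} estimate $\|\vartheta\|_\infty=o(\sqrt\gamma)$, and using Lemma \ref{lemmaxbar} to bound $|\xb_T|^2$, $\|\xb\|_2^2$ by $O(\gamma)$, each cross term is controlled by Cauchy-Schwarz as $O(\sqrt\gamma)\cdot o(\sqrt\gamma)=o(\gamma)$, while the purely quadratic $\vartheta$-piece is $o(\gamma)$ directly. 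Combining (a), (b), (c) yields the claim.

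The main obstacle I anticipate is (c): one must be careful that the substitution $\delta x\mapsto\xb$ is legitimate even though Lemma \ref{lemmaeta} only furnishes $\|\vartheta\|_\infty=o(\sqrt\gamma)$ rather than $o(\gamma)$. The subtlety is that cross terms such as $\int \vb^\top H_{vx}\vartheta\,dt$ mix $\vb$ — which is only $L^2$-small of order $\sqrt\gamma$ — with $\vartheta$, so the gain of a full $o(\gamma)$ really does rely on pairing the $\sqrt\gamma$ smallness of $\vb$ in $L^2$ with the $o(\sqrt\gamma)$ smallness of $\vartheta$ in $L^2$ (obtained from its $L^\infty$ bound). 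Once this bookkeeping is made precise, the rest is routine application of the auxiliary lemmas.
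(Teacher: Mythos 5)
Your overall strategy is the same as the paper's (expand $\L$ via Lemma \ref{expansionlagrangian}, kill the cubic and remainder terms, then replace $\delta x$ by $\xb$ inside $\Omega$), and your parts (a) and (b) are fine. But part (c) contains a genuine gap, precisely at the point you flag as the ``main obstacle'': you assert that $\vb=\delta v$ is ``$L^2$-small of order $\sqrt\gamma$,'' and this is false. By definition,
\benl
\gamma(\delta x_0,\delta u,\delta v)=|\delta x_0|^2+|\yb_T|^2+\intT\big(|\delta u_t|^2+|\yb_t|^2\big)\dtt,
\qquad \yb_t=\int_0^t\delta v_s\,{\rm d}s,
\eenl
so $\gamma$ controls only the $L^2$ norm of the \emph{primitive} $\yb$ of $\delta v$ and its endpoint value, not $\|\delta v\|_2$. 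A rapidly oscillating $\delta v$ has $\|\delta v\|_2$ arbitrarily large compared with $\sqrt\gamma$ (take $\delta v_t=\eps\sin(Nt)$ with $N$ large). This lack of coercivity in $\vb$ itself is exactly why the whole second-order theory here passes through Goh's transformation, so the cross term $\intT \delta v\tras H_{vx}\vartheta\,\dtt$ cannot be disposed of by Cauchy--Schwarz as $O(\sqrt\gamma)\cdot o(\sqrt\gamma)$.

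The paper closes this gap by integrating by parts:
\benl
\intT \delta v\tras H_{vx}\vartheta\,\dtt
=[\yb\tras H_{vx}\vartheta]_0^T-\intT \yb\tras\big(\dot H_{vx}\vartheta+H_{vx}\dot\vartheta\big)\dtt ,
\eenl
and then uses $\yb_0=0$, $|\yb_T|\le\sqrt\gamma$, $\|\yb\|_2\le\sqrt\gamma$, together with \emph{both} estimates of Lemma \ref{lemmaeta}, namely $\|\vartheta\|_\infty=o(\sqrt\gamma)$ and $\|\dot\vartheta\|_2=o(\sqrt\gamma)$; the second estimate, which your argument never invokes, is included in that lemma precisely for this step. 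The remaining cross terms (those paired with $\delta u$, with $\delta x+\xb$, and the endpoint term, which involves only $\vartheta_T$ since $\vartheta_0=0$) are handled exactly as you describe, so once you replace your treatment of the $\delta v$ term by the integration by parts above, the proof coincides with the paper's.
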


\begin{proof}
Omit the dependence on $\lambda$ for the sake of simplicity.
Recall the expansion of the Lagrangian function given in Lemma \ref{expansionlagrangian}.
Note that by Lemma \ref{lemmadeltax}, 
$
\L(w)
=
\L(\wh)+ \Omega(\delta x,\delta u,\delta v)+o(\gamma).
$
Hence, 
\be
\label{taylorlemma}
\L(w)
=
\L(\wh)+ \Omega(\xb,\delta u,\delta v)+\Delta\Omega + o(\gamma),
\ee
with 
$
\Delta\Omega:= \Omega(\delta x,\delta u,\delta v)-\Omega(\xb,\delta u,\delta v).
$
The next step is using Lemmas \ref{lemmaxbar}, \ref{lemmadeltax} and \ref{lemmaeta} to  prove that
\begin{equation}
\label{rest}
\Delta \Omega=o(\gamma).
\end{equation}
Note  that $\Q(a,a)-\Q(b,b)=\Q(a+b,a-b),$ for any bilinear mapping $\Q,$ and any pair $a,b$ of elements in its domain. Set $\vartheta:=\delta x-  \xb$ as it is done in Lemma \ref{lemmaeta}. 
Hence, 
\benl
\begin{split}
\Delta \Omega
= &\,
\half\ell'' ( (\delta x_0+\xb_0,\delta x_T+\xb_T) , (0,\vartheta_T) ) \\
& +\ds\int_0^T [\half(\delta x+\xb)\tras Q\vartheta + \delta u\tras E\vartheta + \delta v\tras C\vartheta   ] \dtt.
\end{split}
\eenl
The estimates in Lemmas \ref{lemmaxbar}, \ref{lemmadeltax} and \ref{lemmaeta} yield
$\Delta \Omega = \intT \delta v\tras C\vartheta  \dtt + o(\gamma).$
Integrating by parts in the latter expression and using \eqref{esteta} leads to
\benl
\intT \delta v\tras C\vartheta  \dtt 
=
[\yb\tras C\vartheta]_0^T - \intT \yb\tras(\dot {C}\vartheta + C \dot\vartheta  )\dtt
= o(\gamma), 
\eenl
and hence the desired result follows.
\end{proof}


\begin{proof}
[of Theorem \ref{SC}]
We shall prove that if \eqref{unifpos} holds for
some $\rho>0,$ then $\wh$ satisfies
$\gamma-$growth in the weak sense.
By the contrary assume that the
$\gamma-$growth condition \eqref{qg} is not satisfied.
Consequently, there exists
a sequence of feasible trajectories $\{w_k\}$ converging to $\wh$ in the weak sense, such that
\begin{equation}
\label{qgrowth}
J(w_k)\leq J(\wh)+o(\gamma_k),
\end{equation}
with 
\benl
(\delta x_k,\ub_k,\vb_k):= w_k-\wh \,\,\, \text{and} \,\,\,
\gamma_k:= \gamma (\delta x_{k,0},\ub_k,\vb_k).
\eenl
Let $(\xib_k,\ub_k,\yb_k)$ be the transformed directions defined by \eqref{Goht}. We divide the remainder of the proof in two steps.
 \begin{itemize}
\item[(I)] First  we prove that the sequence given by 
\be
(\tilde{\xi}_k,\tilde{u}_k,\tilde{y}_k,\tilde{h}_k):= (\xib_k,\ub_k,\yb_k,\hb_k)/{\sqrt{\gamma_k}}
\ee
 contains a subsequence converging to an element $(\tilde\xi,\tilde u,\tilde y,\tilde h)$ of $\P_2$ in the weak topology, i.e. $(\tilde u_k,\tilde y_k)\rightharpoonup (\tilde u,\tilde y)$ in the weak topology of $\U_2\times \V_2$ and $(\tilde{\xi}_k,\tilde{h}_k)\rightarrow (\tilde\xi,\tilde h)$ in the strong sense of $\X_2\times \cR^m.$
\item[(II)] 
Afterwards, employing the latter sequence and its weak limit, we show that \eqref{unifpos} together with \eqref{qgrowth} lead to a contradiction.
\end{itemize}


We shall begin by {Part (I).} For this we take an arbitrary Lagrange multiplier $\lambda$ in $\Lambda_L^{\#}.$
By multiplying the inequality \eqref{qgrowth} by $\alpha_0,$ and adding the nonpositive term
\be
\sum_{i=1}^{d_{\varphi}}\alpha_i\varphi_i(x_{k,0},x_{k,T})+\sum_{j=1}^{d_{\eta}}\beta_j\eta_j(x_{k,0},x_{k,T}),
\ee
to its left-hand side, the inequality follows
\begin{equation}
\label{quadlag}
\L[\lambda](w_k)\leq\L[\lambda](\wh)+o(\gamma_k).
\end{equation}
Let us now recall the expansion \eqref{taylor0} given in Lemma \ref{lemmasc2}. Note that the elements of the sequence $(\tilde\xi_{k,0},\tilde u_k,\tilde y_k,\tilde h_k)$ have unit $\cR^n\times \U_2\times \V_2\times \cR^m-$norm. The Banach-Alaoglu Theorem (see e.g. \cite[Theorem III.15]{Bre83})  implies that, extracting if
necessary a subsequence, there exists
$(\tilde\xi_0,\tilde u,\tilde y,\tilde h)\in \cR^n\times \U_2\times \V_2\times \cR^m$ 
such that
\begin{equation}
\label{limityk}
\tilde\xi_{k,0}\rightarrow \tilde\xi_0,\quad
\tilde u_k\rightharpoonup \tilde u,\quad
\tilde y_k\rightharpoonup \tilde y,\quad
\tilde h_k\rightarrow \tilde h,
\end{equation}
where the two limits indicated with
$\rightharpoonup$ are taken in the weak topology of
$\U_2$ and $\V_2,$ respectively.
The solution of equation \eqref{xieq} associated with
$(\tilde \xi_0,\tilde u,\tilde y)$ is denoted by $\tilde\xi,$ which is the limit of $\tilde\xi_k$ in $\X_2.$ 
For the aim of proving that $(\tilde\xi,\tilde u,\tilde v,\tilde h)$ belongs to $\P_2,$ we shall check that the initial-final conditions \eqref{tlinearconseq}-\eqref{tlinearconsineq} are verified.
For each index $0\leq i\leq d_{\varphi},$ one has
\be 
\label{phineg'}
D\varphi_i(\xh_0,\xh_T) (\tilde\xi_0,\tilde\xi_T+B_T\tilde h)
=  
\lim_{k\rightarrow \infty} D\varphi_i(\xh_0,\xh_T)\left(\frac{\xb_{k,0},\xb_{k,T}}{\sqrt{\gamma_k}}\right).
\ee
In order to prove that the right hand-side of \eqref{phineg'} is nonpositive, we consider the following first order Taylor expansion of
function $\varphi_i$ around $(\xh_0,\xh_T):$ 
\benl
\varphi_i(x_{k,0},x_{k,T})
= 
\varphi_i(\xh_0,\xh_T) + D \varphi_i(\xh_0,\xh_T) (\delta x_{k,0}, \delta x_{k,T}) 
+o(|(\delta x_{k,0}, \delta x_{k,T})|).
\eenl
Previous equation and Lemmas \ref{lemmaxbar} and \ref{lemmaeta} imply
\benl
\label{taylor_phi}
\varphi_i(x_{k,0},x_{k,T})=\varphi_i(\xh_0,\xh_T)+D\varphi_i(\xh_0,\xh_T)(\xb_{k,0},\xb_{k,T})+o(\sqrt{\gamma_k}).
\eenl
Thus, the following approximation for the right hand-side in \eqref{phineg'} holds,
\be
\label{difphi}
D\varphi_i(\xh_0,\xh_T) \left( \frac{\xb_{k,0},\xb_{k,T}}{\sqrt{\gamma_k}} \right)
=\frac{\varphi_i(x_{k,0},x_{k,T})-\varphi_i(\xh_0,\xh_T)}{\sqrt{\gamma_k}}+o(1).
\ee
Since $w_k$ is a feasible trajectory, it satisfies \eqref{finalineq} and, therefore, equations
\eqref{phineg'} and \eqref{difphi} yield, for $1\leq i\leq d_{\varphi},$
$
D\varphi_i(\xh_0,\xh_T)(\tilde\xi_0,\tilde\xi_T+B_T\tilde h)\leq 0.
$ 
For $i=0$ use inequality \eqref{qgrowth} to get the corresponding inequality.
Analogously, 
\be
\label{eta0}
D\eta_j(\xh_0,\xh_T)(\tilde\xi_0,\tilde\xi_T+B_T\tilde h) = 0,\quad \mr{for}\ j=1,\hdots,d_{\eta}.
\ee
Thus $(\tilde\xi,\tilde u,\tilde y,\tilde h)$ satisfies \eqref{tlinearconseq}-\eqref{tlinearconsineq}, and hence it belongs to $\P_2.$


Let us deal with {Part (II).} 
Note that from \eqref{taylor0} and \eqref{quadlag} we get
 \be
\Omega_{\P_2} \lam (\tilde\xi_k,\tilde u_k,\tilde y_k,\tilde h_k) \leq o(1),
\ee
and thus
\be 
\label{lims}
\liminf_{k\rightarrow \infty}\, \Omega_{\P_2} \lam (\tilde\xi_k,\tilde u_k,\tilde y_k,\tilde h_k) \leq 0.
\ee
Consider the subset of $G({\rm co}\,\Lambda_L^{\#})$ defined by
\be
\Lambda_L^{\#,\rho}:= \{\lambda\in G({\rm co}\,\Lambda_L^{\#}): \Omega_{\P_2}\lam - \rho\gamma_\P\ {\rm is}\ {\rm weakly}\,{\rm l.s.c.}\,{\rm on}\ \H_2\times\cR^m \}.
\ee
By applying Lemma \ref{quadform} to the inequality \eqref{unifpos} one has
\be
\label{maxLamrho}
\max_{\lambda\in \Lambda^{\#,\rho}_L} 
\Omega_{\P_2} \lam (\xib,\ub,\yb,\hb) \geq
\rho\gamma_\P(\xib_0,\ub,\yb,\hb),\quad \text{on}\
\P_2.
\ee
We shall take $\tilde\lambda\in \Lambda^{\#,\rho}_L$ that attains the maximum in \eqref{maxLamrho} for the direction $(\tilde \xi,\tilde u,\tilde y,\tilde h).$ Hence we get
\be
\ba{rl}
\vspace{3pt}0
&\leq 
\Omega_{\P_2}[\tilde\lambda](\tilde \xi,\tilde u,\tilde y,\tilde h) - \rho\gamma_\P(\tilde \xi_0,\tilde u,\tilde y,\tilde h)\\
&\leq
\liminf_{k\rightarrow \infty} \Omega_{\P_2}[\tilde\lambda](\tilde\xi_k,\tilde u_k,\tilde y_k,\tilde h_k) - \rho\gamma_\P(\tilde\xi_{k,0},\tilde u_k,\tilde y_k,\tilde h_k)
\leq
-\rho,
\ea
\ee
since  $\Omega_{\P_2}[\tilde\lambda] - \rho\gamma_\P$ is weakly-l.s.c., $\gamma_\P(\tilde\xi_{k,0},\tilde u_k,\tilde y_k,\tilde h_k)=1$ for every $k$
and inequality \eqref{lims} holds. 
This leads us to a contradiction since $\rho\gr0.$ Therefore, the desired result follows.


(ii) Let us now prove the second statement. Assume that $\wh$ is a weak solution satisfying $\gamma-$growth in the weak sense for some constant $\rho'>0,$ and such that $\alpha_0>0$ for every multiplier $\lambda \in G({\rm co}\,\Lambda^{\#}).$ We consider the modified problem
\be\label{tildeP}\tag{$\tilde P$}
\min \{ J(w)-\rho' \gamma(x_0-\xh_0,w-\wh) :  \text{\eqref{stateeq}-\eqref{finalineq}} \},
\ee
and rewrite it in the Mayer form
\be\label{2tildeP}\tag{$\tilde{\tilde P}$}
\begin{split}
 &J(w)-\rho' (|x_0-\xh_0|^2 + |y_T-\yh_T|^2 +\pi_{1,T}+\pi_{2,T}) \rightarrow \min,\\
 &\text{\eqref{stateeq}-\eqref{finalineq}},\\
& \dot y=v,\\
& \dot\pi_1 = (u-\uh)^2,\\
& \dot \pi_2 = (y-\yh)^2,\\
& y_0=0,\ \pi_{1,0}=0,\ \pi_{2,0}=0.
\end{split}
\ee
We aim to apply the second order necessary condition of Theorem \ref{NCP2} to \eqref{2tildeP} at the point $(w=\wh,y=\yh,\pi_1=0,\pi_2=0).$ Simple computations show that at this solution each critical cone of \eqref{P} is the projection of the corresponding critical cone of \eqref{2tildeP}, and that the same holds for the set of multipliers. Furthermore, the second variation of \eqref{2tildeP} evaluated at a multiplier $\tilde{\tilde \lambda} \in G ({\rm co}\, \tilde{\tilde \Lambda}^{\#})$ is given by
\be
\Omega_{\P_2}\lam (\xib,\ub,\yb,\yb_T) - \alpha_0 \rho'\gamma_\P(\xb_0,\ub,\yb,\yb_T),
\ee
where $\lambda \in G ({\rm co}\, {\Lambda}^{\#})$ is the corresponding multiplier for problem \eqref{P}. 
Hence, the necessary condition in Theorem \ref{NCP2} (see Remark \ref{NCt} below) implies that for every $(\xib,\ub,\vb,\hb)\in \P_2$ there exists $\lambda \in G ({\rm co}\, {\Lambda}^{\#})$ such that
\benl
\Omega_{\P_2}\lam (\xib,\ub,\yb,\yb_T) - \alpha_0 \rho'\gamma_\P(\xb_0,\ub,\yb,\yb_T) \geq 0.
\eenl
Setting $\rho:= \min_{G ({\rm co}\, {\Lambda}^{\#})} \alpha_0 \rho' >0$ yields the desired result.
This completes the proof of the theorem.
\end{proof}

\begin{remark}\label{NCt}
Actually, since the dynamics of \eqref{2tildeP} are not autonomous, we apply an extension of Theorem \ref{NCP2} to time-dependent dynamics. The latter follows  by adding a state variable $\kappa$ with dynamics $\dot\kappa = 1$ and $\kappa_0=0,$ and obtaining an autonomous formulation. 
\end{remark}


\section{Shooting algorithm}\label{SectionShoot}

The purpose of this section is to present an appropriate numerical scheme to solve the problem given by \eqref{cost}-\eqref{finaleq}. 
Note that {no inequality endpoint constraints are considered.} 
More precisely, we investigate the formulation and the convergence of an algorithm that approximates an optimal solution provided an initial estimate.

We shall then assume that $\wh$ is a weak solution for \eqref{cost}-\eqref{finaleq}, and let us consider an hypothesis concerning the endpoint conditions. With this end recall Definition \ref{DefCQ}.
The following holds throughout the rest of the article.

\begin{assumption}
\label{cq}
The endpoint equality constraints are qualified at $\wh$ or, equivalently, the derivative of $\bar\eta$ at $(\xh_0,\uh,\vh)$ is onto.
\end{assumption}

It is a well-known result that in this case $\wh$ is normal and has a unique associated multiplier  (see e.g. Pontryagin et al. \cite{PBGM}).
Therefore, without loss of generality, we can consider $\alpha_0=1.$ The unique multiplier associated with $\wh$ is denoted by $\hat\lambda=(\hat\beta,\ph).$

\subsection{Optimality System} In what follows we use the first order optimality conditions \eqref{stationarity} to provide a set of equations from which we can determine $\wh.$ We obtain an optimality system in the form of a {\it two-point boundary value problem} (TPBVP).

For the sake of simplicity of the presentation, we omit the dependence on $t.$ We assume as well that the expressions hold everywhere on $[0,T],$ at least that it is specified otherwise, and that whenever some argument of $f_i,$ $H,$ $\ell,$ $\L$ or their derivatives is omitted,  they are evaluated at $(\wh,\hat\lambda).$  Recall the Assumptions \ref{regular} and \ref{SmoothControls}, that are supposed to hold true here.

\subsubsection{Totally nonlinear case} We shall recall that for the case where all the control variables appear nonlinearly ($m=0$), the classical technique is using the stationarity equation 
\be 
\label{Hu0}
H_u\lamh (\wh)=0,
\ee
to write $\uh$ as a function of $(\xh,\hat\lambda)$ (this is done in e.g. \cite{Bul71,MauGil75,BonCai06,Tre12}). One is able to do this by assuming, for instance, the \textit{strengthened Legendre-Clebsch condition} 
\be
\label{strongLC}
H_{uu}\lamh (\wh) \succ 0.
\ee
The latter condition comes from strengthening the inequality in the necessary optimality condition mentioned in Remark \ref{remarkLC}, which is verified by $\wh$ in view of Corollary \ref{NCunique}.
In this case, due to the Implicit Function Theorem, we can write $\uh=U\lamh(\xh)$ with $U$ being a smooth function. Hence, replacing the occurrences of $\uh$  by $U\lamh(\xh)$ in the  state and costate equations yields a two-point boundary value problem.

\subsubsection{Totally affine case} On the other hand, when the system is affine in all the control variables ($l=0$), we cannot eliminate the control from the equation $H_v=0$ and, therefore, a different technique is employed (see e.g. \cite{Mau76,Obe79,Pes94,BonCai06,ABM11,Tre12}).  
The idea is to consider an index $1\leq i \leq m,$ and to take ${d^{M_i}H_v}/{dt^{M_i}}$ to be the lowest order derivative of $H_v$ in which $\vh_i$ appears with a coefficient that is not identically zero.
Kelley \cite{Kel64}, Goh \cite{Goh66a,GohThesis}, Kelley et al. \cite{KelKopMoy67} and Robbins \cite{Rob67} proved that $M_i$ is even when the investigated extremal is normal.
This implies that $\dot H_v$ depends only on $\xh$ and $\hat\lambda$ and, consequently, it is differentiable in time. Thus the expression
\be
\label{ddotHv0}
\ddot{H}_v \lamh (\wh)=0
\ee
is well-defined. The control $\vh$ can be retrieved from \eqref{ddotHv0}  provided that, for instance, the \textit{strengthened generalized Legendre-Clebsch condition} 
\be
\label{strongenLC}
-\frac{\partial \ddot H_v}{\partial v}\lamh (\wh) \succ 0
\ee
holds (see Goh \cite{GohThesis,Goh95,Goh08}). 
In this case, we can write $\vh =  V\lamh(\xh)$ with $V$ being differentiable.
By replacing $\vh$ by $V\lamh(\xh)$ in the state-costate equations, we get an optimality system in the form of a boundary value problem.

\subsubsection{Partially affine case} In the problem studied here, where $l\gr 0$ and $m\gr 0,$ we aim to use both equations \eqref{Hu0} and \eqref{ddotHv0} to retrieve the control $(\uh,\vh)$ as a function of  the state $\xh$ and the multiplier $\hat\lambda.$
We describe next a procedure to achieve this elimination
that was proposed in Goh \cite{Goh95,Goh08}. 

We shall start by proving that $H_v$ can be differentiated twice in the time variable, as it was done for the totally affine case.
Observe that \eqref{Hu0} may be used to write $\dot\uh$ as a function of $(\hat\lambda,\wh).$ 
In fact, in view of Corollary \ref{NCunique},
\be
\label{Huv}
H_{uv}\lamh(\wh)=0,
\ee
and hence the coefficient of $\dot\vh$ in $\dot{H}_u$ is zero. Consequently,
\be 
\label{dotHu0}
\dot H_u=\dot H_u\lamh (\xh,\uh,\vh,\dot\uh)=0,
\ee
and, if the strengthened Legendre-Clebsch condition \eqref{strongLC} holds, $\dot{\uh}$ can be eliminated from \eqref{dotHu0} yielding
\be 
\label{dotu}
\dot\uh = \Gamma\lamh(\xh,\uh,\vh).
\ee
Take now an index $i=1,\dots,m$ and observe that
\be\label{dotHv}
0=\dot{H}_{v_i} = \ddt \,\ph \fh_i = \ph\sum_{j=0}^m \vh_j [f_j,f_i]^x (\xh,\uh)+{H}_{v_iu}\dot{\uh}=\hat{p}\, [f_0,f_i]^x(\xh,\uh),
\ee
where Corollary \ref{CoroCBsym} and \eqref{Huv} are used in the last equality. Therefore, 
$
\dot H_v = \dot H_v \lamh (\xh,\uh).
$
We can then differentiate one more time $\dot{H}_v,$ replace the occurrence of $\dot\uh$ by $\Gamma$ in \eqref{dotu} and obtain \eqref{ddotHv0} as it was desired.
See that \eqref{ddotHv0} together with the boundary conditions
\begin{gather}
 \label{HvT}  H_v\lamh (\wh_T)=0,\\
 \label{dotHv0} \dot H_v \lamh (\wh_0) = 0,
\end{gather}
guarantee the second identity in the stationarity condition \eqref{stationarity}.

\noindent \textbf{Notation:} Denote by (OS) the set of equations consisting of \eqref{stateeq}-\eqref{finaleq}, \eqref{nontriv}, \eqref{costateeq}-\eqref{transvcond}, \eqref{Hu0},  \eqref{ddotHv0} and the boundary conditions \eqref{HvT}-\eqref{dotHv0}.

\begin{remark}
 Instead of \eqref{HvT}-\eqref{dotHv0}, we could choose another pair of endpoint conditions among the four possible ones: $H_{v,0}=0,$ $H_{v,T}=0,$ $\dot H_{v,0}=0$ and $\dot H_{v,T}=0,$ always including at least one of order zero. The choice we made will simplify the presentation of the result afterwards.
\end{remark}

Observe now that the derivative with respect to $(u,v)$ of the mapping
$
(w,\lambda) \mapsto
\begin{pmatrix}
 H_u\lam(w)
\\
-\ddot H_v\lam(w)
\end{pmatrix}
$
at $(\wh,\hat\lambda)$ is  given by
\be 
\label{Jac}
\J:=
\begin{pmatrix}
 H_{uu}\lamh(\wh) & H_{uv}\lamh(\wh)
\\
-\ds\frac{\partial \ddot{H}_v}{\partial u} \lamh(\wh) & 
-\ds\frac{\partial \ddot{H}_v}{\partial v}\lamh(\wh)
\end{pmatrix}.
\ee
Since \eqref{Huv} holds along $(\wh,\hat\lambda),$ whenever \eqref{strongLC} and \eqref{strongenLC} are verified, the matrix $\J$ is definite positive and consequently, nonsingular. In this case we may write $\uh=U\lamh (\xh)$ and $\vh=V\lamh (\xh)$ from  \eqref{Hu0} and \eqref{ddotHv0}. Thus (OS) can be regarded as a TPBVP whenever the following hypothesis is verified.

\begin{assumption}
\label{assumptionLC}
The conditions \eqref{strongLC} and \eqref{strongenLC} hold along $\wh.$
\end{assumption}

Summing up we get the following result.
\begin{proposition}[Elimination of the control]
\label{ElimCont}
If $\wh$ is a weak solution for which Assumption \ref{assumptionLC} holds, then one has
\benl
\uh=U\lamh (\xh), \quad \vh=V\lamh (\xh),
\eenl
for smooth functions $U$ and $V.$
\end{proposition}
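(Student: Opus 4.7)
The plan is to invoke the Implicit Function Theorem on the pair of equations $H_u\lamh(\wh)=0$ and $\ddot H_v\lamh(\wh)=0$, viewed as an equation for $(u,v)$ with $x$ as parameter (and $\lambda$ fixed to $\hat\lambda$). The ground has essentially been prepared by the discussion preceding the proposition; the proof amounts to packaging those observations.

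First, I would make precise the sense in which $\ddot H_v$ is a function of $(x,u,v)$. As the paper notes, Corollary \ref{NCunique} gives $H_{uv}\lamh=0$ along $\wh$, so the coefficient of $\dot v$ in $\dot H_u$ vanishes and $\dot H_u\lamh$ depends only on $(x,u,v,\dot u)$. By the strengthened Legendre--Clebsch condition $H_{uu}\lamh\succ 0$, the equation $\dot H_u\lamh=0$ can be solved locally (via a first application of the Implicit Function Theorem) for $\dot u=\Gamma\lamh(x,u,v)$, as in \eqref{dotu}. Next, Corollary \ref{CoroCBsym} together with \eqref{Huv} shows in \eqref{dotHv} that $\dot H_v\lamh$ depends only on $(x,u)$. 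Differentiating once more and substituting $\Gamma$ for $\dot u$ produces a function $\Psi\lamh(x,u,v)$ such that $\ddot H_v\lamh=\Psi\lamh(x,u,v)$ along any trajectory; its regularity is inherited from the data thanks to Assumption \ref{regular} together with the $C^\infty$ assumption on $\uh,\vh$ in Assumption \ref{SmoothControls}.

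Second, define
\[
\Phi\lamh(x,u,v) := \bigl(H_u\lamh(x,u,v),\;-\Psi\lamh(x,u,v)\bigr),
\]
so $\Phi\lamh(\xh_t,\uh_t,\vh_t)=0$ for every $t\in[0,T]$. The partial derivative $D_{(u,v)}\Phi\lamh$ evaluated at the reference trajectory is exactly the matrix $\J$ of \eqref{Jac}. Because $H_{uv}\lamh=0$, the matrix $\J$ is block lower-triangular with diagonal blocks $H_{uu}\lamh$ and $-\partial\ddot H_v/\partial v\,\lamh$, which are positive definite by Assumption \ref{assumptionLC}. Hence $\J$ is nonsingular along the whole trajectory.

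Third, the Implicit Function Theorem yields smooth maps $U\lamh$ and $V\lamh$, defined in a neighborhood of $\xh_t$ for each $t$, such that $(u,v)=(U\lamh(x),V\lamh(x))$ is the unique local solution of $\Phi\lamh(x,u,v)=0$. By uniqueness and continuity these local representations patch together along the compact interval $[0,T]$, giving the claimed identities $\uh=U\lamh(\xh)$, $\vh=V\lamh(\xh)$.

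The only delicate point is the first step, namely the justification that $\ddot H_v$ may be treated as a bona fide function of $(x,u,v)$; this relies on the successive cancellations coming from $H_{uv}\lamh\equiv 0$ and $V\lamh\equiv 0$ (Corollaries \ref{NCunique} and \ref{CoroCBsym}) together with the smoothness of the reference controls. Once that is in place, the remainder of the argument is a standard application of the Implicit Function Theorem using the nondegeneracy of $\J$ furnished by Assumption \ref{assumptionLC}.
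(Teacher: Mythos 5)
Your argument is correct and follows essentially the same route as the paper, which proves this proposition through the discussion preceding it: the cancellations coming from $H_{uv}\lamh\equiv 0$ and Corollary \ref{CoroCBsym} make $\ddot H_v$ a genuine function of $(x,u,v)$ after substituting $\dot u=\Gamma\lamh(x,u,v)$, and the Implicit Function Theorem is then applied to the pair $(H_u,-\ddot H_v)$ whose $(u,v)$-Jacobian is $\J$. Your observation that $\J$ is block lower-triangular with positive definite diagonal blocks is, if anything, a slightly cleaner justification of its nonsingularity than the paper's assertion that $\J$ is positive definite.
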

\begin{remark}
When the linear and nonlinear controls are uncoupled, this elimination of the controls is much simpler. An example is shown in Oberle \cite{Obe90} where a nonlinear control variable can be eliminated by the stationarity of the pre-Hamiltonian, and the remaining problem has two uncoupled controls, one linear and one nonlinear. 
\end{remark}

The rest of this article is very close to what was done in Aronna et al. \cite{ABM11} for the totally affine case. The main difference between the study in \cite{ABM11} and the mixed case treated here lies on the derivation of the system (OS). The proof of the convergence in Section \ref{SectionWP} is an extension of the proof of Theorem 5 in \cite{ABM11}.  The presentation here is then more concise, and the reader is referred to the mentioned article for further details.

\subsection{The algorithm}

The aim of this section is to present a numerical scheme to solve system (OS). In view of Proposition \ref{ElimCont} we can define the following mapping.
\begin{definition}
Let $\shoot:\cR^n\times\cR^{n+d_{\eta},*} =: {\rm D}(\mathcal{S})  \rightarrow \ \cR^{d_{\eta}}\times \cR^{2n+2m,*}$ be the {\em shooting function} given by
\be
\label{shoot1}
\begin{array}{rl}
\begin{pmatrix} 
x_0,p_0,\beta 
\end{pmatrix}
 =:\nu
&\mapsto
\ \shoot (\nu):=
\begin{pmatrix}
\eta(x_0,x_T)\\
p_0 +D_{x_0}\ell\lam(x_0,x_T)\\
p_T -D_{x_T}\ell\lam(x_0,x_T)\\
H_v\lam(w_T)\\
\dot H_v (w_0)
\end{pmatrix},
\end{array}
\ee
where $(x,p)$ is a solution of  \eqref{stateeq},\eqref{costateeq},\eqref{Hu0},\eqref{ddotHv0} 
with initial conditions $x_0$ and $p_0,$ and $\lambda:=(p,\beta),$ and where the occurrences of $u$ and $v$ were replaced by $u=U[\lambda](x)$ and $v=V[\lambda](x).$
\end{definition}

Note that solving (OS) consists of finding $\hat\nu \in {\rm  D}(\mathcal{S})$ such that
\be\label{S=0}
\mathcal{S}(\hat\nu)=0.
\ee
Since the number of equations in \eqref{S=0} is greater than the number of unknowns, the Gauss-Newton method is a suitable approach to solve it. The \textit{shooting algorithm} we propose here consists of solving the equation \eqref{S=0} by the Gauss-Newton method.
This algorithm solves, at each iteration $k$ with corresponding value $\nu^k,$ the linear approximation of
\be
\label{chap2minnorm}
\min_{\Delta\in {\rm  D}(\mathcal{S})} \left|\mathcal{S}(\nu^k)+
\mathcal{S}'(\nu^k)\Delta\right|^2,
\ee
obtaining a solution $\Delta^k.$ Afterwards, it updates
$
\nu^{k+1}\leftarrow \nu^k+\Delta^k.
$
Note that in order to solve the linear approximation of problem \eqref{chap2minnorm} we look for $\Delta^k$ in the kernel of the derivative of the objective function, i.e. $\Delta^k$ satisfying
\benl
\label{chap2eqS}
\mathcal{S}'(\nu^k)\tras \mathcal{S}'(\nu^k)\Delta^k + \mathcal{S}'(\nu^k)\tras \mathcal{S}(\nu^k)=0.
\eenl
Hence, to calculate $\Delta^k$ from previous equation, the matrix $\mathcal{S}'(\nu^k)\tras \mathcal{S}'(\nu^k)$ must be nonsingular.
Thus, the Gauss-Newton method is applicable provided that
$\mathcal{S}'(\hat\nu)$ is one-to-one, with $\hat\nu:=(\xh_0,\ph_0,\hat\beta).$ Furthermore, since the right hand-side of system \eqref{S=0} is zero, it converges locally quadratically if the function $\shoot$ has Lipschitz continuous derivative.
The latter holds true here given the regularity hypotheses on the data functions (in Assumption \ref{regular}).
This convergence result is stated in the proposition below. See e.g. Fletcher \cite{Fle80} for a proof.
\begin{proposition}
\label{Conv}
  If $\mathcal{S}'(\hat\nu)$ is one-to-one then the shooting algorithm is locally quadratically convergent.
\end{proposition}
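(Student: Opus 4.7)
The plan is to invoke the classical convergence theory for the Gauss-Newton method in the zero-residual regime, since $\mathcal{S}(\hat\nu)=0$ holds by construction of (OS). Two ingredients are needed: that $\mathcal{S}$ is of class $C^{1,1}$ around $\hat\nu$, and that the Gauss-Newton linear system is uniformly well-posed in a neighborhood of $\hat\nu$.

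First I would verify the regularity of $\mathcal{S}$. By Proposition \ref{ElimCont}, the feedback laws $U\lam$ and $V\lam$ are obtained by applying the Implicit Function Theorem to the system \eqref{Hu0}, \eqref{ddotHv0}, whose Jacobian $\J$ in \eqref{Jac} is invertible under Assumption \ref{assumptionLC}. Since the data functions have Lipschitz-continuous second derivatives (Assumption \ref{regular}), $U$ and $V$ inherit a Lipschitz derivative in $(x,\lambda)$. Standard ODE dependence results then imply that the map $\nu\mapsto (x,p)$ obtained by integrating the reduced Hamiltonian system with initial data $\nu$ is $C^{1,1}$ on a neighborhood of $\hat\nu$. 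Composing with the $C^{1,1}$ endpoint expressions that make up \eqref{shoot1} yields $\mathcal{S}\in C^{1,1}$ locally.

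Next, injectivity of $\mathcal{S}'(\hat\nu)$ means that the symmetric matrix $M(\hat\nu):=\mathcal{S}'(\hat\nu)^\top\mathcal{S}'(\hat\nu)$ is positive definite. By continuity of $\mathcal{S}'$, there is a neighborhood $V$ of $\hat\nu$ on which $M(\nu)$ remains uniformly positive definite, so the Gauss-Newton step
\[
\Delta^k = -M(\nu^k)^{-1}\mathcal{S}'(\nu^k)^\top \mathcal{S}(\nu^k)
\]
is well defined and uniformly bounded in operator norm for $\nu^k\in V$. Using $\mathcal{S}(\hat\nu)=0$ together with Lipschitz continuity of $\mathcal{S}'$ gives the Taylor bound
\[
|\mathcal{S}(\nu^k)-\mathcal{S}'(\hat\nu)(\nu^k-\hat\nu)| \leq \tfrac{L}{2}\,|\nu^k-\hat\nu|^2,
\]
and also $\|\mathcal{S}'(\nu^k)-\mathcal{S}'(\hat\nu)\|\leq L|\nu^k-\hat\nu|$. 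Substituting these into the expression for $\nu^{k+1}-\hat\nu=\nu^k-\hat\nu+\Delta^k$ and using that $M(\hat\nu)^{-1}\mathcal{S}'(\hat\nu)^\top\mathcal{S}'(\hat\nu)$ is the identity on the (image of the) domain, one obtains $|\nu^{k+1}-\hat\nu|\leq C|\nu^k-\hat\nu|^2$ for some $C>0$ and all $\nu^k$ sufficiently close to $\hat\nu$. This is the quadratic convergence claimed.

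The main obstacle is the first step: justifying $C^{1,1}$ regularity of $\mathcal{S}$, since it requires propagating the Lipschitz second-order regularity of the data through (i) the implicit elimination of $(u,v)$, which depends on the nonsingularity of $\J$, and (ii) the flow of the reduced two-point boundary value problem. Once this is in place, the remainder is a routine application of the Gauss-Newton convergence theorem, as in Fletcher \cite{Fle80}.
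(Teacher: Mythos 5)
Your proposal is correct and follows essentially the same route as the paper: the paper simply observes that the zero-residual property $\mathcal{S}(\hat\nu)=0$, the Lipschitz continuity of $\mathcal{S}'$ (from Assumption \ref{regular}), and the injectivity of $\mathcal{S}'(\hat\nu)$ are exactly the hypotheses of the classical Gauss--Newton convergence theorem, and cites Fletcher \cite{Fle80} for it. What you have written is a correct unpacking of that cited theorem (uniform invertibility of $\mathcal{S}'(\nu)^\top\mathcal{S}'(\nu)$ near $\hat\nu$ plus the second-order Taylor estimate), together with a reasonable sketch of why $\mathcal{S}\in C^{1,1}$ via the Implicit Function Theorem applied to \eqref{Hu0} and \eqref{ddotHv0} and smooth dependence of the reduced state--costate flow on initial data.
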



\section{Convergence of the shooting  algorithm: application of the second order sufficient condition}\label{SectionWP}{}
The main result of this last part of the article is the theorem below that gives a condition guaranteeing the quadratic convergence of the shooting method near an optimal local solution.

\begin{theorem}
\label{wp} 
Suppose that $\wh$ verifies Assumption \ref{assumptionLC} and the condition \eqref{unifpos}. Then the shooting algorithm is locally quadratically convergent.
\end{theorem}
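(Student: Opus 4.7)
The plan is to reduce the theorem to Proposition \ref{Conv}: it suffices to prove that the Jacobian $\mathcal{S}'(\hat\nu)$ is one-to-one at $\hat\nu=(\xh_0,\ph_0,\hat\beta)$. Under Assumption \ref{assumptionLC}, the shooting function is well-defined and differentiable, and the convergence then follows automatically from the Lipschitz continuity of $\mathcal{S}'$ guaranteed by Assumption \ref{regular}.

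To show injectivity, I would take an arbitrary element $\Delta=(\xb_0,\pb_0,\bar\beta)$ in the kernel of $\mathcal{S}'(\hat\nu)$ and track what this means in terms of linearized variables. Linearizing the ODE system defining $\mathcal{S}$ around $(\wh,\hat\lambda)$ produces: a linearized state equation for $\xb$, a linearized costate equation for $\pb$, a linearized $H_u=0$ from which (by the strengthened Legendre--Clebsch condition \eqref{strongLC}) one recovers $\bar u$ in feedback form, and a linearized $\ddot H_v=0$ which, combined with the linearized $H_u=0$ and condition \eqref{strongenLC}, allows to express $\bar v$. The condition $\mathcal{S}'(\hat\nu)\Delta=0$ means moreover that the linearized endpoint transversality, the linearized equality constraints $D\eta(\xh_0,\xh_T)(\xb_0,\xb_T)=0$, and the homogeneous endpoint conditions $H_v\lam(\wh_T)_{\rm lin}=0$ and $\dot H_v\lam(\wh_0)_{\rm lin}=0$ all hold.

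The crucial step is to identify this linearized system with the first-order optimality conditions of the linear-quadratic problem
\benl
\min_{(\xib,\bar u,\yb,\hb)\in\P_2}\ \Omega_{\P_2}[\hat\lambda](\xib,\bar u,\yb,\hb),
\eenl
after performing Goh's transformation \eqref{Goht} on $(\xb,\bar u,\bar v)$. Indeed, one checks that the costate equation, the stationarity in $\bar u$ (i.e.\ linearized $H_u=0$) and the stationarity in $\yb$ (coming from $\dot H_v=0$ after differentiation and the identity \eqref{Huv}) are precisely the Pontryagin-type optimality conditions of this LQ problem, while the homogeneous transversality and endpoint conditions encode membership in $\P_2$ together with linearized endpoint constraints. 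Therefore $(\xib,\bar u,\yb,\hb)$ obtained from $\Delta$ is a critical point of $\Omega_{\P_2}[\hat\lambda]$ on $\P_2$ with \emph{all linearized boundary data equal to zero}.

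Since the unique multiplier is $\hat\lambda$, $G(\mr{co}\,\Lambda^\#)=\{\hat\lambda\}$, and the sufficient condition \eqref{unifpos} reads
\benl
\Omega_{\P_2}[\hat\lambda](\xib,\bar u,\yb,\hb)\geq \rho\,\gamma_\P(\xib_0,\bar u,\yb,\hb)\quad\text{on } \P_2.
\eenl
Evaluating this along the critical direction obtained above (which makes $\Omega_{\P_2}[\hat\lambda]$ vanish as a stationary point with zero right-hand side) forces $\gamma_\P(\xib_0,\bar u,\yb,\hb)=0$, hence $\xib_0=0$, $\bar u\equiv 0$, $\yb\equiv 0$ and $\hb=0$. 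Going back through Goh's transformation yields $\xb\equiv 0$, and the linearized costate equation with $\bar u=0$, $\bar v=0$ together with the homogeneous transversality then forces $\pb\equiv 0$ and $\bar\beta=0$. Thus $\Delta=0$ and $\mathcal{S}'(\hat\nu)$ is injective.

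The main obstacle will be the bookkeeping in the middle step: carefully checking that the linearization of the algebraic equations \eqref{Hu0} and \eqref{ddotHv0}, once Goh's transformation is applied to the primal variables and the dual is identified with $\pb$, matches term-by-term the stationarity equations of $\Omega_{\P_2}[\hat\lambda]$ with the matrices $M,E,R$ defined in \eqref{M}--\eqref{Rij}. This is analogous to the computation done in \cite{ABM11} for the totally affine case, and the presence of the nonlinear control only adds the $H_{uu}$-block, which is regular by \eqref{strongLC}; all cross-terms $H_{uv}$ vanish by \eqref{Huv}, so the two blocks decouple cleanly at the linearized level.
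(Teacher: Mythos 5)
Your proposal is correct and follows essentially the same route as the paper: reduce to injectivity of $\mathcal{S}'(\hat\nu)$ via Proposition \ref{Conv}, characterize the kernel by the linearized optimality system, map it through Goh's transformation onto the first-order system of the auxiliary LQ problem with cost $\Omega_{\P_2}[\hat\lambda]$, and invoke \eqref{unifpos} to force the trivial solution. The only cosmetic difference is that you kill the LQ stationary point by the observation that a quadratic form vanishes at critical points of a homogeneous linear system, whereas the paper argues via uniqueness of the (strongly convex) LQ minimizer; both rest on the same positivity condition and the same transformation lemma.
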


The idea is to link the sufficient condition \eqref{unifpos} to the derivative $\mathcal{S}'(\hat\nu).$ Note that \eqref{unifpos} is expressed in the variables after Goh's Transformation, while $\shoot$ is in the original variables. The procedure to achieve Theorem \ref{wp} has three stages that are described in the paragraphs \ref{LinOS}, \ref{SecLQ} and \ref{SecTransf} below. The proof of Theorem \ref{wp} is at the end of the subsection \ref{SecTransf}. 

\begin{remark}
\label{RemUP}
In view of a result in Goh \cite[Section 4.8]{GohThesis} the positive definiteness in \eqref{R22} implies both \eqref{strongLC} and \eqref{strongenLC}. Therefore, in Theorem \ref{wp}, the Assumption \ref{assumptionLC} is guaranteed by the condition \eqref{unifpos}.
Nevertheless, the computations linking \eqref{R22} and Assumption \ref{assumptionLC} are long and not trivial and, therefore, we do not include them in this article.
\end{remark}


\if{
Step 1. Use the linearization of (OS) to obtain a system that models the derivative of $\shoot.$ Call $(LS)$ this linearized system. Step 2. Define an auxiliary linear-quadratic problem (LQ) in the transformed variables, and write (LQS) the first order optimality system of (LQ). Observe that \eqref{unifpos} is a sufficient condition for optimality of (LQ) and thus (LQS) has a unique solution equal to 0. Step 3. Show that each solution of (LS) can be transformed into a solution of (LQS). Conclude that \eqref{unifpos} implies that (LS) has a unique solution equal to 0, which yields the Theorem \ref{wp}.
}\fi


\subsection{Linearization of (OS)}
\label{LinOS}
We write the linearized system associated with (OS), which gives the derivative of $\shoot.$ 
A definition of linearized differential algebraic system can be found in e.g. Kunkel-Mehrmann  \cite{KunMeh06} or Aronna et al. \cite{ABM11}. 
We denote by ${\rm  Lin}\,\F$  the {\it linearization} of function $\F,$ i.e.
\benl
{\rm  Lin}\,\F\mid_{(\zeta^0_t,\alpha^0_t)}(\bar{\zeta}_t,\bar{\alpha}_t):=\F'(\zeta^0_t,\alpha^0_t)(\bar{\zeta}_t,\bar{\alpha}_t),
\eenl
The technical result below will simplify the computation afterwards. Its proof is immediate (or see \cite{KunMeh06}).

\begin{lemma}[Commutation of linearization and differentiation]
\label{lindiff}
Given $\G$ and $\F$ as in the previous definition, it holds:
\be \label{lemmaLineq}
\ddt\,{\rm  Lin}\, \G={\rm  Lin}\,\ddt \G,\quad 
\ddt\,{\rm  Lin}\, \F={\rm  Lin}\,\ddt \F.
\ee 
\end{lemma}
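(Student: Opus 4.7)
The plan is to verify both identities by direct application of the chain rule; since they have the same structure, it suffices to prove, say, $\ddt\,\mathrm{Lin}\,\F = \mathrm{Lin}\,\ddt\,\F$. Following the definition preceding the lemma, $\F$ is a smooth function $\F(\zeta,\alpha)$, evaluated at a nominal trajectory $(\zeta^0(t),\alpha^0(t))$ with perturbation $(\bar\zeta(t),\bar\alpha(t))$, so that
\[
\mathrm{Lin}\,\F\big|_{(\zeta^0_t,\alpha^0_t)}(\bar\zeta_t,\bar\alpha_t) = \F'(\zeta^0_t,\alpha^0_t)(\bar\zeta_t,\bar\alpha_t).
\]
I would differentiate this in $t$ on the one hand, and compute the linearization of the total derivative on the other hand, and then compare.

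For the left-hand side I will apply the product rule to the pairing of the linear operator $\F'(\zeta^0_t,\alpha^0_t)$ against $(\bar\zeta_t,\bar\alpha_t)$, which yields
\[
\ddt\,\mathrm{Lin}\,\F = \F''(\zeta^0_t,\alpha^0_t)\big((\dot\zeta^0_t,\dot\alpha^0_t),(\bar\zeta_t,\bar\alpha_t)\big) + \F'(\zeta^0_t,\alpha^0_t)(\dot{\bar\zeta}_t,\dot{\bar\alpha}_t).
\]
For the right-hand side I will first compute the total time derivative along an arbitrary trajectory, giving $\ddt\,\F = \F'(\zeta,\alpha)(\dot\zeta,\dot\alpha)$, a smooth function of the augmented variable $(\zeta,\alpha,\dot\zeta,\dot\alpha)$. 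Linearizing this around $(\zeta^0_t,\alpha^0_t,\dot\zeta^0_t,\dot\alpha^0_t)$ in the direction $(\bar\zeta_t,\bar\alpha_t,\dot{\bar\zeta}_t,\dot{\bar\alpha}_t)$ and regrouping, I obtain
\[
\mathrm{Lin}\,\ddt\,\F = \F''(\zeta^0_t,\alpha^0_t)\big((\bar\zeta_t,\bar\alpha_t),(\dot\zeta^0_t,\dot\alpha^0_t)\big) + \F'(\zeta^0_t,\alpha^0_t)(\dot{\bar\zeta}_t,\dot{\bar\alpha}_t).
\]

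The two expressions match because the two occurrences of $\F''$ differ only by the order of their bilinear arguments and are therefore equal by the symmetry of the second derivative (Schwarz); this is available thanks to the $C^2$-regularity embedded in Assumption \ref{regular}. The identity for $\G$ follows by the same computation without modification. There is no real obstacle; the only point requiring a modicum of care is the product-rule differentiation of a $t$-dependent linear operator against a $t$-dependent vector. The role of this lemma is purely as a bookkeeping tool, allowing differentiation and linearization to be commuted tacitly when later linearizing the optimality system (OS).
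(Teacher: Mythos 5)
Your proof is correct and takes essentially the same route as the paper's: a direct computation of $\ddt\,{\rm Lin}$ and ${\rm Lin}\,\ddt$ via the product and chain rules, with the two resulting expressions identified through the symmetry of the second derivative. Nothing further is needed.
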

Note that, since $H_v=pB,$ 
\be
\label{LinHv} 
{\rm Lin}\,H_v = \pb F_v + \xb\tras H_{vx}\tras.
\ee
Here, whenever the argument of a function is missing, assume that it is evaluated on $(\wh,\hat\lambda).$ 
The linearization of system (OS) at point $(\xh,\uh,\vh,\hat\lambda)$ consists of the linearized state equation \eqref{lineareq}
with endpoint condition \eqref{linearconseq},
the linearized costate equation
\be
\label{linearcostate}
 -\dot \pb_t 
= 
\pb_tF_{x,t} + \xb_t\tras H_{xx,t} + \ub_t\tras H_{ux,t} + \vb_t\tras H_{vx,t},\quad {\rm  a.e.}\ {\rm  on}\ [0,T],
\ee
with boundary conditions
\begin{align}
\label{condq0} 
  \pb_0 
&= 
-\left[\xb_0\tras D^2_{x_0^2}\ell
+\xb_T\tras D^2_{x_0x_T}\ell 
+\sum_{j=1}^{d_{\eta}}{\bar\beta}_jD_{x_0}\eta_j\right]_{(\xh_0,\xh_T)},\\
\label{condqT}
\pb_T&=\left[\xb_T\tras D^2_{x_T^2}\ell+\xb_0\tras D^2_{x_0x_T}\ell 
+\sum_{j=1}^{d_{\eta}}{\bar\beta}_jD_{x_T}\eta_j\right]_{(\xh_0,\xh_T)},
\end{align}
and the algebraic equations
\begin{eqnarray}
 \label{LinHu0} 
0 &=& {\rm  Lin}\ H_u = \pb F_u + \xb\tras H_{ux}\tras + \ub\tras H_{uu},
\\
 \label{LinddotHv} 
0 &=& {\rm  Lin}\ \ddot H_v
=-\frac{{\rm d}^2}{{\rm d}t^2}(\pb F_v + \xb\tras H_{vx}\tras),\quad {\rm  a.e.}\ {\rm  on}\ [0,T],\\ 
\label{LinHvT}
0 &=& ({\rm  Lin}\ H_{v})_T = \pb_T F_{v,T} + \xb_T\tras H_{vx,T}\tras,\\
\label{LindotHv0}
0 &=& ({\rm  Lin}\ \dot H_v)_0 =-\left.\ddt\right|_{t=0}(\pb F_v + \xb\tras H_{vx}\tras),
\end{eqnarray}
where we used equation \eqref{LinHv} and the commutation property of Lemma \ref{lindiff}.
Observe that \eqref{LinddotHv} -\eqref{LindotHv0} and Lemma \ref{lindiff} yield
\be
\label{LinHv0}
0={\rm  Lin}\ H_v=  \pb F_v + \xb\tras H_{vx}\tras,\quad {\rm a.e.}\ {\rm  on}\ [0,T].
\ee

\noindent\textbf{Notation:} denote by (LS) the set of equations consisting of \eqref{lineareq}, \eqref{linearconseq}, \eqref{linearcostate}-\eqref{LindotHv0}.

\if{
Once the linearized system (LS)  is computed, we can write the derivative of $\shoot$ in a direction  $\bar\nu:=\begin{pmatrix} \xb_0, \pb_0, \bar\beta \end{pmatrix}$ as follows,
\be
\shoot'
(\hat\nu)\bar\nu
=
\begin{pmatrix}
\vspace{2pt} D\eta(\xh_0,\xh_T)(\xb_0,\xb_T)
\\
\vspace{2pt}\pb_0+\left[\xb_0\tras D^2_{x_0^2}\ell
+\xb_T\tras D^2_{x_0x_T}\ell 
+\sum_{j=1}^{d_{\eta}}\bar{\beta}_jD_{x_0}\eta_j\right]_{(\xh_0,\xh_T)}\\
\vspace{2pt}\pb_T-\left[\xb_T\tras D^2_{x_T^2}\ell+\xb_0\tras D^2_{x_0x_T}\ell 
+\sum_{j=1}^{d_{\eta}}\bar{\beta}_jD_{x_T}\eta_j\right]_{(\xh_0,\xh_T)}\\
\vspace{2pt}\pb_TB_T+\xb_T\tras C_T\\
\left.\ddt\right|_{t=0}(\pb B + \xb\tras C\tras)
\end{pmatrix},
\ee
where  $(\xb,\ub,\vb,\pb)$ is the solution of \eqref{lineareq},\eqref{linearcostate},\eqref{LinHu0},\eqref{LinddotHv} associated with the initial condition $(\xb_0,\pb_0)$ and the multiplier $\bar\beta.$
}\fi

\begin{proposition}\label{LSunique}
The differential $\shoot'(\hat\nu)$  is one-to-one if the unique solution of  \eqref{lineareq}, \eqref{linearcostate}, \eqref{LinHu0}, \eqref{LinddotHv} with the initial conditions $(\xb_0,\pb_0)=0$ and with $\bar\beta=0,$ is $(\xb,\ub,\vb,\pb)=0.$ 
\end{proposition}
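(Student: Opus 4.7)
The plan is to unwind the definition of $\shoot$ in \eqref{shoot1} and identify the kernel of $\shoot'(\hat\nu)$ with the solution set of the homogeneous system (LS). First I would note that, under Assumption \ref{assumptionLC}, the matrix $\J$ from \eqref{Jac} is invertible; thus by Proposition \ref{ElimCont} the controls admit smooth closed-form expressions $\uh=U\lamh(\xh)$ and $\vh=V\lamh(\xh)$, whose linearizations give $(\ub,\vb)$ as linear functions of $(\xb,\pb)$ determined algebraically by \eqref{LinHu0} and (after reduction) by \eqref{LinddotHv}. Consequently the forward integration encoded in $\shoot$ is itself differentiable at $\hat\nu$.

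Applying the chain rule to \eqref{shoot1}, I would then verify that the five components of $\shoot'(\hat\nu)\bar\nu$ coincide with the residuals of \eqref{linearconseq}, \eqref{condq0}, \eqref{condqT}, \eqref{LinHvT}, \eqref{LindotHv0}, evaluated on the linearized trajectory $(\xb,\ub,\vb,\pb)$ generated from the shooting parameters $\bar\nu=(\xb_0,\pb_0,\bar\beta)$ through \eqref{lineareq} and \eqref{linearcostate}. Hence $\bar\nu\in\ker\shoot'(\hat\nu)$ if and only if $(\xb,\ub,\vb,\pb)$ together with $\bar\nu$ solves the full system (LS). Since (LS) is linear in all its arguments, this equivalence combined with the hypothesis — that the only trajectory compatible with $(\xb_0,\pb_0)=0$ and $\bar\beta=0$ in the four equations \eqref{lineareq}, \eqref{linearcostate}, \eqref{LinHu0}, \eqref{LinddotHv} is the zero trajectory — immediately forces $\bar\nu=0$ whenever $\shoot'(\hat\nu)\bar\nu=0$, giving the desired injectivity.

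The main technical obstacle I anticipate is the reduction of \eqref{LinddotHv}, a second-order differential relation in $t$, to an algebraic equation for $\vb$ amenable to the chain-rule computation above. To overcome it I would invoke Lemma \ref{lindiff} together with the endpoint conditions \eqref{LinHvT} and \eqref{LindotHv0} to deduce the pointwise identity \eqref{LinHv0} on $[0,T]$, and then exploit the invertibility of the lower-right block $-\partial\ddot H_v/\partial v$ granted by \eqref{strongenLC} to extract $\vb$ explicitly as a linear function of $(\xb,\pb)$. Once this reduction is established, what remains is essentially a bookkeeping verification of the chain-rule expression for $\shoot'(\hat\nu)\bar\nu$.
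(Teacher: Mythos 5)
Your proposal is correct and is essentially the paper's own (largely implicit) argument: the whole content of the proposition is the identification of $\ker\shoot'(\hat\nu)$ with the solution set of the linearized two-point boundary value problem (LS), the five components of $\shoot'(\hat\nu)\bar\nu$ being precisely the residuals of \eqref{linearconseq}, \eqref{condq0}, \eqref{condqT}, \eqref{LinHvT} and \eqref{LindotHv0} along the trajectory generated by $\bar\nu$, and your reduction of \eqref{LinddotHv} to the pointwise identity \eqref{LinHv0} via Lemma \ref{lindiff} and the two endpoint conditions is exactly what the paper does. The only point to tighten is your final sentence: the hypothesis must be understood as `the boundary value problem (LS) admits only the trivial solution' (this is how Proposition \ref{LSunique} is invoked in the proof of Theorem \ref{wp}), not as a uniqueness statement for the initial value problem started from $(\xb_0,\pb_0)=0$ and $\bar\beta=0$ --- the latter is automatic from linearity and the invertibility of $\J$, and by itself would imply nothing about the injectivity of $\shoot'(\hat\nu)$.
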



\subsection{Auxiliary linear-quadratic problem}\label{SecLQ}
Now we introduce the following linear-quadratic control problem in the variables $(\xib,\ub,\yb,\hb).$
Denote by (LQ) the problem given by
\begin{eqnarray}
&&\label{tcost} \Omega_{\P_2}(\xib,\ub,\yb,\hb)\rightarrow \min,\\
&&\label{tlineareq}\text{\eqref{xieq},\eqref{tlinearconseq}},\\
&&\label{heq}\dot{\hb}=0.
\end{eqnarray}
Here $\ub$ and $\yb$ are the control variables, $\xib$ and $\hb$ are the state variables, and $\Omega_{\P_2}$ is the quadratic mapping defined in \eqref{OmegaP2} associated with $\hat\lambda.$

\if{ 
Note that if condition \eqref{unifpos} holds then (LQ) has a unique optimal solution $(\xi,y,h)=0.$
Furthermore,  \eqref{unifpos} yields the strong convexity of the pre-Hamiltonian of (LQ) and hence its unique optimal solution is characterized by its first order optimality system. 
Here we present a one-to-one linear mapping that transforms each solution of (LS)  (introduced in paragraph \ref{SecLS} above) into a solution of this new optimality system. Theorem \ref{wp} will follow.
}\fi

Let $\bar\chi$ and $\bar\chi_h$ be the costate variables corresponding to $\xib$ and $\hb,$ respectively.
Note that the qualification hypothesis in Assumption \ref{cq}
implies that also the endpoint constraints \eqref{tlinearconseq} are also qualified. 
Hence any weak solution $(\xib,\ub,\yb,\hb)$ of (LQ) has a unique associated multiplier $\lambda^{LQ}:=(\bar\chi,\bar\chi_h, \beta^{LQ})$ solution of the system that we describe next (the multiplier associated with the cost function is fixed to 1) .
The pre-Hamiltonian for (LQ) is
\be\label{calH}
\begin{split}
&\mathcal{H}\lamLQ(\xib,\ub,\yb)
:= 
\bar\chi(F_x\xib + F_u\ub + B\yb ) \\
&+
(\half\xib\tras H_{xx} \xib + \ub\tras H_{ux}\xib + \yb\tras M\xib 
+ \half\ub\tras H_{uu} \ub
+ \yb\tras E \ub + \half\yb\tras R\yb ),
\end{split}
\ee
and the endpoint Lagrangian is given by
\be\label{ellLQ}
\ell^{LQ}\lamLQ(\xib_0,\xib_T,\hb_T):=
g(\xib_0,\xib_T,\hb_T)
+\sum_{j=1}^{d_{\eta}}\beta_j^{LQ} D\eta_j(\xib_0,\xib_T+F_{v,T}\hb_T).
\ee
The costate equation for $\bar\chi$ is
\be
\label{chi}
-\dot{\bar\chi}_t  
=
D_{\xib} \mathcal{H}\lamLQ 
= 
\bar\chi F_x + \xib\tras H_{xx} + \ub\tras H_{ux} + \yb^\top M,
\ee
with the boundary conditions
\be 
\label{chi0}
\begin{split}
\bar\chi_0=&
-D_{\xib_0}\ell^{LQ}\lamLQ\\
=&
- \xib_0\tras D_{x_0^2}^2\ell-(\xib_T+F_{v,T}\hb)\tras D_{x_0x_T}^2\ell 
-\sum_{j=1}^{d_{\eta}}\beta_j^{LQ} D_{x_0}\eta_j,
\end{split}
\ee
and
\be
\label{chiT}
\begin{split}
\bar\chi_T&=
D_{\xi_T}\ell^{LQ}\lamLQ\\
&=
 \xib_0\tras D_{x_0x_T}^2\ell
+(\xib_T+F_{v,T}\hb)\tras D^2_{x_T^2}\ell 
+\hb\tras H_{vx,T}
+\sum_{j=1}^{d_{\eta}}\beta_j^{LQ} D_{x_T}\eta_j.
\end{split}
\ee
For the costate variable $\bar\chi_h$ we get the equation and endpoint conditions
\begin{align} 
\dot{\bar\chi}_{h}&=0,\\
\bar\chi_{h,0}&=0,\\
\label{chihT0}\bar\chi_{h,T}
&=D_{\hb}\ell^{LQ}\lamLQ.
\end{align}
Hence, $\bar\chi_h\equiv 0$ and thus \eqref{chihT0} yields
\be\label{chihT}
\begin{split}
0= &\, \xib_0\tras D^2_{x_0x_T}\ell\,F_{v,T} +
(\xib_T+F_{v,T}\hb)\tras D^2_{x_T^2}\ell\, F_{v,T}+\xib_T\tras H_{vx,T}\tras +h\tras S_T  \\
& + \sum_{j=1}^{d_{\eta}}\beta_j^{LQ} D_{x_T}\eta_jF_{v,T}.
\end{split}
\ee
The {stationarity with respect to the control $(\ub,\yb)$} implies
\begin{gather}
\label{calHu}
0=\H_{\ub} = \bar\chi F_u + \xi\tras H_{ux}\tras + \ub\tras H_{uu} + \yb\tras E,
\\
\label{Hy}
0=\H_{\yb}=\bar\chi B+\xib\tras M\tras + \ub\tras E\tras + \yb \tras R.
\end{gather}
\noindent\textbf{Notation:} Denote by (LQS) the set of equations consisting of \eqref{tlineareq}-\eqref{heq},
\eqref{chi}-\eqref{chiT},\eqref{chihT}-\eqref{Hy}.

\bs

Note that  if the uniform positivity \eqref{unifpos} holds, then (LQ) has a unique optimal solution $(\xib,\ub,\yb,\hb)=0.$ Besides, in view of Corollary \ref{CoroSC}, the strengthened Legendre-Clebsch condition holds for (LQ) at $(\xib,\ub,\yb,\hb)=0.$ 
Hence, the unique local optimal solution of (LQ) is characterized by its first order optimality system (LQS). 
This leads to the following result.

\begin{proposition}\label{LQSunique}
If the uniform positivity in \eqref{unifpos} holds, the system (LQS) has a unique solution $(\xib,\ub,\yb,\hb)=0.$
\end{proposition}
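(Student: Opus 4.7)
The plan is to reformulate uniqueness of the solution of (LQS) as uniqueness of the minimizer of the convex quadratic program (LQ), and then to exploit \eqref{unifpos} via an energy-type identity obtained by testing the stationarity equations against the primal variables themselves.

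First I would establish that the unique minimizer of (LQ) is $(\xib,\ub,\yb,\hb)=0$. Since Section \ref{SectionShoot} assumes no endpoint inequality constraints, the feasible set of (LQ), defined by \eqref{xieq}, \eqref{tlinearconseq} and \eqref{heq}, is a linear subspace of $\X_2\times \U_2\times \V_2\times \cR^m$ containing the origin, and the origin is trivially feasible with cost zero. For any nonzero feasible direction $d$, the quadratic form $\Omega_{\P_2}\lamh$ satisfies $\Omega_{\P_2}\lamh(d)=\Omega_{\P_2}\lamh(-d)$; exactly one of $\pm d$ also satisfies the cost-linearization inequality of \eqref{tlinearconsineq} for $i=0$, and thus lies in $\P_2$. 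Applying \eqref{unifpos} to that element yields $\Omega_{\P_2}\lamh(d)\geq \rho\,\gamma_\P(d)>0$, so zero is the unique global minimum of (LQ).

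Second, given any solution $(\xib,\ub,\yb,\hb,\bar\chi,\beta^{LQ})$ of (LQS), I would derive the energy identity
\benl
2\,\Omega_{\P_2}\lamh(\xib,\ub,\yb,\hb)=0
\eenl
by contracting \eqref{chi} with $\xib$, \eqref{calHu} with $\ub$ and \eqref{Hy} with $\yb$, integrating over $[0,T]$, and then integrating by parts to convert $-\int \dot{\bar\chi}\,\xib\,\dtt$ into the boundary terms $\bar\chi_T\xib_T-\bar\chi_0\xib_0$ plus $\int \bar\chi\,\dot{\xib}\,\dtt$, the latter handled via \eqref{xieq}. The boundary terms are then substituted from the transversality conditions \eqref{chi0}--\eqref{chiT}, while the resulting $\beta^{LQ}$-dependence is eliminated using \eqref{chihT} together with the feasibility relation \eqref{tlinearconseq}. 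Collecting coefficients and matching them against the definitions \eqref{M}--\eqref{g} of $M\lamh$, $E\lamh$, $R\lamh$ and $g\lamh$ produces exactly $2\Omega_{\P_2}\lamh$.

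Combining the two steps, $(\xib,\ub,\yb,\hb)$ is feasible for (LQ) with zero cost, hence must coincide with the unique minimizer $0$; equivalently, \eqref{unifpos} gives $\rho\,\gamma_\P(\xib_0,\ub,\yb,\hb)\leq 0$, so $\gamma_\P=0$ and $(\xib_0,\ub,\yb,\hb)=0$, while the dynamics \eqref{xieq} force $\xib\equiv 0$. The main obstacle is the algebraic bookkeeping in the energy identity: one must verify that each quadratic contribution arising from the substitutions matches the corresponding term in \eqref{OmegaP2}, which is purely computational but requires careful tracking of the $M\lamh$, $E\lamh$ and $R\lamh$ cross-terms; apart from that, the argument is the standard self-adjointness calculation for linear-quadratic optimal control.
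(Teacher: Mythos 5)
Your proposal is correct, and its architecture is the same as the paper's: reduce the claim to the statement that solutions of (LQS) are exactly the minimizers of (LQ), and use \eqref{unifpos} (with the evenness of $\Omega_{\P_2}$ and $\gamma_\P$, so that one of $\pm d$ lands in $\P_2$) to conclude that the only minimizer is the origin. The one place where you diverge is the justification that an (LQS) solution is in fact a minimizer. The paper gets this by invoking Corollary \ref{CoroSC}: condition \eqref{unifpos} forces the strengthened Legendre--Clebsch condition \eqref{R22} for (LQ) at the origin, and then the (strong convexity of the) pre-Hamiltonian lets it assert that the unique local solution of (LQ) is characterized by its first-order system. You instead prove directly that any (LQS) solution has zero cost, via the energy identity obtained by contracting \eqref{chi}, \eqref{calHu} and \eqref{Hy} with $\xib$, $\ub$, $\yb$, integrating by parts through \eqref{xieq}, and cancelling the boundary and $\beta^{LQ}$ contributions against $g$ using \eqref{chi0}--\eqref{chiT}, \eqref{chihT} and \eqref{tlinearconseq}; I checked that this bookkeeping does close up and yields $2\Omega_{\P_2}\lamh(\xib,\ub,\yb,\hb)=-2g+2g+2(\Omega_{\P_2}\lamh-g)\cdot 0=0$ as claimed. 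Your route is self-contained and bypasses Corollary \ref{CoroSC} entirely, needing only linearity of the constraints and \eqref{unifpos}; the paper's route is shorter on the page but leans on a sufficiency-of-first-order-conditions principle that it does not spell out. Two cosmetic points: ``exactly one of $\pm d$'' should read ``at least one'' (both qualify when the linearized cost vanishes at $d$), and this does not affect the argument; and since Assumption \ref{cq} is in force, $G(\mr{co}\,\Lambda^{\#})=\{\hat\lambda\}$, so writing $\Omega_{\P_2}\lamh$ in place of the maximum is legitimate.
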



\subsection{The transformation}\label{SecTransf}

Given $(\xb,\ub,\vb,\pb,\bar\beta)\in \W \times W^{1,\infty} \times \cR^{d_{\eta},*},$ define
\be\label{transf}
\yb_t:=\int_0^t \vb_sds,\ \xib:=\xb-F_v\yb,\ \bar\chi:=\pb+\yb\tras H_{vx},\ 
\bar\chi_h:=0,\ \hb:=\yb_T,\  
\beta_j^{LQ}:=\bar\beta_j.
\ee

\begin{lemma}
\label{lemmatransf}
If $\wh$ is a weak solution of \eqref{cost}-\eqref{finaleq}, the one-to-one linear mapping 
$
(\xb,\ub,\vb,\pb,\bar\beta)  \mapsto (\xib,\ub,\yb,\hb,\bar\chi,\bar\chi_h,\beta^{LQ})
$
defined by \eqref{transf} converts each solution of (LS) into a solution of (LQS).
\end{lemma}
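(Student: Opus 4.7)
The plan is to verify each equation of (LQS) in turn, by substituting the definitions in \eqref{transf} and using the equations of (LS). Most verifications are direct manipulations; the hardest is checking the stationarity in $\yb$, which requires recovering a first integral of \eqref{LinddotHv} and matching the coefficients with those of $M,R,E$ given in \eqref{M}-\eqref{R1}.

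First I would dispatch the easy items. The linearized state equation \eqref{xieq} for $\xib$ is the Goh transformation of \eqref{lineareq} and is already proved in Section \ref{GohT}; the equation $\dot{\hb}=0$ is immediate since $\hb=\yb_T$ is a constant; the endpoint relation \eqref{tlinearconseq} follows from \eqref{linearconseq} using $\xb_T=\xib_T+F_{v,T}\hb$ and $\xb_0=\xib_0$; and $\bar\chi_h\equiv 0$ is built into \eqref{transf}. The stationarity in $\ub$, \eqref{calHu}, is obtained by writing $\bar\chi=\pb+\yb\tras H_{vx}$ in the expression $\bar\chi F_u+\xib\tras H_{ux}\tras +\ub\tras H_{uu}+\yb\tras E$ and using $H_{vx}F_u-F_v\tras H_{ux}\tras = -E\tras$ together with \eqref{LinHu0} (recall that $H_{vu}\lamh=0$ by Corollary \ref{NCunique}, so there is no residual term in $\vb$).

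Next I would derive the costate equation \eqref{chi}. Differentiating $\bar\chi=\pb+\yb\tras H_{vx}$ and using \eqref{linearcostate} yields
\[
-\dot{\bar\chi}=\pb F_x+\xb\tras H_{xx}+\ub\tras H_{ux}+\vb\tras H_{vx}-\vb\tras H_{vx}-\yb\tras \dot{H}_{vx}.
\]
The two $\vb\tras H_{vx}$ terms cancel; then I substitute $\xb=\xib+F_v\yb$ and $\pb=\bar\chi-\yb\tras H_{vx}$ in the remaining terms and regroup. The combinations produce precisely $\bar\chi F_x+\xib\tras H_{xx}+\ub\tras H_{ux}+\yb\tras M$, where the $\yb$-coefficient $F_v\tras H_{xx}-\dot H_{vx}-H_{vx}F_x$ is the matrix $M$ defined in \eqref{M}. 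The endpoint conditions \eqref{chi0} and \eqref{chiT} come from \eqref{condq0}-\eqref{condqT}: at $t=0$ one has $\yb_0=0$ so $\bar\chi_0=\pb_0$, and \eqref{condq0} gives the stated expression after writing $\xb_T=\xib_T+F_{v,T}\hb$ and setting $\beta^{LQ}=\bar\beta$; at $t=T$ one uses $\bar\chi_T=\pb_T+\hb\tras H_{vx,T}$ together with \eqref{condqT}, and the extra term $\hb\tras H_{vx,T}$ is exactly the one appearing in \eqref{chiT}.

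The transversality relation \eqref{chihT} is obtained by taking $H_v\lam=0$ at $t=T$ in its linearized form \eqref{LinHvT}, $\pb_T F_{v,T}+\xb_T\tras H_{vx,T}\tras=0$, substituting $\pb_T$ from \eqref{condqT} and $\xb_T=\xib_T+F_{v,T}\hb$, then adding and subtracting the boundary data of $\ell^{LQ}$ so as to express the left-hand side in the form given in \eqref{chihT}. The main obstacle, finally, is the stationarity in $\yb$, equation \eqref{Hy}. From \eqref{LinddotHv}-\eqref{LindotHv0} one has that $\mathrm{Lin}\,\dot H_v$ is constant in time with zero value at $t=0$, so in fact $\mathrm{Lin}\,\dot H_v\equiv 0$; combined with \eqref{LinHv0} this yields two algebraic identities. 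Using $\mathrm{Lin}\,H_v=\pb F_v+\xb\tras H_{vx}\tras$ and substituting the transformation, the first identity reads
\[
\bar\chi F_v+\xib\tras H_{vx}\tras-\yb\tras\big(H_{vx}F_v-F_v\tras H_{vx}\tras\big)=0,
\]
which after symmetrisation reproduces the contribution $\bar\chi B+\xib\tras M\tras+\ub\tras E\tras$ modulo a $\yb\tras R$ term once the derivative in time of $\mathrm{Lin}\,H_v$ is taken and the definition \eqref{R1} of $R$ together with the Goh symmetry $V\equiv 0$ (from Corollary \ref{CoroCBsym}) are invoked. The bookkeeping is lengthy but purely algebraic; at the end one recovers exactly \eqref{Hy}.

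Having verified every equation of (LQS), and since the map in \eqref{transf} is manifestly linear with an explicit inverse ($\vb=\dot\yb$, $\xb=\xib+F_v\yb$, $\pb=\bar\chi-\yb\tras H_{vx}$, $\bar\beta=\beta^{LQ}$), the conclusion of the lemma follows.
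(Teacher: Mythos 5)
Your proposal is correct and follows essentially the same route as the paper's own proof: a part-by-part verification of (LQS) by substituting \eqref{transf} into the equations of (LS), with the costate equation obtained by differentiating $\bar\chi=\pb+\yb\tras H_{vx}$ and recognizing $M$, the relation \eqref{chihT} obtained by combining \eqref{condqT} with \eqref{LinHvT}, and the $\yb$-stationarity obtained by transforming \eqref{LinHv0} (using the symmetry $V\equiv 0$ from Corollary \ref{CoroCBsym}) and then differentiating to recognize $B$, $M$, $E$ and $R$. The only blemish is the identity you quote for the $\ub$-stationarity, which should read $H_{vx}F_u-F_v\tras H_{ux}\tras=-E$ (not $-E\tras$); this is a transpose slip that does not affect the argument.
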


\begin{proof} 
This proof is quite technical since it consists merely of the transformation of  the equations defining (LS).
Let $(\xb,\ub,\vb,\pb,\bar\beta)$ be a solution of (LS), and set
$(\xib,\ub,\yb,\hb,\bar\chi,\beta^{LQ})$  by \eqref{transf}. We want to prove that $(\xib,\ub,\yb,\hb,\bar\chi,\beta^{LQ})$ satisfies \eqref{tlineareq}-\eqref{heq}, \eqref{chi}-\eqref{chiT},\eqref{chihT}-\eqref{Hy}.

\noindent\textit{Part I.} 
Regarding the state equations, observe that \eqref{xieq} follows by differentiating the expression of $\xib$ in \eqref{transf}, 
and \eqref{tlinearconseq} follows from  \eqref{linearconseq}. The equation for $\hb$ in \eqref{heq} is an immediate consequence of its definition.

\noindent\textit{Part II.}
We shall prove that $(\xib,\ub,\yb,\hb,\bar\chi,\beta^{LQ})$ verifies the costate equations given by \eqref{chi}-\eqref{chiT} and \eqref{chihT}.
Differentiate $\bar\chi$ in \eqref{transf}, use equations \eqref{linearcostate} and \eqref{transf}, recall definition of $M$ in \eqref{M} and obtain,
\be
\begin{split}
-\dot{\bar\chi}
=& -\dot \pb-\vb\tras H_{vx}-\yb\tras \dot{H}_{vx}\\ 
=& \pb F_x + \xb\tras H_{xx} +\ub\tras H_{ux} -\yb\tras \dot{H}_{vx}\\
=& \bar\chi F_x+\xib\tras H_{xx} +\ub\tras H_{ux} + \yb\tras (-H_{vx}F_x +F_v\tras H_{xx}-\dot{H}_{vx})\\
=& \bar\chi F_x + \xib\tras H_{xx}+\ub\tras H_{ux}+\yb\tras M.
\end{split}
\ee
Hence \eqref{chi} holds.
Equations \eqref{chi0}-\eqref{chiT} follow from \eqref{condq0}-\eqref{condqT}.
Combine \eqref{condqT} and \eqref{LinHvT} to get
\be
\label{tlinPhiT}
\begin{split}
 0
=&\, \pb_T F_{v,T} + \xb_T\tras H_{vx,T}\tras\\
=& \Big[\pb_T\tras D^2_{x_T^2}\ell +\xb_0\tras D^2_{x_0x_T}\ell  
+\sum_{j=1}^{d_{\eta}}\bar{\beta}_jD_{x_T}\eta_j\Big]F_{v,T}+ \xb_T\tras H_{vx,T}\tras.
\end{split}
\ee
By performing transformation \eqref{transf} in the previous equation and recalling that $S_T=F_{v,T}\tras H_{vz,T}\tras$ (in view of Corollary \ref{CoroCBsym}) one obtains \eqref{chihT}.

\noindent\textit{Part III.} Let us show that the stationarity with respect to $\yb$ in \eqref{calHu} is verified. The transformation in  \eqref{transf} together with  equation \eqref{LinHu0} imply
\be 
\begin{split}
0 = &\,(\bar\chi - \yb\tras H_{vx})F_u + (\xib + F_v\yb)\tras H_{ux}\tras + \ub\tras H_{uu}
\\
=&\,\bar\chi F_u + \xib\tras H_{ux}\tras + \ub\tras H_{uu} + \yb\tras(F_v\tras H_{ux}\tras -H_{vx}F_u).
\end{split}
\ee
Calling back definition of $E$ in \eqref{M}, stationarity condition \eqref{calHu} follows.

\noindent\textit{Part IV.} Finally, we shall prove that \eqref{Hy} holds. Perform the transformation \eqref{transf} in equation \eqref{LinHv0} to obtain
\benl 
0=(\bar\chi - \yb\tras H_{vx})F_v +  (\xib + F_v\yb)\tras H_{vx}\tras = \bar\chi F_v + \xib\tras H_{vx}\tras,
\eenl
since Corollary \ref{CoroCBsym} holds when the multiplier in unique.
Differentiating previous expression we obtain
\be
\label{Hy0}
\begin{split}
0= &-(\bar\chi F_x + \xib\tras H_{xx} + \ub\tras H_{ux} + \yb\tras M)\,F_v  +\bar\chi \dot{F}_v \\
&+ (F_x\xib+F_u\ub+B\yb)\tras H_{vx}\tras +\xib\tras \dot{H}_{vx}\tras.
\end{split}
\ee
Recall the definitions of $B$ in \eqref{B1} and of $E$ in \eqref{M}, of $R$ in \eqref{R1}, and use them in \eqref{Hy0} to get \eqref{Hy}.

Parts I to IV show that $(\xib,\ub,\yb,\hb,\bar\chi,\beta^{LQ})$ is a solution of (LQS), and hence the result follows.
\end{proof}


We shall now go back to the convergence Theorem \ref{wp}.

\begin{proof}{[of Theorem \ref{wp}]}
Let $(\xb,\ub,\vb,\pb,\bar\beta)$ be a solution of (LS), and let 
\\
$(\xib,\ub,\yb,\hb,\bar\chi,\bar\chi_h,\beta^{LQ})$ be defined by the transformation in \eqref{transf}. Hence we know by Lemma \ref{lemmatransf} that $(\xib,\ub,\yb,\hb,\bar\chi,\bar\chi_h,\beta^{LQ})$
is solution of (LQS). 
As it has been already shown in Proposition \ref{LQSunique}, condition \eqref{unifpos} implies that the unique solution of (LQS) is 0.
Hence $(\xib,\ub,\yb,\hb,\bar\chi,\bar\chi_h,\beta^{LQ})=0$ and thus $(\xb,\ub,\vb,\pb,\bar\beta)=0.$ Conclude that the unique solution of (LS) is 0. This yields the injectivity of $\shoot'$ at $\hat\nu,$ and hence the convergence result follows.
\end{proof}

\begin{remark}
[The shooting algorithm for the control constrained case] We claim that the formulation of the shooting algorithm above and the proof of its local convergence can be also done for problems where the controls are subject to bounds of the type
\be
0\leq u_t \leq 1,\quad 0\leq v_t \leq 1,\quad \text{a.e.}\ \text{on}\ [0,1],
\ee
and under some geometric hypotheses on the structure of the optimal control.
This extension should follow the procedure in Section 8 of \cite{ABM11}. 
\end{remark}

\if{
\section{Examples}

Lawden's spiral and example in \cite{DmiShi10}, some example from geometry.
}\fi

\section{Conclusion}

We studied optimal control problems in the Mayer form with systems that are affine in some components of the control variable. 
A set of `no gap' necessary and sufficient second order  optimality conditions is provided. These conditions apply to a weak minimum and do not assume the uniqueness of multipliers.
For qualified solutions, we proposed a shooting algorithm and proved that its local convergence is guaranteed by the sufficient condition above-mentioned.

There are several issues in this direction of investigation that remain open. For instance, one can think of the study of other type of minimum, like Pontryagin or strong. Other possible task is the optimality of bang-singular solutions, that had not yet been deeply looked into but show to be useful in practice. Therefore, the results presented here can be pursued by many interesting extensions.

\if{
\begin{proof}
Let us compute $\ddt\,{\rm  Lin}\, \G:$
\be\label{ddtLin}
\ba{rl}
&\ddt\,\left[{\rm  Lin}\, \G\mid_{(\zeta^0_t,\alpha^0_t)}(\bar{\zeta}_t,\bar{\alpha}_t)\right]\\
&=[\G_{\zeta\zeta}(\zeta^0_t,\alpha^0_t)\dot\zeta^0_t+
\G_{\zeta\alpha}(\zeta^0_t,\alpha^0_t)\dot\alpha^0_t]\bar\zeta_t
+\G_{\zeta}(\zeta^0_t,\alpha^0_t)\dot{\bar\zeta}_t\\
&+[\G_{\alpha\zeta}(\zeta^0_t,\alpha^0_t)\dot\zeta^0_t+
\G_{\alpha\alpha}(\zeta^0_t,\alpha^0_t)\dot\alpha^0_t]\bar\alpha_t
+\G_{\alpha}(\zeta^0_t,\alpha^0_t)\dot{\bar\alpha}_t.
\ea
\ee
For ${\rm  Lin}\,\ddt \G$ we get
\be\label{Linddt}
\ba{rl}
&{\rm  Lin}\mid_{(\zeta^0_t,\alpha^0_t)}\,\ddt \G(\zeta_t,\alpha_t)\\
&={\rm  Lin}\mid_{(\zeta^0_t,\alpha^0_t)}
[\G_{\zeta}(\zeta_t,\alpha_t)\dot\zeta_t+
\G_{\alpha}(\zeta_t,\alpha_t)\dot\alpha_t]\\
&=[\G_{\zeta\zeta}(\zeta^0_t,\alpha^0_t)\bar\zeta_t
+\G_{\zeta\alpha}(\zeta^0_t,\alpha^0_t)\bar\alpha_t]\dot{\zeta}^0_t
+\G_{\zeta}(\zeta^0_t,\alpha^0_t)\dot{\bar\zeta}_t\\
&+[\G_{\alpha\zeta}(\zeta^0_t,\alpha^0_t)\bar\zeta_t
+\G_{\alpha\alpha}(\zeta^0_t,\alpha^0_t)\bar\alpha_t]
\dot\alpha^0_t
+\G_{\alpha}(\zeta^0_t,\alpha^0_t)\dot{\bar\alpha}_t.
\ea
\ee
By \eqref{ddtLin} and \eqref{Linddt} we get \eqref{lemmaLineq}.
\end{proof}
}\fi


\end{document}


\begin{thebibliography}{1}

\bibitem{Ada75}
R.A. Adams,
\newblock {\em Sobolev spaces},
\newblock Academic Press, New York, 1975.

\bibitem{AgrSac}
A.A. Agrachev and Y.L. Sachkov,
\newblock {\em Control theory from the geometric viewpoint}, volume~87 of {\em Encyclopaedia Math. Sci.,} 2004.

\bibitem{AleTikFom79}
V.M. Alekseev, V.M. Tikhomirov, and S.V. Fomin,
\newblock {\em Optimal Control,}
\newblock Nauka, Moscow, 1979.
\newblock [in {R}ussian].

\bibitem{ABDL11}
M.S. Aronna, J.~F. Bonnans, A.~V. Dmitruk, and P.A. Lotito,
\newblock {\em Quadratic order conditions for bang-singular extremals,}
\newblock {Numer. {A}lgebra, {C}ontrol {O}ptim. (AIMS), special issue dedicated to {P}rofessor {H}elmut {M}aurer on the occasion of his 65th.  birthday}, 2(3) (2012), pp. 511--546.

\bibitem{ABL08}
M.S. Aronna, J.F. Bonnans, and P.A. Lotito,
\newblock {\em Continuous time optimal hydrothermal scheduling,}
\newblock in Proceedings of the International Conference on Engineering Optimization, Rio de Janeiro, Brasil, 2008.

\bibitem{ABM11}
M.S. Aronna, J.F. Bonnans, and Martinon P.,
\newblock {\em A shooting algorithm for problems with singular arcs,}
\newblock {Accepted for publication in J. Optim. Theory Appl.}, (2012).
\newblock Published online as INRIA Research Rapport Nr. 7763.

\bibitem{Azi05}
D.M. Azimov,
\newblock {\em Active sections of rocket trajectories. {A} survey of research,}
\newblock {Avtomat. i Telemekh.}, 11 (2005), pp. 14--34.

\bibitem{BelJac}
D.J. Bell and D.H. Jacobson,
\newblock {\em Singular Optimal Control Problems,}
\newblock Academic Press, 1975.

\bibitem{BerZei90}
D.S. Bernstein and V.~Zeidan,
\newblock {\em The singular linear-quadratic regulator problem and the {G}oh-{R}iccati equation,}
\newblock in {Proceedings of the IEEE Conference on Decision and Control}, volume~1, 1990, pp. 334--339.

\bibitem{BLMT09}
F.~Bonnans, J.~Laurent-Varin, P.~Martinon, and E.~Tr{\'e}lat,
\newblock {\em Numerical study of optimal trajectories with singular arcs for an  {A}riane 5 launcher,}
\newblock {J. Guidance Control Dynam.}, 32(1) (2009), pp. 51--55.

\bibitem{BonOC}
J.F. Bonnans,
\newblock {\em Optimisation continue}.
\newblock Dunod, 2006.

\bibitem{BonCai06}
B.~Bonnard and J.-B. Caillau,
\newblock {\em Introduction to nonlinear optimal control,}
\newblock in {Advanced topics in control systems theory}, volume 328 of {Lecture Notes in Control and Inform. Sci.}, pp. 1--60. Springer,
  London, 2006.

\bibitem{BPT02}
H.~J. Bortolossi, M.~V. Pereira, and C.~Tomei,
\newblock {\em Optimal hydrothermal scheduling with variable production coefficient,}
\newblock {Math. Methods Oper. Res.}, 55(1) (2002), pp. 11--36.

\bibitem{Bre83}
H.~Br\'ezis,
\newblock {\em Analyse fonctionnelle,}
\newblock Masson, Paris, 1983.

\bibitem{BryHo}
A.E. Bryson, Jr. and Y.C. Ho,
\newblock {\em Applied optimal control,}
\newblock Hemisphere Publishing Corp. Washington, D. C., 1975.

\bibitem{Bul71}
R.~Bulirsch,
\newblock {\em Die {M}ehrzielmethode zur numerischen {L}{o}sung von nichtlinearen   {R}andwertproblemen und {A}ufgaben der optimalen {S}teuerung,}
\newblock {Report der Carl-Cranz Gesellschaft}, 1971.

\bibitem{ChoAbaPar93}
D.I. Cho, P.L. Abad, and M.~Parlar,
\newblock {\em Optimal production and maintenance decisions when a system experience age-dependent deterioration,}
\newblock {Optimal Control Appl. Methods}, 14(3) (1993), pp. 153--167.

\bibitem{Dmi77}
A.V. Dmitruk,
\newblock {\em Quadratic conditions for a weak minimum for singular regimes in optimal control problems,}
\newblock {Soviet Math. Doklady}, 18(2) (1977).

\bibitem{Dmi84}
A.V. Dmitruk,
\newblock {\em Jacobi-type conditions for the problem of {B}olza with inequalities,}
\newblock {Math. Notes}, 35(6) (1984), pp. 427--435.

\bibitem{Dmi08}
A.V. Dmitruk,
\newblock {\em Jacobi type conditions for singular extremals,}
\newblock {Control Cybernet.}, 37(2) (2008), pp. 285--306.

\bibitem{Dmi11}
A.V. Dmitruk,
\newblock {\em Quadratic order optimality conditions for extremals completely  singular in part of controls,}
\newblock in {Operations Research Proceedings, Selected Papers of the Annual International Conference of the German Operations Research Society}, 2011, pp. 341--346.

\bibitem{DmiShi10}
A.V. Dmitruk and K.K. Shishov,
\newblock {\em Analysis of a quadratic functional with a partly singular {L}egendre   condition,}
\newblock {Moscow Univ. Comput. Math. Cybernet.}, 34(1) (2010), pp. 16--25.

\bibitem{Fle80}
R.~Fletcher,
\newblock {\em Practical methods of optimization. {V}ol. 1}.
\newblock John Wiley \& Sons Ltd., Chichester, 1980.

\bibitem{Goddard}
R.H. Goddard,
\newblock {\em A Method of Reaching Extreme Altitudes}, volume 71(2) in {Smithsonian Miscellaneous Collections},
\newblock Smithsonian institution, City of Washington, 1919.

\bibitem{Goh66}
B.S. Goh,
\newblock {\em Necessary conditions for singular extremals involving multiple control variables,}
\newblock {J. SIAM Control}, 4 (1966), pp. 716--731.

\bibitem{GohThesis}
B.S. Goh, 
\newblock {\em Necessary Conditions for the Singular Extremals in the Calculus of Variations,}
\newblock Ph.D. Thesis, University of Canterbury, New Zealand, 1966.

\bibitem{Goh66a}
B.S. Goh,
\newblock {\em The second variation for the singular {B}olza problem,}
\newblock {J. SIAM Control}, 4(2) (1966), pp. 309--325.

\bibitem{Goh67}
B.S. Goh,
\newblock {\em Optimal singular control for multi-input linear systems,}
\newblock {J. Math. Anal. Appl.}, 20 (1967), pp. 534--539.

\bibitem{Goh95}
B.S. Goh,
\newblock {\em Compact forms of the generalized legendre-clebsch conditions and the computation of singular control trajectories,}
\newblock in {Proceedings of the American Control Conference}, volume~5, pp. 3410--3413, 1995.

\bibitem{Goh08}
B.S. Goh,
\newblock {\em Optimal singular rocket and aircraft trajectories,}
\newblock in {Proceedings of the Control and Decision Conference}, pp. 1531--1536, 2008.

\bibitem{Hes51}
M.R. Hestenes,
\newblock {\em Applications of the theory of quadratic forms in {H}ilbert space to the calculus of variations,}
\newblock {Pacific J. Math.}, 1(4) (1951), pp. 525--581.

\bibitem{Hul11}
D.G. Hull,
\newblock {\em Optimal guidance for quasi-planar lunar ascent,}
\newblock {J. Optim. Theory Appl.}, 151(2) (2011), pp. 353--372.

\bibitem{Kel64}
H.J. Kelley,
\newblock {\em A second variation test for singular extremals,}
\newblock {AIAA Journal}, 2 (1964), pp. 1380--1382.

\bibitem{KelKopMoy67}
H.J. Kelley, R.E. Kopp, and H.G. Moyer,
\newblock {\em Singular extremals,}
\newblock in {Topics in Optimization}, Academic Press, New York, 1967, pp. 63--101.

\bibitem{KunMeh06}
P.~Kunkel and V.~Mehrmann,
\newblock {\em Differential-algebraic equations: Analysis and numerical solution},
\newblock EMS Textbooks in Mathematics. European Mathematical Society (EMS), Z\"urich, 2006.

\bibitem{KurZow}
S.~Kurcyusz and J.~Zowe,
\newblock {\em Regularity and stability for the mathematical programming problem in {B}anach spaces,}
\newblock {Appl. Math. Optim.},
 5(1) (1979),  pp. 49--62.

\bibitem{Law63}
D.~F. Lawden,
\newblock {\em Optimal trajectories for space navigation},
\newblock Butterworths, London, 1963.

\bibitem{ManFro67}
O.~Mangasarian and S.~Fromovitz,
\newblock {\em The {F}ritz-{J}ohn necessary optimality conditions in the presence of  equality and inequality constraints,}
\newblock {J. Math. Anal. Appl.}, 7 (1967), pp. 37--47.

\bibitem{Mau76}
H.~Maurer,
\newblock {\em Numerical solution of singular control problems using multiple shooting techniques,}
\newblock {J. Optim. Theory Appl.}, 18(2) (1976), pp. 235--257.

\bibitem{MauGil75}
H.~Maurer and W.~Gillessen,
\newblock {\em Application of multiple shooting to the numerical solution of optimal control problems with bounded state variables,}
\newblock {Computing}, 15(2) (1975), pp. 105--126.

\bibitem{MauKimVos05}
H.~Maurer, J.-H. Kim, and G.~Vossen,
\newblock {\em On a state-constrained control problem in optimal production and maintenance,}
\newblock in {Optimal Control and Dynamic Games}, Christophe Deissenberg, Richard Hartl, H.~M. Amman, and B.~Rustem, ed., volume~7 of {Advances in Computational Management Science}, Springer US, 2005, pp. 289--308. 

\bibitem{MauOsm09}
H.~Maurer and N.P. Osmolovskii,
\newblock {\em Second order sufficient optimality conditions for a control problem with continuous and bang-bang control components: {R}iccati approach,}
\newblock in { System modeling and optimization}, volume 312 of { IFIP Adv. Inf. Commun. Technol.}, Springer, Berlin, 2009, pp. 411--429.

\bibitem{Mil81}
A.A. Milyutin,
\newblock {\em On quadratic conditions for an extremum in smooth problems with a finite-dimensional range,}
\newblock {Methods of the Theory of Extremal Problems in Economics}, 1981, pp. 138--177.

\bibitem{MilOsm98}
A.A. Milyutin and N.P. Osmolovskii,
\newblock {\em Calculus of variations and optimal control,}
\newblock {American Mathematical Society}, 1998.

\bibitem{OberleThesis}
H.J. Oberle,
\newblock {\em Numerische {B}ehandlung singul{\"a}rer {S}teuerungen mit der {M}ehrzielmethode am {B}eispiel der {K}limatisierung von {S}onnenh{\"a}usern,}
\newblock {Ph.D. thesis, {T}echnische {U}niversit{\"a}t {M}{\"u}nchen}, 1977.

\bibitem{Obe77}
H.J. Oberle,
\newblock {\em On the numerical computation of minimum-fuel, {E}arth-{M}ars  transfer,}
\newblock {J. Optim. Theory Appl.}, 22(3) (1977), pp. 447--453.

\bibitem{Obe79}
H.J. Oberle,
\newblock {\em Numerical computation of singular control problems with application to optimal heating and cooling by solar energy,}
\newblock {Appl. Math. Optim.}, 5(4) (1979), pp. 297--314.

\bibitem{Obe90}
H.J. Oberle,
\newblock {\em Numerical computation of singular control functions in trajectory optimization problems,}
\newblock {J. Guidance Control Dynam.}, 13(1) (1990), pp. 153--159.

\bibitem{ObeTau97}
H.J. Oberle and K.~Taubert,
\newblock {\em Existence and multiple solutions of the minimum-fuel orbit transfer problem,}
\newblock {J. Optim. Theory Appl.}, 95(2) (1997), pp. 243--262.

\bibitem{OsmThesis}
N.P. Osmolovskii,
\newblock {\em Theory of higher order conditions in optimal control},
\newblock Ph.D. Thesis, Moscow Civil Engineering Institute, 1988.

\bibitem{Pes94}
H.J. Pesch,
\newblock {\em A practical guide to the solution of real-life optimal control problems,}
\newblock {Control Cybernet.}, 23(1-2) (1994), pp. 7--60.


\bibitem{PBGM}
L.~Pontryagin, V.~Boltyanski, R.~Gamkrelidze, and E.~Michtchenk,
\newblock {\em The Mathematical Theory of Optimal Processes},
\newblock Wiley Interscience, New York, 1962.

\bibitem{Rob67}
H.M. Robbins,
\newblock {\em A generalized {L}egendre-{C}lebsch condition for the singular case of optimal control,}
\newblock {IBM J. of Research and Development}, 11 (1967), pp. 361--372.

\bibitem{Sha01}
A. Shapiro,
\newblock {\em On duality theory of conic linear problems,}
\newblock in {\em Semi-infinite programming ({A}licante, 1999)}, volume~57 of {Nonconvex Optim. Appl.}, Kluwer Acad. Publ., Dordrecht, 2001, pp. 135--165. 

\bibitem{Tre12}
E.~Tr{\'e}lat,
\newblock {\em Optimal {C}ontrol and {A}pplications to {A}erospace: {S}ome {R}esults and {C}hallenges,}
\newblock {J. Optim. Theory Appl.}, 154(3) (2012), pp. 713--758.

\end{thebibliography}

\begin{thebibliography}{10} 
\bibitem{bs} {\sc R.~A. Brualdi and B.~L. Shader}, 
{\em On sign-nonsingular matrices and the conversion of the
permanent into the determinant}, in Applied Geometry and
Discrete Mathematics, The Victor Klee Festschrift, P.
Gritzmann and B. Sturmfels, eds., American Mathematical
Society, Providence, RI, 1991, pp. 117--134.
 
\bibitem{djd} {\sc J. Drew, C.~R. Johnson, and P. van den Driessche}, 
{\em Strong forms of nonsingularity}, Linear Algebra Appl.,
162 (1992), to appear. 
 
\bibitem{g} {\sc P.~M. Gibson}, {\em Conversion of the permanent into the 
determinant}, Proc. Amer. Math. Soc., 27 (1971),
pp.~471--476.
 
\bibitem{klm} 
{\sc V.~Klee, R.~Ladner, and R.~Manber}, {\it
Signsolvability revisited}, Linear Algebra Appl., 59
(1984), pp.~131--157.
 
\bibitem{m} 
{\sc K. Murota}, LU-{\em decomposition of a matrix with
entries of different kinds}, Linear Algebra Appl., 49
(1983), pp.~275--283.

\bibitem{Axelsson}
{\sc O.~Axelsson}, {\em Conjugate gradient type methods for unsymmetric and
  inconsistent systems of linear equations}, Linear Algebra Appl., 29 (1980),
  pp.~1--16.

\bibitem{Brown-Saad1}
{\sc P.~N. Brown and Y.~Saad}, {\em Hybrid {K}rylov methods for nonlinear
  systems of equations}, SIAM J. Sci. Statist. Comput., 11 (1990), 
  pp.~450--481.

\bibitem{DES}
{\sc R.~S. Dembo, S.~C. Eisenstat, and T.~Steihaug}, {\em Inexact {N}ewton
  methods}, SIAM J. Numer. Anal., 19 (1982), pp.~400--408.

\bibitem{EES}
{\sc S.~C. Eisenstat, H.~C. Elman, and M.~H. Schultz}, {\em Variational
  iterative methods for nonsymmetric systems of linear equations}, SIAM J.
  Numer. Anal., 20 (1983), pp.~345--357.

\bibitem{Elman}
{\sc H.~C. Elman}, {\em Iterative methods for large, sparse, nonsymmetric
  systems of linear equations}, Ph.D. thesis, Department of Computer
  Science, Yale University, New Haven, CT, 1982.

\bibitem{GloKR85}
{\sc R.~Glowinski, H.~B. Keller, and L.~Rheinhart}, {\em Continuation-conjugate
  gradient methods for the least-squares solution of nonlinear boundary value
  problems}, SIAM J. Sci. Statist. Comput., 6 (1985), pp.~793--832.

\bibitem{Golub-VanLoan}
{\sc G.~H. Golub and C.~F. Van~Loan}, {\em Matrix Computations}, 
  Second ed., The Johns  Hopkins University Press, Baltimore, MD,  1989.

\bibitem{More}
{\sc J.~J. Mor\'e}, {\em A collection of nonlinear model problems}, in
  Computational Solutions of Nonlinear Systems of Equations, E.~L. Allgower and
  K.~Georg, eds., Lectures in Applied Mathematics, Vol. 26, American
  Mathematical Society, Providence, RI, 1990, pp.~723--762.

\bibitem{Saad}
{\sc Y.~Saad}, {\em Krylov subspace methods for solving large unsymmetric
  linear systems}, Math. Comp., 37 (1981), pp.~105--126.

\bibitem{Saad-Schultz}
{\sc Y.~Saad and M.~H. Schultz}, {\em {\rm GMRES}: A generalized minimal
  residual method for solving nonsymmetric linear systems}, SIAM J. Sci. Statist.
  Comput., 7 (1986), pp.~856--869.

\bibitem{Swarztrauber-Sweet}
{\sc P.~N. Swarztrauber and R.~A. Sweet}, {\em Efficient {\rm FORTRAN}
  subprograms for the solution of elliptic partial differential equations}, ACM
  Trans. Math. Software, 5 (1979), pp.~352--364.

\bibitem{Walker88}
{\sc H.~F. Walker}, {\em Implementation of the {\rm GMRES} method using
  {H}ouseholder transformations}, SIAM J. Sci. Statist. Comput., 9 (1988),
  pp.~152--163.

\bibitem{Walker89}
\sameauthor, {\em Implementations of
  the {\rm GMRES} method}, Computer Phys. Comm., 53 (1989),  pp.~311--320.

\end{thebibliography}
\end{document}
